\crefname{subsection}{subsection}{subsections}
\tikzset{every node/.style={circle}, 
	strike through/.append style={
		decoration={markings, mark=at position 0.5 with {
				\draw[-] ++ (-2pt,-2pt) -- (2pt,2pt);}
		},postaction={decorate}}
}
\definecolor{codegreen}{rgb}{0,0.6,0}
\definecolor{codegray}{rgb}{0.5,0.5,0.5}
\definecolor{codepurple}{rgb}{0.58,0,0.82}
\definecolor{backcolour}{rgb}{0.95,0.95,0.92}
\lstdefinestyle{mystyle}{
	backgroundcolor=\color{backcolour},   
	commentstyle=\color{codegreen},
	keywordstyle=\color{magenta},
	numberstyle=\tiny\color{codegray},
	stringstyle=\color{codepurple},
	basicstyle=\tiny,
	breakatwhitespace=false,         
	breaklines=true,                 
	captionpos=b,                    
	keepspaces=true,                 
	numbers=left,                    
	numbersep=5pt,                  
	showspaces=false,                
	showstringspaces=false,
	showtabs=false,                  
	tabsize=2
}
\newcommand{\pycode}[1]{\fbox{\color{red}{\texttt{#1}}}}
\def\ContinueLineNumber{\lstset{firstnumber=last}}
\newcommand{\me}{\mathrm{e}}
\newcommand{\R}[0]{\mathbb{R}}
\newcommand{\Rz}[0]{\mathbb{R^Z}}
\newcommand{\Rzl}[0]{\mathbb{R^{Z_<}}}
\newcommand{\Rzlc}[0]{\mathbb{R}_c^{Z_<}}
\newcommand{\einf}[0]{\mathfrak{e}}
\newcommand*{\underdownarrow}[1]{\ensuremath{\underset{\downarrow}{#1}}}
\newcommand{\1}{\text{\ding{172}}}
\newtheorem{theorem}{Theorem}[section]
\newtheorem{lemma}[theorem]{Lemma}
\newtheorem{proposition}[theorem]{Proposition}
\newtheorem{axiom}[theorem]{Axiom}
\newtheorem{example}[theorem]{Example}
\newtheorem{remark}[theorem]{Remark}
\newtheorem{definition}[theorem]{Definition}
\newtheorem{property}[theorem]{Property}
\newtheorem{conjecture}[theorem]{Conjecture}
\DeclarePairedDelimiter\abs{\lvert}{\rvert}%
\DeclarePairedDelimiter\norm{\lVert}{\rVert}%
\DeclareMathOperator{\dis}{\texttt{d}}
\DeclareMathOperator{\diss}{\texttt{d}_\texttt{$\psi$}}
\DeclareMathOperator{\sign}{sign}
\let\oldabs\abs
\def\abs{\@ifstar{\oldabs}{\oldabs*}}
\let\oldnorm\norm
\def\norm{\@ifstar{\oldnorm}{\oldnorm*}}
\begin{document}

%\title{Na\"ive Infinitesimal Analysis\thanks{Grants or other notes about the article that should go on the front page should be placed here. General acknowledgments should be placed at the end of the article.}}
%\title{Na\"ive Infinitesimal Analysis}
\title{%
	Na\"ive Infinitesimal Analysis\\
	\large Its Construction and Its Properties}

%\author{Anggha Nugraha \\ \href{mailto:anggha.satya@gmail.com}{\texttt{anggha.satya@gmail.com}} 
%	\and Maarten McKubre-Jordens \\  \href{mailto:maarten.jordens@gmail.com}{\texttt{maarten.jordens@gmail.com}}
%	\and Hannes Diener \\  \href{mailto:hannes.diener@canterbury.ac.nz}{\texttt{hannes.diener@canterbury.ac.nz}}
%}

%\author{First \\ \href{mailto:first@domain.com}{\texttt{first@domain.com}} 
%	\and Second \\  \href{mailto:second@doman.com}{\texttt{second@domain.com}}
%	\and Third \\  \href{mailto:third@domain.com}{\texttt{third@domain.com}}
%}

\author{Anggha Nugraha\thanks{\href{mailto:anggha.satya@gmail.com}{\texttt{anggha.satya@gmail.com}}} \\
	%	\href{mailto:anggha.satya@gmail.com}{\texttt{anggha.satya@gmail.com}} 
	\and Maarten McKubre-Jordens
	%	 \href{mailto:maarten.jordens@gmail.com}{\texttt{maarten.jordens@gmail.com}}
	\and Hannes Diener \\
	%	\href{mailto:hannes.diener@canterbury.ac.nz}{\texttt{hannes.diener@canterbury.ac.nz}}
}

%\authorrunning{Short form of author list} % if too long for running head

\date{}
% The correct dates will be entered by the editor

\maketitle

\begin{abstract}
This paper aims to build a new understanding of the nonstandard mathematical analysis. The main contribution of this paper is the construction of a new set of numbers, $\Rzl$, which includes infinities and infinitesimals. The construction of this new set is done na\"ively in the sense that it does not require any heavy mathematical machinery, and so it will be much less problematic in a long term. Despite its na\"ivety character, the set $\Rzl$ is still a robust and rewarding set to work in. We further develop some analysis and topological properties of it, where not only we recover most of the basic theories that we have classically, but we also introduce some new enthralling notions in them. The computability issue of this set is also explored. The works presented here can be seen as a contribution to bridge constructive analysis and nonstandard analysis, which has been extensively (and intensively) discussed in the past few years.
%\keywords{\hl{Nonstandard} analysis \and Paraconsistent reasoning \and Infinitesimals}

\end{abstract}

%********************************** Sections  **************************************

\section{Background and Aim}
\label{sec:BackgroundandAim}
There have been many attempts to rule out the existence of inconsistencies in mathematical and scientific theories. Since the 1930s, we have known (from G\"{o}del's results) that it is impossible to prove the consistency of any interesting system (in our case, this is a system capable of dealing with arithmetic and analysis). One of the famous examples of inconsistency is as follows. Suppose we have a function $f\left( x\right) =ax^{2}+bx+c$ and want to find its first derivative. By using Newton's `definition of the derivative':

\begin{align}
  f'\left(x\right)  & =\frac{f\left( x+h\right) -f\left( x\right) }{h} \nonumber\\
  & =\frac{a\left( x+h\right) ^{2}+b\left( x+h\right) +c-ax^{2}-bx-c}{h}\nonumber \\
  & = \frac{ax^2+2axh+2h^2+bx+bh+c-ax^2-bx-c}{h}\nonumber\\
  & = \frac{2axh+2h^2+bh}{h}\nonumber\\
  & =2ax+h+b \label{eq1.1}\\
  & =2ax+b\label{eq1.2}
\end{align}

In the example above, the inconsistency is located in treating the variable $h$ (some researchers speak of it as an infinitesimal). It is known from the definition that $h$ is a small but non-trivial neighbourhood around $x$ and, because it is used as a divisor, cannot be zero. However, the fact that it is simply omitted at the end of the process (from Equation \ref{eq1.1} to Equation \ref{eq1.2}) indicates that it was, essentially, zero after all. Hence, we have an inconsistency.

This problem of inconsistency has been `resolved' in the $19^{\textnormal{th}}$ century\footnote{If we look historically, the debates of the use of infinitesimals have a long and vivid history. Their early appearance in mathematics was from the Greek atomist philosopher Democritus (around 450 B.C.E.), only to be dispelled by Eudoxus (a mathematician around 350 B.C.E.) in what was to become ``Euclidean'' mathematics.} by the concept of limit, but its (intuitive) na\"ive use is still common nowadays, e.g. in physics \cite{susskind2014theoretical}. In spite of that, interesting and correct results are still obtained. This outlines how firmly inconsistent infinitesimal reasoning (which is a reasoning with \textit{prima facie} inconsistent infinitesimals) is entrenched in our scientific community and it means that inconsistency is something that, if unavoidable, should be handled appropriately. Actually, inconsistency would not have been such a problem if the logic used was not explosive \cite{weber2009inconsistent}. The problem is that our mathematical theory is mostly based on classical logic, which is explosive. Thus, one promising solution is to change the logic into a non-explosive one and this is the main reason for the birth of paraconsistent mathematics which uses paraconsistent logic as its base.

Recent advances in paraconsistent mathematics have been built on developments in set theory \cite{weber2010extensionality}, geometry \cite{mortensen2010inconsistent}, arithmetic \cite{mortensen1995inconsistent}, and also the elementary research at calculus \cite{mortensen1995inconsistent} and \cite{brown2004chunk}. A first thorough study to apply paraconsistent logic in real analysis was based on the early work, such as \cite{da1974theory} and \cite{mortensen1990models}. While Rosinger in \cite{rosinger:hal-00552058} and \cite{rosinger2008safe} tried to elaborate the basic structure and use of inconsistent mathematics, McKubre-Jordens and Weber in \cite{mckubre2012real} analysed an axiomatic approach to the real line using paraconsistent logic. They succeed to show that basic field and also compactness theorems hold in that approach. They can also specify where the consistency requirement is necessary. These preliminary works in \cite{mckubre2012real} and \cite{weber2017paraconsistent} show how successful a paraconsistent setting to analysis can be. On the side of the non-standard analysis, it can be seen for example in \cite{arkeryd2012nonstandard} and \cite{fletcher2017approaches} that it is still well-studied and still used in many areas.

The underlying ideas of the research described on this paper are as follows. We have two languages: $\mathfrak{L}$, the language of real numbers $\mathbb{R}$, and $\mathfrak{L^*}$, the language of the set of hyperreal numbers $\mathbb{^*R}$. The language $\mathfrak{L^*}$ is an extension of the language $\mathfrak{L}$. It can be shown that each of those two sets forms a model for the formulas in its respective language.

Speaking about the hyperreals, the basic idea of this system is to extend the set $\mathbb{R}$ to include infinitesimal and infinite numbers without changing any of the elementary axioms of algebra. Transfer principle holds an important role in the formation of the set $\mathbb{^*R}$ in showing what are still preserved in spite of this extension. However, there are some problems with the transfer principle, notably its non-computability (see \Cref{subsec:TransferPrincipleandItsProblems}). To avoid its use, one can logically think to simply collapsing the two languages into one language $\mathfrak{\widehat{L}}$ which corresponds to the set $\mathbb{\widehat{R}}$, a new set of numbers constructed by combining the two set of axioms of $\mathbb{R}$ and $\mathbb{^*R}$. This is what we do here (see \Cref{sec:TheCreationofTheNewSets}). Nevertheless, there is at least one big problem from this idea: a contradiction.

We can at present consider two possible ways of resolving this contradiction. The first way is to change the base logic into paraconsistent logic. There are many paraconsistent logics that are available at the moment. This could be a good thing, or from another perspective, be an additional difficulty as we need to choose wisely which paraconsistent logic we want to use at first, i.e. which one is the most appropriate or the best for our purpose. But then, to be able to do this, we need to know beforehand which criteria to use and this, in itself, is still an open question.

The second way we could consider is to have a subsystem in our theory. This idea arose from a specific reasoning strategy, Chunk \& Permeate, which was introduced by Brown and Priest in 2003 \cite{brown2004chunk}. Using this strategy, we divide our set $\mathbb{\widehat{R}}$ into some consistent chunks and build some permeability relations between them. This process leads us to the creation of the sets $\Rz$ and $\Rzl$. In our view, this idea is more sensible and promises to be more useful than the first. Moreover, after further analysing this idea, we produce some new interesting and useful notions that will be worth to explore even further (see Sections \ref{sec:TopologyOnRZ<}-\ref{sec:ComputabilityInRZ<}).

The aim of this paper is to build a new model of the nonstandard analysis. By having the new set produces in this paper, we would have real numbers, infinities, and infinitesimals in one set and would still be able to do our ``usual'' analysis in, and with this set. Moreover, in terms of G\"odel's second incompleteness theorem, if we can build a new structure for nonstandard mathematical analysis which is resilient to contradiction, we would open the door to having not just a sound, but a complete mathematical theory. To put it simply, like Weber said, `In light of G\"odel's result, an inconsistent foundation for mathematics is the only remaining candidate for completeness' \cite{weber2009inconsistent}.

\section{Some Preliminaries}
\label{sec:SomePreliminaries}

To understand and carry our investigation, it is essential to have an accurate grasp of the received view about formal language, the reals and hyperreals, and the paraconsistent logic. In turn, to analyse these matters, it will be useful to fix some terminologies.

\subsection{Reals, Hyperreals, and Their Respective Languages}

Formal language is built by its syntax and semantics. Here are the symbols that are used in our language:

\begin{center}
\begin{tabular}{lll}
variables & : & $a$ $b$ $c$ $\dots$ $x_1$ $x_2$ $\dots$\\
grammatical signs & : & ( ) ,\\
connectives & : & $\land$ $\lor$ $\neg$ $\rightarrow$\\
quantifiers & : & $\forall$ $\exists$\\
constant symbols & : & $1$ $-2.5$ $\pi$ $\sqrt{5}$ $\dots$\\
function symbols & : & $+$ $-$ $\sin$ $\tan$ $\dots$\\
relation symbols & : & $=$ $<$ $>$ $\leq$ $\geq$ $\dots$\\
\end{tabular}
\end{center}

Like in natural language, a sentence is built by its term. Terms and sentences are defined as usual. \Cref{ex:LanguageforZ} uses the simple language $\mathfrak{I}$ to build some statements about integer numbers, $\mathbb{Z}$.

\begin{example}
\label{ex:LanguageforZ}
In addition to our usual connectives, variables, quantifiers and grammatical symbols in $\mathbb{Z}$, $\mathfrak{I}$ also contains:
\begin{center}
\begin{tabular}{lll}
constant symbols &: & $\dots,-2,-1,0,1,2,\dots$\\
function symbols &: & $q(x)=x^2$\\
& & $\text{add}(x,y)=x+y$\\
& & $\text{mul}(x,y)=x\times y$\\
relation symbols &: &$P(x)\textnormal{ for ``}x\textnormal{ is positive''}$\\ 
& & $E(x,y)\textnormal{ for ``}x\textnormal{ and }y\textnormal{ are equal''}$
\end{tabular}
\end{center}
In this language $\mathfrak{I}$, one can translate an English statement `squaring any integer number will give a positive number' as $\forall x$ $P(s(x))$.
\end{example}

Now we can define a language for the set hyperreals. We define a language $\mathfrak{L}$ whose every sentence, if true in reals, is also true in hyperreals.

\begin{definition}[Language $\mathfrak{L}$]
\label{def:LanguageL}
The language $\mathfrak{L}$ consists of the usual defined variables, connectives, and grammatical signs in $\mathbb{R}$, and the following:
\begin{center}
\begin{tabular}{llp{7cm}}
constant symbols &: one symbol for every real number\\
function symbols &: one symbol for every real-valued function of any finite\\ 
&\hspace{0.35em} number real variables\\
relation symbols &: one symbol for every relation on real numbers of any finite\\
&\hspace{0.35em} number real variables\\
\end{tabular}
\end{center}
\end{definition}

Semantics in our language is described by its model. This model gives an interpretation of the sentences of the language, such that we may know whether they are true or false in that model.

\begin{definition}[Model of a Language]
\label{def:ModelofaLanguage}
Suppose that we have a language $\mathfrak{A}$. A model for $\mathfrak{A}$ consists of:
\begin{enumerate}
\item a set $A$ so that each constant symbol in $\mathfrak{A}$ corresponds to an element of $A$,
\item a set $F$ of functions on $A$ so that each function symbol in $\mathfrak{A}$ corresponds to a function in $F$,
\item a set $R$ of relations on $A$ so that each relation symbol in $\mathfrak{A}$ corresponds to a relation in $R$.
\end{enumerate}
\end{definition}

\begin{example}
For our language $\mathfrak{I}$ over integer numbers, its model is the set $A=\mathbb{Z}$ with several functions and relations already well-defined in $\mathbb{Z}$.
\end{example}

\begin{restatable}[Reals as a Model]{theorem}{}
\label{thm:RealsasaModel}
The real number system $\mathbb{R}$ is a model for the language $\mathfrak{L}$.
\end{restatable}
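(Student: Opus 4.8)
The plan is to verify the three clauses of \Cref{def:ModelofaLanguage} directly, taking the carrier set $A$ to be $\mathbb{R}$ itself and reading off the interpretations of the symbols from the way \Cref{def:LanguageL} was set up. The key observation is that the language $\mathfrak{L}$ was deliberately defined so that its non-logical vocabulary is in bijective correspondence with the semantic objects over $\mathbb{R}$: one constant symbol per real number, one function symbol per real-valued function of finitely many real arguments, and one relation symbol per relation on $\mathbb{R}$ of finite arity. So the proof is essentially a matter of exhibiting the canonical interpretation and checking it meets the definition.

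First I would fix $A = \mathbb{R}$, so that clause (1) is satisfied by sending each constant symbol to the unique real number it was introduced to name. Next, for clause (2), I would let $F$ be the set of all real-valued functions on $\mathbb{R}^n$ for every finite $n$, and map each function symbol of $\mathfrak{L}$ to the corresponding function in $F$; by construction of $\mathfrak{L}$ this correspondence is onto (indeed a bijection), so every function symbol gets an interpretation and clause (2) holds. For clause (3) I would do the same with $R$ taken to be the set of all finitary relations on $\mathbb{R}$, assigning to each relation symbol the relation it denotes. One should also note explicitly that the logical apparatus — variables, connectives $\land,\lor,\neg,\rightarrow$, quantifiers $\forall,\exists$, and the grammatical signs — receives its standard Tarskian interpretation ranging over $A=\mathbb{R}$, so that terms and sentences of $\mathfrak{L}$ acquire truth values in the usual recursive way.

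I do not expect a genuine obstacle here: the statement is true essentially by the design of \Cref{def:LanguageL}, and the ``proof'' is really an unpacking of definitions. The only point requiring any care is to make sure the three assignments are \emph{well-defined} — i.e. that the phrase ``one symbol for every real number / function / relation'' is read as giving a function from symbols to semantic objects, not merely a surjection of semantic objects onto symbols — and to remark that nothing in \Cref{def:ModelofaLanguage} demands minimality or faithfulness, only that each symbol receive \emph{some} interpretation of the right type. Once that is observed, all three clauses are immediate, and I would close by noting that this canonical model is exactly the one tacitly used whenever we speak of a sentence of $\mathfrak{L}$ being ``true in the reals,'' which is the reading needed for the subsequent development of $\mathfrak{L}^*$ and $\mathbb{{}^*\mathbb{R}}$.
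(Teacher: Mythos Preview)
Your proposal is correct and follows essentially the same approach as the paper: set $A=\mathbb{R}$ and take $F$ and $R$ to be the collections of all real-valued functions and relations on $\mathbb{R}$, which by the design of \Cref{def:LanguageL} gives the required correspondences. The paper's proof is simply a one-line version of what you have written out in detail.
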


\begin{proof}
Take $A=\mathbb{R}$ and $F$ and $R$ as set of all functions and relations, respectively, which are already well-defined in $\mathbb{R}$.
\end{proof}

Then, by using the definition of a model, we defined what hyperreal number system is.

\begin{definition}
A hypereal number system is a model for the language $\mathfrak{L}$ that, in addition to all real numbers, contains infinitesimal and infinite numbers.
\end{definition}

Now suppose that $\mathbb{^*R}$ is the set of all hyperreal numbers. Our goal now is to show that $\mathbb{^*R}$ is a model for the language $\mathfrak{L}$. To show this, we need to extend the definition of relations and functions on $\mathbb{R}$ into $\mathbb{^*R}$. This extension can also be seen in \cite{goldblatt1998lectures}.

\begin{definition}[Extended Relation]
\label{def:ExtendedRelation}
Let $R$ be a $k$-variable relation on $\mathbb{R}$, i.e. for every $x_1,x_2,\dots,x_k$, $R(x_1,x_2,\dots,x_k)$ is a sentence that is either true or false. The extension of $R$ to $\mathbb{^*R}$ is denoted by $^*R$. Suppose that $\mathbf{x}_1,\mathbf{x}_2,\dots,\mathbf{x}_k$ are any hyperreal numbers whose form is $\{x_{1n}\},\{x_{2n}\},\dots,\{x_{kn}\}$, respectively. We define $^*R(\mathbf{x_1},\mathbf{x_2},\dots,\mathbf{x_k})$ as true iff
\begin{center}
$\{n|R(x_{1n},x_{2n},\dots,x_{kn})$ is true in $\mathbb{R}\}$
\end{center}
is big.\footnote{A `big set' is a set of natural numbers so large that it includes all natural numbers with the possible exception of finitely many \cite{goldblatt1998lectures}.} Otherwise, $R(\mathbf{x_1},\mathbf{x_2},\dots,\mathbf{x_k})$ is false.
\end{definition}

\begin{example}
Suppose that
\begin{center}
$\textcircled{z} = \{1,2,3,4,5,...\} $
\end{center}
By taking $k=1$ in \Cref{def:ExtendedRelation}, we are able to have a relation $I(x)=$``$x$ is an integer''. The relation $I(\textcircled{z})$ is true. This is because the set of indexes where relation $I(x)$ is tru, is a big set. Thus, we conclude that \textcircled{z} is actually a hyperinteger.
\end{example}

\begin{definition}[Extended Function]
\label{def:ExtendedFunction}
Let $f$ be an $k$-variables function on $\mathbb{R}$. The extension of $f$ to $\mathbb{^*R}$ is denoted by $^*f$. Suppose that $\mathbf{x}_1,\mathbf{x}_2,\dots,\mathbf{x}_k$ are any hyperreal numbers whose form is $\{x_{1n}\},\{x_{2n}\},\dots,\{x_{kn}\}$, respectively. We define $^*f(\mathbf{x}_1,\mathbf{x}_2,\dots,\mathbf{x}_k)$ by
\begin{center}
$^*f(\mathbf{x}_1,\mathbf{x}_2,\dots,\mathbf{x}_k)=
\{f(x_{11},x_{21},\dots,x_{k1}),f(x_{12},x_{22},\dots,x_{k2}),f(x_{13},x_{23},\dots,x_{k3}),\dots\}$
\end{center}
\end{definition}

\begin{example}
Suppose that
\begin{center}
$\textcircled{e} = \{2,4,6,8,...\} $
\end{center}
By taking $k=1$ in \Cref{def:ExtendedFunction}, we might have, for example, a well-defined hypersinus function:
\begin{center}
$\sin(\textcircled{e})=\{\sin(2),\sin(4),\sin(6),\sin(8),...\}$
\end{center} 
\end{example}

%there should be a theorem to state $^*R$ and $^*f$ are well-defined.

\begin{restatable}[Hyperreals as a Model]{theorem}{}
\label{thm:HyperrealsasaModel}
The set $\mathbb{^*R}$ is a model for the language $\mathfrak{L}$ that contains infinitesimals and infinities.
\end{restatable}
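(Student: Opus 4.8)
The plan is to verify the three clauses of \Cref{def:ModelofaLanguage} with the carrier set taken to be $\mathbb{^*R}$, and then to exhibit explicit infinitesimal and infinite elements. For the constant clause I would send the constant symbol naming a real number $r$ to the hyperreal represented by the constant sequence $\{r,r,r,\dots\}$; this is the standard copy of $\mathbb{R}$ inside $\mathbb{^*R}$, and it is injective because two distinct constant sequences agree on the empty set of indices, which is not big. For the function and relation clauses I would take $F=\{\,{}^{*}f : f \text{ a real-valued function of finitely many real variables}\,\}$ and $R=\{\,{}^{*}S : S \text{ a relation on } \mathbb{R} \text{ of finitely many variables}\,\}$, assigning to each function symbol of $\mathfrak{L}$ the extension ${}^{*}f$ of \Cref{def:ExtendedFunction} and to each relation symbol the extension ${}^{*}S$ of \Cref{def:ExtendedRelation}. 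Every symbol of $\mathfrak{L}$ then receives a designated interpretation in $(\mathbb{^*R},F,R)$, which is exactly what \Cref{def:ModelofaLanguage} requires; as a sanity check one observes that on the embedded copy of $\mathbb{R}$ these interpretations restrict to the original $f$ and $S$, which is immediate from using constant-sequence representatives.

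The only point that needs an argument rather than a choice is that these interpretations are \emph{well defined}: neither the hyperreal ${}^{*}f(\mathbf{x}_1,\dots,\mathbf{x}_k)$ nor the truth value assigned to ${}^{*}S(\mathbf{x}_1,\dots,\mathbf{x}_k)$ may depend on the sequences chosen to represent $\mathbf{x}_1,\dots,\mathbf{x}_k$. Here I would invoke the structural properties of the family of big sets: it contains every cofinite subset of $\mathbb{N}$, it is closed upward under inclusion, and it is closed under finite intersection. Given representatives $\{x_{in}\}$ and $\{y_{in}\}$ of each $\mathbf{x}_i$, every agreement set $\{n : x_{in}=y_{in}\}$ is big, so the intersection $D$ over the finitely many coordinates is big; on $D$ the two output sequences of $f$ coincide, hence represent the same hyperreal, and the index sets witnessing $S$ for the two choices of representatives differ only within the complement of $D$, so one is big precisely when the other is. This makes both interpretations unambiguous.

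Finally I would exhibit the required infinitesimals and infinities, which at the same time shows the inclusion $\mathbb{R}\subsetneq\mathbb{^*R}$ is proper. Let $\varepsilon$ be represented by $\{1,\tfrac12,\tfrac13,\dots\}$: for each positive real $r$ the set $\{n:\tfrac1n<r\}$ is cofinite and hence big, while $\{n:\tfrac1n=0\}=\emptyset$ is not big, so $0<\varepsilon<r$ for every positive real $r$, i.e.\ $\varepsilon$ is a genuine infinitesimal. Dually, let $\omega$ be represented by $\{1,2,3,\dots\}$: for each real $r$ the set $\{n:n>r\}$ is cofinite, hence big, so $\omega$ exceeds every real and is therefore infinite. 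Combined with the previous two paragraphs this proves that $\mathbb{^*R}$ is a model of $\mathfrak{L}$ containing infinitesimals and infinities. The argument is essentially bookkeeping and I do not expect a serious obstacle; the one point I would be careful to flag (although it does not affect this particular statement) is that ``big'', in the sense of \cite{goldblatt1998lectures}, should mean ``belonging to a fixed nonprincipal ultrafilter on $\mathbb{N}$'', which properly extends the ``all but finitely many'' family of the footnote, and it is this stronger reading that the later order- and field-theoretic results will rely on.
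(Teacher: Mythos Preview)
Your proposal is correct and follows the same approach as the paper: take $A=\mathbb{^*R}$ and interpret the function and relation symbols via the extensions ${}^{*}f$ and ${}^{*}R$ of \Cref{def:ExtendedFunction} and \Cref{def:ExtendedRelation}. The paper's own proof is a single sentence that simply cites those definitions; you have additionally spelled out the constant-symbol interpretation via constant sequences, argued well-definedness with respect to choice of representatives, and exhibited explicit infinitesimal and infinite elements --- all of which the paper leaves implicit. Your closing caveat about the intended ultrafilter reading of ``big'' is also well taken, though the paper does not raise it at this point.
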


\begin{proof}
Take $A=\mathbb{^*R}$ in \Cref{def:ModelofaLanguage} with all of the functions $^*f$ defined in \Cref{def:ExtendedFunction} and relations $^*R$ defined in \Cref{def:ExtendedRelation}.
\end{proof}

\subsection{Transfer Principle and Its Problems}
\label{subsec:TransferPrincipleandItsProblems}

\begin{definition}[Transfer Principle]
\label{def:TransferPrinciple}
Let $S$ be a sentence in $\mathfrak{L}$. The transfer principle says that:
\begin{center}
$S$ is true in the model $\mathbb{R}$ for $\mathfrak{L}$ iff $S$ is true in the model $\mathbb{^*R}$ for $\mathfrak{L}$.
\end{center}
\end{definition}

\noindent As Goldblatt said in \cite{goldblatt1998lectures}, `The strength of nonstandard analysis lies in the ability to transfer properties between $\mathbb{R}$ and $\mathbb{^*R}$.' But, there are some serious problems with the transfer principle. Some of them are: it is non-computable in the sense of there is no good computable representation of the hyperreals to start with; it really depends intrinsically on the mathematical model or language we use; we are prone to get things wrong when not handled correctly (especially because of human error). 

Furthermore, there is an all too often overlooked, yet major deficiency with the transfer principle: it performs particularly poorly upon a rather simple ``cost-return'' analysis. Namely, on the one hand, the mathematical machinery which must be set up in advance in order to use the transfer principle is of such a considerable technical complication and strangeness with a lack of step-by-step intuitive insight that, ever since 1966, when Abraham Robinson published the first major book on the subject --- that is, for more than half a century by now --- none of the more major mathematicians ever chose to switch to the effective daily use of nonstandard analysis, except for very few among those who have dealt with time continuous stochastic processes, and decided to use the ``Loeb Integral'' introduced in 1975. On the other hand, relatively few properties of importance can ever be transferred, since they are not --- and cannot be, within usual nonstandard analysis --- formulated in terms of first order logic.

One possible solution for overcoming (some of) these problems is simply by not using the transfer principle, by throwing together the two sets $\R$ and $^*\R$, i,e. combining their languages and axioms. However, \Cref{ex:ExampleContradictionwhenJoiningLanguages} shows that if we just simply combine the two languages, it will pose one big problem of contradictions which can lead to absurdity if we use classical logic. That is the reason why we use paraconsistent logic as it is resilient against local contradiction.

\begin{example}
\label{ex:ExampleContradictionwhenJoiningLanguages}
Take the well-ordering principle for our example. This principle says that: ``every non-empty set of natural numbers contains a least element''. Call a set $S=\{x\in N^*: x \textnormal{ is infinite}\}$. Let $s$ be its least element. Note that $s$ is infinite and so $s-1$. Thus, $s-1\in S$ and it makes $s$ is not the least element. Therefore, there exists $l$ such that $l$ is the least element of $S$ and there is no $l$ such that $l$ is the east element of $S$.
\end{example}

In addition to the problems with the transfer principle, there are also some downsides of the construction of the set $\mathbb{^*R}$ itself. Indeed, the usual construction of the hyperreal set $\R$ involves an ultrafilter on $\mathbb{N}$, the existence of which is justified by appealing to the full Axiom of Choice whose validity is still a great deal to discuss \cite{tao2012acheap,kanovei2013nonstandard}. Moreover, it also relies on some heavy and non-constructive mathematical machineries such as Zorn's lemma, the Hahn-Banach theorem, Tychonoff's theorem, the Stone-Cech compactification, or the boolean prime ideal theorem. On the side of the nonstandard analysis itself, there are some critiques as can be seen in \cite{bishop1977h,earman1975infinities,edwards2007euler,gray2015real,sergeyev2015olympic,spalt2002cauchy}. Most of them are related with its non-constructivism and its difficulties to be used in class teaching. This problem can be solved by building a na\"\i ve constructive non-standard set and making sure that it is still a useful set by redefining some well-known notions in there.

\subsection{Paraconsistent Logics in Mathematics}

Generally, paraconsistent logics are logics which permit inference from inconsistent information in a non-trivial fashion \cite{priest2002paraconsistent}. Paraconsistent logics are characterized by rejecting the universal validity of the principle \textit{ex contradictione quodlibet} (ECQ) which is defined below.

\begin{definition}[ECQ Principle]
\label{def:ECQPrinciple}
The principle of explosion, ECQ, is the law which states that any statement can be proven from a contradiction.
\end{definition}

\noindent By admitting the ECQ principle in one theory, if that theory contains a single inconsistency, it becomes absurd or trivial. This is something that, in paraconsistent logics, does not follow necessarily.

Paraconsistent logicians believe that some contradictions does not necessarily make the theory absurd. It just means that one has to be very careful when doing deductions so as to avoid falling from contradiction into an absurdity. In other words, classical and paraconsistent logic treat contradiction in different ways. The former treats contradiction as a global contradiction (making the theory absurd), while the latter treats some contradictions as a local contradiction. In other words, classical logic cannot recognise if there is an interesting structure in the event of a contradiction.

\begin{definition}[Paraconsistent Logic]
\label{def:ParaconsistentLogic}
Suppose that $A$ is a logical statement. A logic is called paraconsistent logic iff
\begin{center}
$\exists A,B$ such that $A\land \lnot A\nvdash B$.
\end{center}
\end{definition}

\noindent The symbol $\Gamma \vdash A$ simply means that there exists a proof of $A$ from set of formulas $\Gamma$, in a certain logic.

There are at least two different approaches to paraconsistent logics. The first is by adding another possible value, both true and false, to classical truth values while the second one is called the \textit{relevant-approach}. The idea of the relevant-approach is simply to make sure that the conclusion of an implication must be \textbf{relevant} to its premise(s). Those two paraconsistent logics, respectively, are Priest's Paraconsistent Logic \textbf{LP{$^\supset$}} and Relevant Logic \textbf{R}.\footnote{Note that in general, relevant logic differs from paraconsistent logic. When someone claims that they use relevant logic, it implies that they use paraconsistent logic, but not vice versa. Using paraconsistent logic does not necessarily mean using relevant logic, e.g. the logic $LP^\supset$ below is not relevant logic.} More explanations on each of them can be seen, for example, in \cite{priest1979logic}, \cite{avron1991natural}, \cite{priest1991minimally}, \cite{mares2014relevance}, and \cite{dunn2002relevance}.

When we are applying paraconsistent logic to a certain theory, there will be at least two terms that we have to be aware of: \textit{inconsistency} and \textit{incoherence}. The first term, inconsistency, is applicable if there occurs a contradiction in a system. Meanwhile, the second term, incoherence, is intended for a system which proves anything (desired or not). In classical logic, there is no difference between these two terms because of the ECQ principle. Thus, if a contradiction arises inside a theory, anything that the author would like to say can be proved or inferred within that theory. This is something that likewise does not have to happen in paraconsistent logic.

In mathematical theory, foundation of mathematics is the study of the basic mathematical concepts and how they form more complex structures and concepts. This study is especially important for learning the structures that form the language of mathematics (formulas, theories, definitions, etc.), structures that often called metamathematical concepts. A philosophical dimension is hence central to this study. One of the most interesting topics in the foundation of mathematics is the foundation of real structure, or analysis.

Generally, it is known that the construction of real numbers is categorised in classical logic --- while there is an advancement in paraconsistent logic such as in \cite{batens2007frontiers}, this has not yet been extensively explored. However, it seems viable to make a further study of real structure by developing paraconsistent foundations of analysis.

\section{The Creation of The New Sets}
\label{sec:TheCreationofTheNewSets}

As noted before, the transfer principle is useful as well as fairly problematic at the same time. One way to avoid the unnecessary complications of the transfer principle is by collapsing the two languages involved into one language $\mathfrak{\widehat{L}}$. Simply collapsing the two, however, causes additional problems. One of the problems that can be expected to appear is contradiction, but we can use a paraconsistent logic to handle this when it arises.

In this section, we  construct a new number system $\mathbb{\widehat{R}}$ through its axiomatisation by `throwing' the axioms of $\R$ and $\mathbb{^*R}$ together. The set $\mathbb{\widehat{R}}$ is the number system on which the language $\mathfrak{\widehat{L}}$ will be based and it contains positive and negative infinities, and also infinitesimals.\footnote{We are not necessarily expecting the resulting system to have contradictions, but we will make sure that we maintain coherency by not allowing contradictions to become an absurdity.} It does make sense to insert infinities (and their reciprocals, infinitesimals) into $\mathbb{\widehat{R}}$ as some of the contradictions in mathematics come from their existence and also because they are still used in today's theory as can be seen in \cite{susskind2014theoretical}. 

\subsection{The New Set $\mathbb{\widehat{R}}$}

For the sake of clarity, Axioms \ref{ax:AdditivePropertyOfRHat}--\ref{ax:ArchimedeanPropertyOfRHat} give the axiomatisation of the number system $\mathbb{\widehat{R}}$.

\begin{axiom}[Additive Property of $\mathbb{\widehat{R}}$]
	\label{ax:AdditivePropertyOfRHat}
	In the set $\mathbb{\widehat{R}}$, there is an operator $+$ that satisfies:\\
	\begin{tabular}{lp{10cm}}
		A1: & For any $x,y\in\mathbb{\widehat{R}}, x+y\in\mathbb{\widehat{R}}$.\\
		A2: & For any $x,y\in\mathbb{\widehat{R}}, x+y=y+x$.\\
		A3: & For any $x,y,z\in\mathbb{\widehat{R}}, (x+y)+z=x+(y+z)$.\\
		A4: & There is $0\in\mathbb{\widehat{R}}$ such that $x+0=x$ for all $x\in\mathbb{\widehat{R}}$.\\
		A5: & For each $x\in\mathbb{\widehat{R}}$, there is $-x\in\mathbb{\widehat{R}}$ such that $x+(-x)=0$.\\
	\end{tabular}
\end{axiom}

\begin{axiom}[Multiplicative Property of $\mathbb{\widehat{R}}$]
	\label{ax:MultiplicativePropertyOfRHat}
	In the set $\mathbb{\widehat{R}}$, there is an operator $\cdot$ that satisfies:\\
	\begin{tabular}{lp{11cm}}
		M1: & For any $x,y\in\mathbb{\widehat{R}}, x\cdot y\in\mathbb{\widehat{R}}$.\\
		M2: & For any $x,y\in\mathbb{\widehat{R}}, x\cdot y=y\cdot x$.\\
		M3: & For any $x,y,z\in\mathbb{\widehat{R}}, (x\cdot y)\cdot z=x\cdot(y\cdot z)$.\\
		M4: & There is $1\in\mathbb{\widehat{R}}$ such that $(1=0)\rightarrow\bot$ and $\forall x\in\mathbb{\widehat{R}}$ $x\cdot 1=x$.\\
		%	M5: & For each $x\in\mathbb{\widehat{R}}$, if $(x=0)\rightarrow\bot$, then there is $y\in\mathbb{\widehat{R}}$ such that $\forall n\in\mathbb{N}$, $\abs{xy-1}<\frac{1}{n}$.\\
		M5: & For each $x\in\mathbb{\widehat{R}}$, if $(x=0)\rightarrow\bot$, then there is $y\in\mathbb{\widehat{R}}$ such that $x\cdot y=1$.\\  
	\end{tabular}
\end{axiom} 

\begin{axiom}[Distributive Property of $\mathbb{\widehat{R}}$]
	\label{ax:DistributivePropertyOfRHat}
	For all $x,y,z\in\mathbb{\widehat{R}}$, $x\cdot(y+z)=(x\cdot y)+(x\cdot z)$.
\end{axiom}

\begin{axiom}[Total Partial Order Property of $\mathbb{\widehat{R}}$]
	\label{ax:TotalOrderPropertyOfRHat}
	There is a relation $\leq$ in $\mathbb{\widehat{R}}$, such that for each $x,y,z\in\mathbb{\widehat{R}}$:\\
	\begin{tabular}{lp{9.7cm}}
		O1: & Reflexivity: $x\leq x$,\\
		O2: & Transitivity: $(x\leq y\land y\leq z)\rightarrow x\leq z$,\\
		O3: & Antisymmetry: $(x\leq y \land y\leq x)\leftrightarrow x=y$,\\
		O4: & Totality: $(x\leq y\rightarrow\bot)\rightarrow y\leq x$.\\
		O5: & Addition order: $x\leq y\rightarrow x+z\leq y+z$.\\
		O6: & Multiplication order: $(x\leq y\land z\geq0)\rightarrow xz\leq yz$.\\		
	\end{tabular}
\end{axiom}

\begin{axiom}[Completeness Property of $\mathbb{\widehat{R}}$]
	\label{ax:CompletenessPropertyOfRHat}
	Every non-empty bounded above subset of $\mathbb{\widehat{R}}$ has a least upper bound (see Definition \ref{def:ClassicalLeastUpperBound}).
\end{axiom}

\begin{axiom}[Infinitesimal Property of $\mathbb{\widehat{R}}$]
	\label{ax:InfinitesimalPropertyOfRHat}
	The set $\mathbb{\widehat{R}}$ has an infinitesimal (see Definition \ref{def:NumbersInWidehatR} for what infinitesimal is).
\end{axiom}

\begin{axiom}[Archimedean Property of $\mathbb{\widehat{R}}$]
	\label{ax:ArchimedeanPropertyOfRHat}
	For all $x,y>0$, $\exists n $ such that $nx>y$ (see Definition \ref{def:Operator<>} for the operator $>$).
\end{axiom}

%There are several points which should be noted at the outset. The first point is the insistence of using the $(A=B)\rightarrow\bot$ notation such as in M4 and M5 in Axiom \ref{ax:MultiplicativePropertyOfRHat}. Take the one in M4, for example, that is $(1=0)\rightarrow\bot$. The reason why we insist of using this notation due to the possible use of paraconsistent logic in our theory. In classical logic, for any statements $A$ and $A$, $A\not=B$ is equivalent to $(A=B)\rightarrow\bot$. However, this is not the case in paraconsistent logic. When we write $A\not=B$, it is still possible that $A$ is actually the same with $B$ even though they might be distinct. If what we intend to say is that it is impossible that $A=B$, then we should have used $(A=B)\rightarrow\bot$ which in other words saying that $A=B$ leads to absurdity.

Notice what Axiom \ref{ax:CompletenessPropertyOfRHat} and Axioms \ref{ax:InfinitesimalPropertyOfRHat} and \ref{ax:ArchimedeanPropertyOfRHat} cause. The first axiom, which states the completeness property of $\mathbb{\widehat{R}}$, causes computability issues in our set. The last two axioms, they cause the consistency trouble. Infinity and infinitesimals are formally defined in \Cref{def:NumbersInWidehatR}.

%\begin{definition}
%	\label{def:ClassicalLeastUpperBound}
%	Suppose that $S$ is a bounded above non-empty subset of $\mathbb{\widehat{R}}$. Classically, an element $t\in\mathbb{\widehat{R}}$ is a least upper bound of $S$, called sup $S$ (the supremum of $S$) iff $t$ is an upper bound of $S$ i.e. $\forall s\in S$ $s\leq t$, and $t\leq u$ for every upper bound $u$ of $S$.
%\end{definition}

\begin{definition}
	\label{def:NumbersInWidehatR}
	An element $x\in\mathbb{\widehat{R}}$ is
	\begin{itemize}
		\item infinitesimal iff  $\forall n\in\mathbb{N}$ $|x|<\frac{1}{n}$;
		\item finite iff  $\exists r\in\R$ $|x|<r$;
		\item infinite iff  $\forall r\in\R$ $|x|>r$;
		\item appreciable iff  $x$ is finite but not an infinitesimal;
	\end{itemize}
	By using notation $\epsilon$ as an infinitesimal, an infinity $\omega$ is defined as a reciprocal of $\epsilon$, i.e. $\frac{1}{\epsilon}$.
\end{definition}

\begin{definition}
	\label{def:Operator<>}
	For any numbers $x,y\in\mathbb{\widehat{R}}$,
	\begin{enumerate}
		\item $x\geq y:=x< y\rightarrow\bot$
		\item $x<y:=x\leq y\land(x=y\rightarrow\bot)$
		\item $x>y:=x\geq y\land(x=y\rightarrow\bot)$
	\end{enumerate}
\end{definition}

\noindent If we look further, the set $\mathbb{\widehat{R}}$ is actually an inconsistent set. \Cref{ex:InconsistentOfL} gives one of these contradictions.

\begin{example}
	\label{ex:InconsistentOfL}
	Suppose that we have a set $S=\{x\in\mathbb{\widehat{R}}:\abs{x}<\frac{1}{n}\textnormal{ for all } n\in\mathbb{N}\}$. In other words, the set $S$ consists of all infinitesimals in $\mathbb{\widehat{R}}$. It is easily proven that $S$ is not empty and bounded above. So by Completeness Axiom, $S$ has a least upper bound. Suppose that $z$ is its least upper bound (which also means that $z$ must be an infinitesimal). Because $z$ is an infinitesimal, $2z$ is also an infinitesimal and this means $2z$ is also in $\mathbb{\widehat{R}}$. By using \Cref{def:Operator<>}, $z<2z$ and so, $z$ is not the least upper bound of $S$. Now suppose that sup $S=2z$. The same argument can be used to show that $2z$ is not supremum of $S$ but $3z$. We can build this same argument infinitely to show that there does not exist $s$ such that sup $S=s$. Thus, we have $\exists s:\textnormal{sup } S=s$ and $\nexists s:\textnormal{sup } S=s$.
\end{example}

This kind of contradiction forces us to use a non-explosive logic such as paraconsistent logic instead of classical logic to do our reasoning in $\mathbb{\widehat{R}}$. Furthermore, we choose a particular paraconsistent reasoning strategy, \textit{Chunk and Permeate} (C\&P), to resolve our dilemma. The detail explanations of this strategy can be seen in \cite{brown2004chunk}.

%********************************** %Fourth Section  **************************************
\subsection{Chunks in $\mathfrak{\widehat{L}}$ and The Creation of The Set $\Rzl$} 

Using the C\&P strategy, we divided $\mathfrak{\widehat{L}}$ into some consistent chunks --- naturally, there might be several ways to do it (e.g. one can have an idea to divide the original set into two, three, or even more chunks). Nevertheless, we found out that one particular way to have just two different chunks, as provided in here, is the most interesting one as it leads to the creation of a new model. One chunk is a set which contains Axioms (\ref{ax:AdditivePropertyOfRHat}--\ref{ax:TotalOrderPropertyOfRHat},\ref{ax:InfinitesimalPropertyOfRHat}), while the other chunk is a set which contains Axioms (\ref{ax:AdditivePropertyOfRHat}--\ref{ax:CompletenessPropertyOfRHat},\ref{ax:ArchimedeanPropertyOfRHat}). The consistency of each chunk was proved by providing a model for each of them.

\subsubsection{Model for The First Chunk}
One of the possible --- and interesting --- chunks is a set consists of Axioms \ref{ax:AdditivePropertyOfRHat}-\ref{ax:TotalOrderPropertyOfRHat} and Axiom \ref{ax:InfinitesimalPropertyOfRHat}. Here we proved the consistency of this chunk and also some corollaries that we have. 

It is well-established that the set of hyperreals, $\mathbb{^*R}$, clearly satisfied those axioms \cite{benci2003alpha}. Nevertheless, the construction of hyperreals $\mathbb{^*R}$ depends on highly non-constructive arguments. In particular, it requires an axiom of set theory, the well-ordering principle, which assumes into existence something that cannot be constructed \cite{henle2012which}. Here we proposed to take a look at a simpler set. Remember that our set has to contain not just $\R$, but also infinitesimals and infinities (and the combinations of the two). We take $\Rz$, functions from integers to real numbers, as our base set. The member of $\Rz$ consists of \textit{standard} and \textit{non-standard} parts. The \textit{standard part} of a certain number simply shows its finite element (the real part), while the \textit{non-standard part} shows its infinite or infinitesimal part (see \Cref{def:MemberOfR^Z}).

\begin{definition}[Member of $\Rz$]
	\label{def:MemberOfR^Z}
	A typical member of $\Rz$ has the form $\mathbf{x}=\langle\epsilon_{-i}, \widehat{x},{\epsilon_j}\rangle$ where $\widehat{x}\in\R$ and $\epsilon_n$ denotes the sequence of the constant part of infininitesimals if $n>0$, and infinities if $n<0$. 
\end{definition}

\noindent Notice that the symbol $\widehat{x}$ in Definition \ref{def:MemberOfR^Z} signs the standard part of a number in $\Rz$. Thus, the member of $\Rz$ can be seen as a sequence of infinite numbers. Example \ref{ex:ConvertNumberToR^Z} gives an overview of how to write a number as a member of $\Rz$.

\begin{example}[Numbers in $\Rz$]
	\label{ex:ConvertNumberToR^Z}
	\hspace{0cm}
	\begin{enumerate}
		\item The number 1 is written as $\textbf{1}=\langle\dots0,0,\widehat{1},0,0,\dots\rangle$.
		\item The number $\epsilon$ is written as $\mathbf{\epsilon}=\langle\dots0,\widehat{0},1,0,\dots\rangle$.
		\item The number $\omega$ (one of the infinities) is written as $\mathbf{\omega}=\langle\dots0,1,\widehat{0},0,\dots\rangle$.
		\item The number $2+2\epsilon-\omega^2$ is written as $\mathbf{2+2\epsilon-\omega^2}=\langle\dots,0,-1,0,\widehat{2},2,0,\dots\rangle$.
	\end{enumerate}
\end{example}

By using this form, all of the possible numbers can be written in $\Rz$. However, this infinite form is problematic in a number of ways. For example, multiplication cannot be easily defined and there might exist multiple inverses if the set $\Rz$ was going to be used (see Example 3.16 in \cite[p.~47]{nugraha2018naive}). Because of this, the semi-infinite form is motivated and the modified set is denoted by $\Rzl$. The only difference between $\Rzl$ and $\R$ is that, for any number $\mathbf{x}$, we will not have an infinite sequence on the left side of its standard part. See \cref{ex:ConvertNumberToR^Z<} and compare to \Cref{ex:ConvertNumberToR^Z}.

\begin{example}[Numbers in $\Rzl$]
	\label{ex:ConvertNumberToR^Z<}
	\hspace{0cm}
	\begin{enumerate}
		\item A number 1 is written as $\textbf{1}=\langle\widehat{1},0,0,\dots\rangle$.
		\item A number $\epsilon$ is written as $\mathbf{\epsilon}=\langle\widehat{0},1,0,\dots\rangle$.
		\item A number $\omega$ (one of the infinities) is written as $\mathbf{\omega}=\langle1,\widehat{0},0,\dots\rangle$.
		\item A number $2+2\epsilon-\omega^2$ is written as $\mathbf{2+2\epsilon-\omega^2}=\langle-1,0,\widehat{2},2,0,\cdots\rangle$.
	\end{enumerate}
\end{example}

\noindent Some of the properties of the set $\Rzl$ are as follows, while their proof (when needed) and some examples of them can be seen in \cite[p.~51--57]{nugraha2018naive}.

\begin{definition}[Addition and Multiplication in $\Rzl$]
	\label{def:AdditionAndMultiplicationInR^<Z}
	For any number $\mathbf{x}=\langle x_z\rangle$ and $\mathbf{y}=\langle y_z\rangle$ in $\Rzl$, define:
	\begin{align*}
		\mathbf{x}+\mathbf{y} = \langle x_z+y_z:z\in\mathbb{Z}\rangle
	\end{align*}
	and $\mathbf{x}\times\mathbf{y}$ is calculated by:
	\begin{align*}
		\mathbf{x}\times\mathbf{y} = \left(\sum_{i=-m} a_i\epsilon^{i}\right)\widehat{\times}\left(\sum_{j=-n} b_j\epsilon^{j}\right)=	\left(\sum_{k\in\mathbb{Z}}c_k\epsilon^{k}\right),
	\end{align*}
	where $c_k=\sum_{i+j=k}a_ib_j$.
\end{definition}

\begin{definition}[Order in $\Rzl$]
	The set $\Rzl$ is endowed with $\widehat{\leq}$, the lexicographical ordering.
\end{definition}

\begin{proposition}
	\label{prop:AdditiveAxiomR^<Z}
	The set $\Rzl$ satisfies the additive property in Axiom \ref{ax:AdditivePropertyOfRHat}.
\end{proposition}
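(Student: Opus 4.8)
The plan is to verify the five clauses A1--A5 of \Cref{ax:AdditivePropertyOfRHat} one at a time, in each case reducing the assertion about $\Rzl$ to the corresponding elementary fact about $\R$ by way of the coordinatewise definition of $+$ in \Cref{def:AdditionAndMultiplicationInR^<Z} and the fact that two members of $\Rzl$ are equal exactly when they agree in every coordinate.

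First I would dispose of closure (A1), which is the only clause whose verification is not a purely mechanical transcription. Writing a member as $\mathbf{x}=\langle x_z:z\in\mathbb{Z}\rangle$, the defining feature of the semi-infinite form is that the support of $\mathbf{x}$ is bounded below: there is $m_{\mathbf{x}}\in\mathbb{Z}$ with $x_z=0$ for all $z<m_{\mathbf{x}}$ (no infinite tail to the left of the standard part $\widehat{x}$). Given $\mathbf{x},\mathbf{y}\in\Rzl$ with such bounds, I would note that $x_z+y_z=0$ whenever $z<\min(m_{\mathbf{x}},m_{\mathbf{y}})$, so $\mathbf{x}+\mathbf{y}$ again has support bounded below and hence lies in $\Rzl$; each coordinate $x_z+y_z$ is a real number since $\R$ is closed under $+$.

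Next, A2 (commutativity) and A3 (associativity) lift coordinatewise: for every $z\in\mathbb{Z}$ one has $x_z+y_z=y_z+x_z$ and $(x_z+y_z)+w_z=x_z+(y_z+w_z)$ in $\R$, so the identities hold verbatim in $\Rzl$. For A4 I would exhibit $\mathbf{0}:=\langle 0:z\in\mathbb{Z}\rangle$ (the member with $\widehat{x}=0$ and all $\epsilon_n=0$), observe it belongs to $\Rzl$ trivially, and check $\mathbf{x}+\mathbf{0}=\langle x_z+0\rangle=\mathbf{x}$. For A5, given $\mathbf{x}=\langle x_z\rangle$ I would set $-\mathbf{x}:=\langle -x_z:z\in\mathbb{Z}\rangle$, which has the same lower support bound as $\mathbf{x}$ and so is in $\Rzl$, and check $\mathbf{x}+(-\mathbf{x})=\langle x_z+(-x_z)\rangle=\mathbf{0}$.

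There is essentially no hard step: the whole content is that coordinatewise addition inherits the additive-group structure of $\R$, and the only point requiring any care is that the semi-infinite membership condition (support bounded below) is preserved under addition and negation, which it is because the minimum of two integers is again an integer. I would also remark that, since every clause of A1--A5 is a universally quantified equation whose proof never appeals to the completeness axiom, no contradiction of the type in \Cref{ex:InconsistentOfL} can intrude, and the argument goes through in the ambient paraconsistent setting exactly as it would classically.
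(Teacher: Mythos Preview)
Your proposal is correct and follows the natural route: verifying A1--A5 coordinatewise, with the only non-mechanical point being that the semi-infinite support condition is preserved under $+$ and negation. The paper itself does not spell out a proof here but defers to \cite[p.~51--57]{nugraha2018naive}; your argument is exactly the expected one and there is nothing to add.
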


%\begin{proof}
%	\hspace{1cm}
%	\begin{enumerate}
%		\item By Definition \ref{def:AdditionAndMultiplicationInR^<Z} and because $\R$ is closed under addition, the set $\Rzl$ is also closed under $\widehat{+}$ operator. Commutative and associative properties hold also.
%		\item Define $\mathbf{0}=\langle\widehat{0}\rangle\in\Rzl$. By Definition \ref{def:AdditionAndMultiplicationInR^<Z}, for all $\mathbf{x}\in\Rzl,\mathbf{x}\hat{+}\mathbf{0}=\mathbf{x}$.
%		\item Define $\mathbf{-x}=\langle -x_i,\dots,-x_{-1},\widehat{-x_0},-x_1,\dots,-x_j\rangle\in\Rzl$. By Definition \ref{def:AdditionAndMultiplicationInR^<Z}, for all $\mathbf{x}\in\Rzl$, there exists $\mathbf{-x}\in\Rzl$ such that $\mathbf{x}\widehat{+}(\mathbf{-x})=\mathbf{0}$.
%	\end{enumerate}
%\end{proof}

\begin{proposition}
	\label{prop:MultiplicativeAxiomR^<Z}
	The set $\Rzl$ satisfies the multiplicative property in Axiom \ref{ax:MultiplicativePropertyOfRHat}.
\end{proposition}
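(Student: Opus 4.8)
The plan is to verify each of the five clauses M1--M5 of Axiom~\ref{ax:MultiplicativePropertyOfRHat} in turn for the operation $\widehat{\times}$ of \Cref{def:AdditionAndMultiplicationInR^<Z}, exploiting the fact that an element of $\Rzl$ is nothing but a formal Laurent series $\sum_{i\ge -m}a_i\epsilon^{i}$ over $\R$ (finitely many negative powers, possibly infinitely many positive ones), so that $\widehat{\times}$ is the Cauchy product. For \textbf{M1} (closure) I would first check that each coefficient $c_k=\sum_{i+j=k}a_ib_j$ is a genuinely finite sum: if $\mathbf{x}$ has no term below degree $-m$ and $\mathbf{y}$ none below degree $-n$, then the pairs $(i,j)$ with $i\ge -m$, $j\ge -n$, $i+j=k$ are finite in number, so $c_k$ is well defined; the same bound shows $c_k=0$ for $k<-(m+n)$, so $\mathbf{x}\,\widehat{\times}\,\mathbf{y}$ again has only finitely many negative-degree terms and hence lies in $\Rzl$. \textbf{M2} (commutativity) is immediate from $\sum_{i+j=k}a_ib_j=\sum_{i+j=k}b_ja_i$ using commutativity of $\R$.

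For \textbf{M3} (associativity) I would compute the degree-$k$ coefficient of both $(\mathbf{x}\,\widehat{\times}\,\mathbf{y})\,\widehat{\times}\,\mathbf{z}$ and $\mathbf{x}\,\widehat{\times}\,(\mathbf{y}\,\widehat{\times}\,\mathbf{z})$ and show each equals $\sum_{i+j+l=k}a_ib_jd_l$, the rearrangement being legitimate because, by the bound established for M1, only finitely many triples contribute; distributivity and associativity in $\R$ then finish it. For \textbf{M4} I would take $\mathbf{1}=\langle\widehat{1},0,0,\dots\rangle$, i.e.\ the series with $a_0=1$ and all other coefficients $0$; the degree-$k$ coefficient of $\mathbf{x}\,\widehat{\times}\,\mathbf{1}$ is $\sum_{i+j=k}a_i\delta_{j0}=a_k$, so $\mathbf{x}\,\widehat{\times}\,\mathbf{1}=\mathbf{x}$, and $(1=0)\to\bot$ holds in this (consistent) chunk because $\mathbf{1}$ and $\mathbf{0}$ differ already in their degree-$0$ coefficient, $1\ne 0$ in $\R$.

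The real work is \textbf{M5}. Given $\mathbf{x}$ with $(x=0)\to\bot$ --- which in $\Rzl$ just means some coefficient is nonzero --- let $v$ be the least index with $a_v\ne 0$; this minimum exists precisely because the negative support of $\mathbf{x}$ is finite, which is exactly the structural reason $\Rzl$ is preferred over $\Rz$. Factor $\mathbf{x}=a_v\epsilon^{v}(\mathbf{1}+\mathbf{u})$ where $\mathbf{u}=\sum_{i\ge 1}(a_{v+i}/a_v)\epsilon^{i}$ has order at least $1$, and define the candidate inverse $\mathbf{y}=a_v^{-1}\epsilon^{-v}\bigl(\sum_{n\ge 0}(-1)^n\mathbf{u}^{\,n}\bigr)$. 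I would then argue that $\sum_{n\ge 0}(-1)^n\mathbf{u}^{\,n}$ is a legitimate element of $\Rzl$: since $\mathbf{u}$ has order $\ge 1$, $\mathbf{u}^{\,n}$ has order $\ge n$, so for each fixed degree $k$ only the terms with $n\le k$ contribute and the coefficient of $\epsilon^{k}$ is a finite sum, while the factor $\epsilon^{-v}$ introduces only finitely many negative-degree terms. Finally I would check $\mathbf{x}\,\widehat{\times}\,\mathbf{y}=\mathbf{1}$, either from the telescoping identity $(\mathbf{1}+\mathbf{u})\,\widehat{\times}\,\sum_{n\ge 0}(-1)^n\mathbf{u}^{\,n}=\mathbf{1}$ (valid coefficientwise by the order estimate) or, equivalently, by building the inverse through the recursion $b_0=a_0^{-1}$, $b_k=-a_0^{-1}\sum_{i=1}^{k}a_ib_{k-i}$ after normalising to $v=0$.

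I expect the only genuine obstacle to be bookkeeping rather than ideas: making precise that all the infinite sums appearing in M3 and M5 are coefficientwise finite (so that they are meaningful in $\Rzl$ with no appeal to convergence), and confirming that the inverse produced really lands back in the semi-infinite form; everything else reduces to field arithmetic in $\R$. It is also worth stating explicitly the paraconsistent reading of the hypotheses: M4's $(1=0)\to\bot$ and M5's $(x=0)\to\bot$ are to be discharged \emph{within} this consistent chunk, where $=$ behaves classically, so that no contradiction is ever invoked in establishing the multiplicative structure of $\Rzl$.
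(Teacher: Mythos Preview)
Your proposal is correct and is in fact the standard verification that the field of formal Laurent series $\R(\!(\epsilon)\!)$ is a field; every step you outline goes through as written.

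The paper itself does not include a proof of this proposition in the text --- it defers to \cite[p.~51--57]{nugraha2018naive} --- but the surrounding structure of the paper reveals a different organisation of the argument, at least for M5. Rather than invoking the valuation $v$ and the geometric-series inverse in one stroke as you do, the paper isolates M5 as a separate sequence of results immediately following this proposition: first \Cref{prop:InverseOf1+Omega} ($\mathbf{1+\omega}$ has a unique inverse), then \Cref{prop:InverseOfEpsilon+Omega} ($\mathbf{\omega}\widehat{+}\mathbf{\epsilon}$), then \Cref{lem:InverseREpsilon+SOmega} and \Cref{lem:InverseRPlusOmega} for two-term combinations with real coefficients, and finally the general theorem that every $\mathbf{x}\in\Rzl$ has a unique inverse. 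This case-by-case build-up is consonant with the paper's ``na\"ive'' ethos --- concrete examples first, general pattern second --- and has the side benefit of exhibiting explicit inverses (cf.\ \Cref{ex:InverseComparison}), which the paper later exploits when comparing with Grossone theory. Your route is shorter, uniform, and makes the role of the semi-infinite restriction (existence of a least nonzero index) completely transparent; the paper's route is more exploratory but also establishes \emph{uniqueness} of the inverse along the way, which your sketch does not address (though it follows trivially once M1--M4 are in hand, since $\Rzl$ is then an integral domain).
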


%\begin{proof}
%	\hspace{1cm}
%	\begin{enumerate}
%		\item By Definition \ref{def:AdditionAndMultiplicationInR^<Z} and because $\R$ is closed under multiplication, the set $\Rzl$ is also closed under $\widehat{\times}$ operator. Similar reason for proving commutative and associative properties.
%		\item Define $\textbf{1}=\langle\widehat{1}\rangle$. It is clear that if $\textbf{1}=\mathbf{0}$ then $1=0$, which brings an absurdity. Also, from Definition \ref{def:AdditionAndMultiplicationInR^<Z}, for any $\mathbf{x}\in\Rzl$, $\mathbf{x}\widehat{\times}\textbf{1}=\mathbf{x}\widehat{\times}\langle\widehat{1}\rangle=\mathbf{x}$.
%		%\item (for the existence of inverse) 
%	\end{enumerate}
%\end{proof}

\begin{proposition}
	\label{prop:DistributiveAxiomR^<Z}
	The set $\Rzl$ satisfies the distributive property in Axiom \ref{ax:DistributivePropertyOfRHat}.
\end{proposition}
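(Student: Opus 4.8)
The plan is to verify the distributive identity of \Cref{ax:DistributivePropertyOfRHat} coefficient-by-coefficient, exploiting the fact that every element of $\Rzl$ is a formal series in $\epsilon$ with only finitely many terms of negative index, so that all the sums that occur are finite and may be freely rearranged.

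First I would fix three elements $\mathbf{x}=\sum_{i\geq -m}a_i\epsilon^i$, $\mathbf{y}=\sum_{j\geq -n}b_j\epsilon^j$ and $\mathbf{z}=\sum_{j\geq -n}c_j\epsilon^j$ of $\Rzl$; after padding with zero coefficients I may assume $\mathbf{y}$ and $\mathbf{z}$ start at the same index $-n$. By \Cref{def:AdditionAndMultiplicationInR^<Z} the coefficient of $\epsilon^j$ in $\mathbf{y}+\mathbf{z}$ is $b_j+c_j$, so the coefficient of $\epsilon^k$ in $\mathbf{x}\times(\mathbf{y}+\mathbf{z})$ is $\sum_{i+j=k}a_i(b_j+c_j)$, where the sum runs over the finite index set $\{i : -m\leq i\leq k+n\}$ (since we need both $i\geq -m$ and $k-i\geq -n$). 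Then, using that $\R$ is distributive, $a_i(b_j+c_j)=a_ib_j+a_ic_j$ for each pair, and since the sum is finite it splits as $\sum_{i+j=k}a_ib_j+\sum_{i+j=k}a_ic_j$. By \Cref{def:AdditionAndMultiplicationInR^<Z} again these are exactly the coefficients of $\epsilon^k$ in $\mathbf{x}\times\mathbf{y}$ and in $\mathbf{x}\times\mathbf{z}$, and their sum is the coefficient of $\epsilon^k$ in $(\mathbf{x}\times\mathbf{y})+(\mathbf{x}\times\mathbf{z})$. As this holds for every $k\in\mathbb{Z}$, the two series coincide, and both are genuine elements of $\Rzl$ by \Cref{prop:AdditiveAxiomR^<Z} and \Cref{prop:MultiplicativeAxiomR^<Z} (closure under $+$ and $\times$).

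There is no serious obstacle here; the one point that genuinely needs care — and which I would state explicitly — is the finiteness of the convolution sums, since it is this that licenses both the termwise use of distributivity in $\R$ and the splitting of the sum into two. That finiteness is precisely the payoff of working in $\Rzl$ rather than $\Rz$: the ``semi-infinite'' constraint (only finitely many negative powers of $\epsilon$) guarantees that for each fixed output index $k$ only finitely many pairs $(i,j)$ with $i+j=k$ contribute. I would also remark that the argument uses neither commutativity nor associativity of $\times$, only the field distributivity in $\R$, so this proposition is logically independent of \Cref{prop:MultiplicativeAxiomR^<Z} apart from the closure statement cited at the end.
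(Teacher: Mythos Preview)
Your argument is correct and is exactly the standard coefficient-by-coefficient verification one expects for formal Laurent series; the paper itself does not spell out a proof of this proposition but defers it to \cite[p.~51--57]{nugraha2018naive}, and there is no reason to think that reference does anything different. Your explicit attention to the finiteness of the convolution sums (the point of working in $\Rzl$ rather than $\Rz$) is well placed and is the only substantive content of the proof.
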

%\begin{proof}
%	By considering a member of $\Rzl$ as a Laurent series, one can prove that $\mathbf{x}\widehat{\times}(\mathbf{y}\widehat{+}\mathbf{z})=(\mathbf{x}\widehat{\times}\mathbf{y})\widehat{+}(\mathbf{x}\widehat{\times}\mathbf{z})$. See \cite{lang2013complex} for its standard proof.
%\end{proof}

\begin{proposition}
	\label{prop:OrderAxiomR^<Z}
	The set $\Rzl$ satisfies the total order property in Axiom \ref{ax:TotalOrderPropertyOfRHat}.
\end{proposition}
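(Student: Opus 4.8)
The plan is to unwind the lexicographical order $\widehat{\leq}$ into an explicit coordinate condition and then verify O1--O6 one axiom at a time, delegating almost all of the work to the corresponding facts about $\R$. Concretely, for $\mathbf{x}=\langle x_z\rangle\ne\mathbf{y}=\langle y_z\rangle$ in $\Rzl$, the set $\{z\in\mathbb{Z}: x_z\ne y_z\}$ is non-empty and --- because both numbers are in \emph{semi-infinite} form, so only finitely many negative indices are nonzero --- bounded below; hence it has a least element $z_0$, and by definition $\mathbf{x}\mathbin{\widehat{<}}\mathbf{y}$ (the strict part of $\widehat{\leq}$) iff $x_{z_0}<y_{z_0}$ in $\R$, while $\mathbf{x}\mathbin{\widehat{\leq}}\mathbf{y}$ iff $\mathbf{x}=\mathbf{y}$ or $\mathbf{x}\mathbin{\widehat{<}}\mathbf{y}$. (It is worth recording here that this is precisely where the passage from $\Rz$ to $\Rzl$ pays off: without a lower bound on the support of $\mathbf{x}-\mathbf{y}$ there need be no first differing coordinate, and $\widehat{\leq}$ would not be well defined.) Reflexivity (O1) is then immediate. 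For antisymmetry (O3), if $\mathbf{x}\mathbin{\widehat{\leq}}\mathbf{y}$ and $\mathbf{y}\mathbin{\widehat{\leq}}\mathbf{x}$ but $\mathbf{x}\ne\mathbf{y}$, the common first differing index $z_0$ would give $x_{z_0}<y_{z_0}$ and $y_{z_0}<x_{z_0}$, which is absurd in $\R$; the converse direction is trivial.

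For transitivity (O2), assume $\mathbf{x}\mathbin{\widehat{\leq}}\mathbf{y}\mathbin{\widehat{\leq}}\mathbf{z}$; after disposing of the cases where one of the two inequalities is an equality, let $z_1$ and $z_2$ be the first differing indices of the pairs $(\mathbf{x},\mathbf{y})$ and $(\mathbf{y},\mathbf{z})$. I would split into $z_1<z_2$, $z_1>z_2$, and $z_1=z_2$; in each case one checks that the first index at which $\mathbf{x}$ and $\mathbf{z}$ differ is $\min(z_1,z_2)$ and reads off the required strict inequality from the appropriate fact in $\R$ (transitivity of $<$ when $z_1=z_2$, a direct comparison otherwise). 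Totality (O4) is similar: from $\mathbf{x}\mathbin{\widehat{\leq}}\mathbf{y}\to\bot$ one gets $\mathbf{x}\ne\mathbf{y}$ and, at their first differing index $z_0$, that $x_{z_0}<y_{z_0}$ leads to $\bot$, hence $x_{z_0}\ge y_{z_0}$; since also $x_{z_0}\ne y_{z_0}$, the order axioms of $\R$ give $y_{z_0}<x_{z_0}$, i.e. $\mathbf{y}\mathbin{\widehat{<}}\mathbf{x}$. For the addition order (O5), use that $+$ is coordinatewise (Definition \ref{def:AdditionAndMultiplicationInR^<Z}): if $\mathbf{x}\mathbin{\widehat{\leq}}\mathbf{y}$ with first differing index $z_0$, then $\mathbf{x}+\mathbf{z}$ and $\mathbf{y}+\mathbf{z}$ agree below $z_0$ and at $z_0$ satisfy $x_{z_0}+z_{z_0}<y_{z_0}+z_{z_0}$ by O5 in $\R$, so $z_0$ is again the first differing index and the inequality transfers; closure of $\Rzl$ under $+$ is Proposition \ref{prop:AdditiveAxiomR^<Z}.

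The genuine work is the multiplication order (O6), since $\times$ on $\Rzl$ is the Cauchy product of Definition \ref{def:AdditionAndMultiplicationInR^<Z}, not coordinatewise. First I would reduce it to a statement about a single number: by O5 (just proved), $\mathbf{x}\mathbin{\widehat{\leq}}\mathbf{y}$ is equivalent to $\mathbf{0}\mathbin{\widehat{\leq}}\mathbf{y}-\mathbf{x}$, and by distributivity (Proposition \ref{prop:DistributiveAxiomR^<Z}) $\mathbf{y}\mathbf{z}-\mathbf{x}\mathbf{z}=(\mathbf{y}-\mathbf{x})\mathbf{z}$, so it suffices to show that $\mathbf{0}\mathbin{\widehat{\leq}}\mathbf{w}$ and $\mathbf{0}\mathbin{\widehat{\leq}}\mathbf{z}$ imply $\mathbf{0}\mathbin{\widehat{\leq}}\mathbf{w}\mathbf{z}$ (the hypothesis $\mathbf{z}\mathbin{\widehat{\geq}}\mathbf{0}$ unpacks, via Definition \ref{def:Operator<>} and the trichotomy argument just used for O4, to $\mathbf{0}\mathbin{\widehat{\leq}}\mathbf{z}$). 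If $\mathbf{w}=\mathbf{0}$ or $\mathbf{z}=\mathbf{0}$ the product is $\mathbf{0}$ and we are done; otherwise let $p$ and $q$ be the least indices with $w_p\ne 0$ and $z_q\ne 0$. Comparing $\mathbf{w}$ with $\mathbf{0}$ shows $p$ is their first differing index, so $\mathbf{0}\mathbin{\widehat{\leq}}\mathbf{w}$ forces $w_p>0$, and likewise $z_q>0$. In the Cauchy product, $c_k=\sum_{i+j=k}w_iz_j$ vanishes for every $k<p+q$ (each summand has $i<p$ or $j<q$) while $c_{p+q}=w_pz_q>0$, so $p+q$ is the first index at which $\mathbf{w}\mathbf{z}$ differs from $\mathbf{0}$ and the coefficient there is positive; hence $\mathbf{0}\mathbin{\widehat{<}}\mathbf{w}\mathbf{z}$. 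The one real subtlety is bookkeeping: one must know that $p+q$ is a \emph{legitimate} lowest index, i.e. that $\mathbf{w}\mathbf{z}\in\Rzl$ has only finitely many infinite coordinates, which is exactly the content of Proposition \ref{prop:MultiplicativeAxiomR^<Z}. Note finally that all the ``$\to\bot$'' steps above are discharged inside $\R$ (or inside $\Rzl$, which is consistent as the model for this chunk), so no appeal to genuinely paraconsistent reasoning is needed.
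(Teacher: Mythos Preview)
Your proof is correct and, in fact, more detailed than what the paper provides: the paper does not give a proof of this proposition at all, instead deferring with the blanket remark that ``their proof (when needed) and some examples of them can be seen in \cite[p.~51--57]{nugraha2018naive}''. So there is no in-text argument to compare against.

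That said, your route is the natural one and matches what one would expect from the cited thesis: make the lexicographic order explicit via the first differing coordinate (which exists precisely because of the semi-infinite restriction defining $\Rzl$), and reduce O1--O6 to the corresponding order axioms for $\R$. Your handling of O6 is the right one --- reducing via O5 and distributivity to ``nonnegative times nonnegative is nonnegative'', then reading off the leading coefficient $w_pz_q>0$ of the Cauchy product --- and your remark that closure of $\Rzl$ under multiplication (Proposition~\ref{prop:MultiplicativeAxiomR^<Z}) is needed to ensure $p+q$ is a legitimate lowest index is exactly the point that would otherwise be a gap. The transitivity case split and the unpacking of $\mathbf{z}\mathbin{\widehat{\geq}}\mathbf{0}$ into $\mathbf{0}\mathbin{\widehat{\leq}}\mathbf{z}$ via trichotomy are also handled correctly.
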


The following results show how to find an inverse of any members of $\Rzl$ and its uniqueness property.

\begin{proposition}
	\label{prop:InverseOf1+Omega}
	The number $\mathbf{1+\omega}$ has a unique inverse.
\end{proposition}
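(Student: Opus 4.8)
The plan is to exhibit the inverse explicitly as an infinite series and then establish uniqueness, preferring a direct coefficient recursion over the abstract monoid argument because it makes visible \emph{why} the statement is true in $\Rzl$ (and fails in $\Rz$).

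\emph{Setup and candidate.} In the series form of \Cref{def:MemberOfR^Z} (as written out in \cref{ex:ConvertNumberToR^Z<}), $\mathbf{1+\omega} = \epsilon^{-1}+1$, i.e.\ the element $\langle 1,\widehat{1},0,0,\dots\rangle$, whose lowest-order coefficient is $1\ne 0$; in particular $(\mathbf{1+\omega}=0)\to\bot$, so by M5 in Proposition \ref{prop:MultiplicativeAxiomR^<Z} an inverse exists. I would nonetheless produce it by hand via the factorisation $\epsilon^{-1}+1=\epsilon^{-1}(1+\epsilon)$: the inverse of $\epsilon^{-1}$ is $\epsilon$ and the inverse of $1+\epsilon$ should be the geometric series $\sum_{k\ge 0}(-1)^k\epsilon^k$, so the natural candidate is
\[
  \mathbf{y} \;=\; \sum_{k\ge 1}(-1)^{k+1}\epsilon^{k} \;=\; \langle \widehat{0},\,1,\,-1,\,1,\,-1,\,\dots\rangle .
\]

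\emph{Existence.} First note $\mathbf{y}\in\Rzl$: it has no negative powers of $\epsilon$, and an infinite tail of infinitesimal terms is allowed. Then I would check $(\mathbf{1+\omega})\times\mathbf{y}=\mathbf{1}$ directly from the Cauchy product of \Cref{def:AdditionAndMultiplicationInR^<Z}. Writing $\mathbf{1+\omega}=\sum_i a_i\epsilon^i$ with $a_{-1}=a_0=1$ and all other $a_i=0$, and $\mathbf{y}=\sum_j d_j\epsilon^j$ with $d_j=(-1)^{j+1}$ for $j\ge 1$ and $d_j=0$ otherwise, the product coefficient is $c_k=\sum_{i+j=k}a_id_j=d_{k+1}+d_k$. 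The lowest power occurring is $(-1)+1=0$, so $c_k=0$ for $k<0$; then $c_0=d_1+d_0=1+0=1$; and $c_k=d_{k+1}+d_k=(-1)^{k+2}+(-1)^{k+1}=0$ for $k\ge 1$. Hence the product is $\langle\widehat{1},0,0,\dots\rangle=\mathbf{1}$.

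\emph{Uniqueness.} The quick route: by Proposition \ref{prop:MultiplicativeAxiomR^<Z} the structure $(\Rzl,\times)$ is a commutative monoid (M2--M4), and in any monoid two-sided inverses are unique --- if $\mathbf{z}$ also satisfies $(\mathbf{1+\omega})\times\mathbf{z}=\mathbf{1}$ then $\mathbf{y}=\mathbf{y}\times(\mathbf{1+\omega})\times\mathbf{z}=\mathbf{z}$. I would in addition give the more transparent argument: write any inverse as $\mathbf{z}=\sum_{k\ge N}d_k\epsilon^k$ with $d_N\ne 0$ (possible because members of $\Rzl$ have only finitely many infinite components, so there is a genuine lowest term). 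The unique pair realising the minimal exponent in $(\mathbf{1+\omega})\times\mathbf{z}$ is $(i,j)=(-1,N)$, giving nonzero coefficient $a_{-1}d_N=d_N$ in degree $N-1$ and nothing below it; matching $\mathbf{1}$ forces $N=1$ and $d_1=1$. The remaining equations $c_k=d_{k+1}+d_k=0$ for $k\ge 1$ then give the recursion $d_{k+1}=-d_k$, which together with $d_1=1$ pins down every coefficient as $d_k=(-1)^{k+1}$; hence $\mathbf{z}=\mathbf{y}$.

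\emph{Main obstacle.} There is no heavy computation; the point requiring care is the \emph{anchoring} of the recursion. Both the well-definedness of the Cauchy product (each $c_k$ a finite sum) and the existence of a genuine lowest-order coefficient $d_N$ rely on elements of $\Rzl$ being bounded below in powers of $\epsilon$. This is exactly the feature $\Rz$ lacks, and it is what makes the coefficient recursion fully determined rather than leaving a ``free parameter at infinity''; so the real substance of the proposition is this contrast with the situation in $\Rz$ alluded to after \cref{ex:ConvertNumberToR^Z<}.
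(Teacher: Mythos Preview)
Your argument is correct. The explicit candidate $\mathbf{y}=\sum_{k\ge 1}(-1)^{k+1}\epsilon^{k}$ is the right one, the Cauchy-product verification is clean, and both uniqueness arguments (monoid and coefficient recursion) are valid; the closing remark about why the anchoring requires $\Rzl$ rather than $\Rz$ is exactly the point.

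The paper does not display its proof here (it defers to \cite[p.~51--57]{nugraha2018naive}), but the surrounding context makes clear that the intended route is the same as yours: the progression through Propositions~\ref{prop:InverseOf1+Omega}--\ref{prop:InverseOfEpsilon+Omega} and Lemmas~\ref{lem:InverseREpsilon+SOmega}--\ref{lem:InverseRPlusOmega} builds up explicit inverses case by case, and Example~\ref{ex:InverseComparison} later exhibits the inverse of $\epsilon+\omega$ in precisely the alternating-series form you used. So your approach is essentially the paper's.
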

%\begin{proof}
%	We want to find the inverse of $\mathbf{1+\omega}$, i.e. a number $\mathbf{y}$ such that:
%	\begin{align*}
%	\mathbf{y}\widehat{\times}(\mathbf{1+\omega})=\textbf{1}.
%	\end{align*}
%	Following the similar process in Example \ref{ex:multipleinversesinRZ}, we will get this system of equations:
%	\begin{equation}\label{eq:SystemOfEq_2}
%	\begin{split}
%	y_0+y_1 & = 1\\
%	y_1+y_2 & = 0\\
%	y_2+y_3 & = 0\\
%	y_3+y_4 & = 0\\
%	\vdots\\
%	y_{-1}+y_0 & = 0\\
%	y_{-2}+y_{-1} & = 0\\
%	\vdots\\
%	y_{-i+1}+y_{-i+2} & = 0\\
%	y_{-i}+y_{-i+1} & = 0
%	\end{split}
%	\end{equation}
%	Now suppose that $y_0=x\neq0$. This particular assignment gives us
%	\begin{align*}
%	y_1=y_3=y_5=y_7=\dots=1-x,\\
%	y_2=y_4=y_6=y_8=\dots=x-1,\\
%	y_{-1}=y_{-3}=y_{-5}=y_{-7}=\dots=-x\neq0, \textnormal{ and}\\
%	y_{-2}=y_{-4}=y_{-6}=y_{-8}=\dots=x\neq0.
%	\end{align*}
%	Because $\mathbf{y}$ must be in $\Rzl$ (which means $\exists n<0$ such that $\forall m<n$, $y_m=0$), the last two lines of the equalities above cannot be held and so, $y_0$ has to be equal to $0$. This final assignment implies that
%	\begin{align*}
%	y_1=y_3=y_5=y_7=\dots=1,\\
%	y_2=y_4=y_6=y_8=\dots=-1, \textnormal{ and}\\
%	y_n=0\textnormal{ } \forall n<0.
%	\end{align*}
%	From the assignments above, we may infer that $\mathbf{y}$, the inverse of $\mathbf{1+\omega}$, is unique and its form is:
%	\begin{align*}
%	\mathbf{y}=\langle\widehat{0},1,-1,1,-1,\dots\rangle=\epsilon-\epsilon^2+\epsilon^3-\epsilon^4+\dots
%	\end{align*}
%\end{proof}

\begin{proposition}
	\label{prop:InverseOfEpsilon+Omega}
	The number $\mathbf{\omega\widehat{+}\epsilon}$ has a unique inverse.
\end{proposition}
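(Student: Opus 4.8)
The plan is to exhibit the inverse explicitly and then show it is the only one, working throughout with the formal-Laurent-series picture of $\Rzl$ and the Cauchy product of \Cref{def:AdditionAndMultiplicationInR^<Z}. The starting observation is the factorisation $\mathbf{\omega\widehat{+}\epsilon}=\epsilon^{-1}\times(\mathbf{1}+\epsilon^{2})$, which suggests that the inverse ought to be $\epsilon$ times a geometric-series expansion of $(\mathbf{1}+\epsilon^{2})^{-1}$, i.e.\ the candidate
\[
\mathbf{y}:=\sum_{n\ge 0}(-1)^{n}\epsilon^{2n+1}=\epsilon-\epsilon^{3}+\epsilon^{5}-\cdots .
\]
Note first that $\mathbf{y}$ is a legitimate element of $\Rzl$: its lowest-degree term is $\epsilon^{1}$, so it has no infinite part and its representation is semi-infinite (cf.\ \Cref{ex:ConvertNumberToR^Z<}) rather than living only in the wider bi-infinite set $\Rz$.

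Existence then reduces to a direct check using $c_{k}=\sum_{i+j=k}a_{i}b_{j}$. Writing $\mathbf{\omega\widehat{+}\epsilon}$ as the coefficient sequence $a$ with $a_{-1}=a_{1}=1$ and $a_{i}=0$ otherwise, and $\mathbf{y}$ as the sequence $b$ with $b_{2n+1}=(-1)^{n}$, $b_{2n}=0$, the $k$-th coefficient of $(\mathbf{\omega\widehat{+}\epsilon})\times\mathbf{y}$ is $a_{-1}b_{k+1}+a_{1}b_{k-1}=b_{k+1}+b_{k-1}$, which is $0$ for every $k\ne 0$ (the two terms cancel) and equals $1$ at $k=0$. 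Hence $(\mathbf{\omega\widehat{+}\epsilon})\times\mathbf{y}=\mathbf{1}$. Every coefficient here is a sum of at most two terms, so no convergence issue arises.

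For uniqueness, suppose $\mathbf{z}=\langle z_{k}\rangle\in\Rzl$ also satisfies $(\mathbf{\omega\widehat{+}\epsilon})\times\mathbf{z}=\mathbf{1}$. Comparing coefficients gives $z_{k+1}+z_{k-1}=0$ for all $k\ne 0$ together with $z_{1}+z_{-1}=1$. Since $\mathbf{z}$ lies in $\Rzl$ it has only finitely many nonzero coefficients of negative index and is nonzero, so it has a least index $m$ with $z_{m}\ne 0$; applying the recurrence at $k=m-1$ (which is $\ne 0$ whenever $m\le 0$) would force $z_{m-2}=-z_{m}\ne 0$, contradicting minimality, so $m\ge 1$. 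Then $z_{-1}=0$, the $k=0$ equation yields $z_{1}=1$, and the recurrence propagates to $z_{2n}=0$, $z_{2n+1}=(-1)^{n}$ for all $n\ge 0$; thus $\mathbf{z}=\mathbf{y}$. (Alternatively, once \Cref{prop:MultiplicativeAxiomR^<Z} is invoked, so that $\times$ is associative and commutative with unit $\mathbf{1}$, uniqueness is the usual one-line argument $\mathbf{z}_{1}=\mathbf{z}_{1}\times\mathbf{1}=\mathbf{z}_{1}\times((\mathbf{\omega\widehat{+}\epsilon})\times\mathbf{z}_{2})=(\mathbf{z}_{1}\times(\mathbf{\omega\widehat{+}\epsilon}))\times\mathbf{z}_{2}=\mathbf{z}_{2}$.)

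The argument is entirely elementary, so there is no deep obstacle; the only points demanding a little care are confirming that the candidate $\mathbf{y}$ really belongs to $\Rzl$ (it does, since it carries no negative powers of $\epsilon$) and the minimal-index step in the uniqueness part, which is precisely what rules out any infinite, i.e.\ negative-degree, contribution sneaking into $\mathbf{z}$. Everything else is routine bookkeeping with the Cauchy product.
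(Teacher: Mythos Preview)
Your proposal is correct. The explicit inverse you construct,
\[
\mathbf{y}=\epsilon-\epsilon^{3}+\epsilon^{5}-\epsilon^{7}+\cdots=\langle\widehat{0},1,0,-1,0,1,0,-1,\dots\rangle,
\]
is precisely the inverse the paper records in \Cref{ex:InverseComparison}, so your existence argument lands exactly where it should. The paper itself does not reproduce the proof of \Cref{prop:InverseOfEpsilon+Omega} in full (it defers the details to \cite[pp.~51--57]{nugraha2018naive}, following the pattern set by \Cref{prop:InverseOf1+Omega}), but the method there is the same in spirit: write down the candidate inverse coming from the geometric expansion of $(\mathbf{1}+\epsilon^{2})^{-1}$ shifted by a factor of $\epsilon$, and verify termwise via the Cauchy product. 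Your uniqueness argument via the minimal nonzero index is a clean way to exploit the semi-infinite shape of elements of $\Rzl$; the alternative one-liner from associativity and commutativity (\Cref{prop:MultiplicativeAxiomR^<Z}) is of course the standard group-theoretic route and is equally acceptable here.
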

%\begin{proof}
%	We want to find the inverse of $\mathbf{\omega\widehat{+}\epsilon}$, i.e. a number $\mathbf{y}$ such that:
%	\begin{align}\label{eq:InverseDefinition}
%		\mathbf{y}\widehat{\times}(\mathbf{\omega}\widehat{+}\mathbf{\epsilon})=\textbf{1}.
%	\end{align}
%	Analogous to the proof of the previous proposition, it can be shown that
%	\begin{align*}
%		\mathbf{y}=(\mathbf{\epsilon}\widehat{+}\mathbf{\omega})^{-1}=\langle\widehat{0},1,0,-1,0,1,0,-1,0,\dots\rangle=\epsilon-\epsilon^3+\epsilon^5-\epsilon^7+\dots.
%	\end{align*}
%\end{proof}

\begin{lemma}
	\label{lem:InverseREpsilon+SOmega}
	For any $r,s\in\R$, a number $r\mathbf{\epsilon}\widehat{+}s\mathbf{\omega}$ has a unique inverse.
\end{lemma}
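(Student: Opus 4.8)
The plan is to turn the equation defining an inverse into a two-step linear recurrence on the coordinates of the unknown and then solve it, exploiting the feature that separates $\Rzl$ from a full ring of formal Laurent series: a member may have only finitely many nonzero coordinates of negative index. Write the sought inverse as $\mathbf{y}=\langle b_z:z\in\mathbb{Z}\rangle$ with $b_z=0$ for all sufficiently negative $z$, and observe that $r\mathbf{\epsilon}\widehat{+}s\mathbf{\omega}$ is the element whose only nonzero coordinates are $a_{1}=r$ and $a_{-1}=s$. By the product formula in \Cref{def:AdditionAndMultiplicationInR^<Z}, the $k$-th coordinate of $\mathbf{y}\widehat{\times}(r\mathbf{\epsilon}\widehat{+}s\mathbf{\omega})$ is the finite sum $c_k=r\,b_{k-1}+s\,b_{k+1}$, so $\mathbf{y}$ is an inverse exactly when
\begin{align*}
	r\,b_{k-1}+s\,b_{k+1}=\begin{cases}1,& k=0,\\ 0,& k\neq 0,\end{cases}\qquad\text{for every }k\in\mathbb{Z}.
\end{align*}

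First I would clear away the degenerate cases: $r=s=0$ gives $\mathbf{0}$, which is not invertible and so is tacitly excluded, while if exactly one of $r,s$ vanishes each line of the recurrence has a single term and forces $\mathbf{y}=\tfrac1r\mathbf{\omega}$ (if $s=0$) or $\mathbf{y}=\tfrac1s\mathbf{\epsilon}$ (if $r=0$), obviously uniquely. So assume $r,s\neq 0$. The recurrence then links only coordinates of the same parity, splitting into an even and an odd sub-recurrence. Iterating the even one downward from $k=-1$ gives $b_{-2\ell}=(-s/r)^{\ell}b_{0}$ for all $\ell\ge 0$; if $b_0\neq 0$ this yields infinitely many nonzero negative-index coordinates, impossible in $\Rzl$, so $b_0=0$, and iterating upward then forces $b_{2\ell}=0$ for all $\ell\ge 0$. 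The same reasoning on the odd sub-recurrence (downward from $k=-2$) forces $b_{-1}=0$; the line $k=0$ then gives $b_1=1/s$, the lines $k=2,4,\dots$ give $b_{2\ell+1}=(-r/s)^{\ell}/s$, and the lines $k=-2,-4,\dots$ give $b_{-(2\ell+1)}=0$.

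Hence the only candidate is $\mathbf{y}=\tfrac1s\sum_{\ell\ge0}(-r/s)^{\ell}\mathbf{\epsilon}^{2\ell+1}$, which is a genuine element of $\Rzl$ because, although its support is infinite, that support lies entirely on the infinitesimal (positive-index) side, where infinite support is permitted. It remains only to check directly that this $\mathbf{y}$ satisfies the displayed system, which is a short telescoping computation; existence then holds, and uniqueness is already built into the derivation (and is in any case automatic from the monoid structure established in \Cref{prop:MultiplicativeAxiomR^<Z}, since two-sided inverses in a monoid are unique). This proves the lemma for all $(r,s)\neq(0,0)$ and generalizes \Cref{prop:InverseOfEpsilon+Omega}.

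The crux — and the only place where $\Rzl$ behaves differently from the ring of formal Laurent series — is the step forcing $b_0=0$ and $b_{-1}=0$: the recurrence on its own leaves exactly these two coordinates free, and eliminating them relies essentially on the one-sided shape of $\Rzl$. A minor point to state carefully is that the product in \Cref{def:AdditionAndMultiplicationInR^<Z} is well defined here at all, i.e. that every $c_k$ is a finite sum; this is so because $r\mathbf{\epsilon}\widehat{+}s\mathbf{\omega}$ has finite support, so no convergence question arises.
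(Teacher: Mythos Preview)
Your argument is correct. You set up the convolution equation for the inverse, reduce it to a two-term recurrence in the coordinates, and use the defining feature of $\Rzl$ (only finitely many nonzero negative-index entries) to kill the two free parameters $b_0$ and $b_{-1}$, after which the solution is forced. The resulting formula $\mathbf{y}=\tfrac1s\sum_{\ell\ge0}(-r/s)^{\ell}\mathbf{\epsilon}^{2\ell+1}$ specializes at $r=s=1$ to $\epsilon-\epsilon^3+\epsilon^5-\cdots$, which is exactly the inverse of $\epsilon+\omega$ that the paper exhibits in \Cref{ex:InverseComparison}, so your answer is consistent with what the authors compute.

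As for comparison with the paper: the paper does not actually display a proof of \Cref{lem:InverseREpsilon+SOmega}; it defers the argument (along with those of \Cref{prop:InverseOf1+Omega} and \Cref{prop:InverseOfEpsilon+Omega}) to \cite[p.~51--57]{nugraha2018naive}. Judging from the surrounding material and the explicit inverse in \Cref{ex:InverseComparison}, the intended approach is the same recurrence-solving you carry out. One small caveat worth flagging: the lemma as stated quantifies over all $r,s\in\R$, which literally includes $r=s=0$; you are right to note that this case must be excluded, but it would be cleaner to state this as a hypothesis rather than dismiss it as ``tacitly excluded.''
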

%\begin{proof}
%	This is a generalisation of Proposition \ref{prop:InverseOfEpsilon+Omega}.
%\end{proof}

\begin{lemma}
	\label{lem:InverseRPlusOmega}
	For any $r\in\R$, a number $r\widehat{+}\mathbf{\omega}$ (or $r\widehat{+}\mathbf{\epsilon})$ has a unique inverse.
\end{lemma}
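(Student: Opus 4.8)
The plan is to exhibit the inverse explicitly as a one-sided geometric series in $\mathbf{\epsilon}$ and then verify it, together with uniqueness, directly from the convolution formula $c_k=\sum_{i+j=k}a_ib_j$ of \Cref{def:AdditionAndMultiplicationInR^<Z}. Throughout I would use $\mathbf{\omega}=\mathbf{\epsilon}^{-1}$ and the fact (visible from \Cref{ex:ConvertNumberToR^Z<}) that $\mathbf{\epsilon}^{n}$ is the member of $\Rzl$ whose only nonzero coordinate is a $1$ in position $n$.

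First I would handle $r\widehat{+}\mathbf{\omega}$. Pulling out the lowest power, $r\widehat{+}\mathbf{\omega}=\mathbf{\epsilon}^{-1}\widehat{\times}\bigl(\mathbf{1}\widehat{+}r\mathbf{\epsilon}\bigr)$, which suggests the candidate
\[
\mathbf{y}=\sum_{k\ge 1}(-r)^{k-1}\mathbf{\epsilon}^{k}
      =\mathbf{\epsilon}-r\mathbf{\epsilon}^{2}+r^{2}\mathbf{\epsilon}^{3}-\cdots .
\]
This is a bona fide element of $\Rzl$ since it has no coordinates of negative index --- in particular it is semi-infinite on the left, which is exactly the constraint distinguishing $\Rzl$ from $\Rz$. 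Writing $r\widehat{+}\mathbf{\omega}$ as the sequence with $a_{-1}=1$, $a_{0}=r$, all other $a_i=0$, the formula for $c_k$ gives $c_k=0$ for $k<0$, $c_0=a_{-1}b_{1}=1$, and, for $m\ge 1$, $c_m=a_{-1}b_{m+1}+a_{0}b_{m}=b_{m+1}+r b_m=0$ by the recursion $b_{m+1}=-r b_m$. Hence $(r\widehat{+}\mathbf{\omega})\widehat{\times}\mathbf{y}=\mathbf{1}$. The case $r\widehat{+}\mathbf{\epsilon}$ is the same bookkeeping: if $(r=0)\to\bot$ take $\mathbf{y}=\sum_{k\ge 0}(-1)^{k}r^{-(k+1)}\mathbf{\epsilon}^{k}$ (now with $a_0=r$, $a_1=1$), and if $r=0$ then $r\widehat{+}\mathbf{\epsilon}=\mathbf{\epsilon}$, whose inverse is simply $\mathbf{\omega}$.

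For uniqueness I would first observe that $\Rzl$ has no zero divisors: a nonzero member has a well-defined lowest nonzero coordinate (negative indices being finite in number), and the convolution formula shows that the lowest nonzero coordinate of a product is the product of the two lowest nonzero coordinates, hence nonzero. Consequently, if $\mathbf{y},\mathbf{y}'$ were both inverses of $r\widehat{+}\mathbf{\omega}$ (resp.\ of $r\widehat{+}\mathbf{\epsilon}$), then $(r\widehat{+}\mathbf{\omega})\widehat{\times}(\mathbf{y}-\mathbf{y}')=\mathbf{0}$ forces $\mathbf{y}=\mathbf{y}'$; alternatively, the triangular recursion $b_1=1$, $b_{m+1}=-r b_m$ already determines every coordinate of $\mathbf{y}$, so uniqueness is immediate. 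The only step that requires genuine care --- and the natural place for the argument to slip --- is confirming that the proposed series really lives in $\Rzl$ and not merely in $\Rz$; factoring out the leading power of $\mathbf{\epsilon}$ at the start is precisely what keeps the inverse one-sided, and everything else is a mechanical check against \Cref{def:AdditionAndMultiplicationInR^<Z}.
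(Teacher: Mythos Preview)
Your proof is correct and follows essentially the same approach the paper uses: the paper omits the proof here (deferring to \cite{nugraha2018naive}), but from the build-up in Propositions~\ref{prop:InverseOf1+Omega}--\ref{prop:InverseOfEpsilon+Omega} and especially Example~\ref{ex:InverseComparison} (where the inverse of $\mathbf{\epsilon}+\mathbf{\omega}$ is written out as the explicit geometric series $\mathbf{\epsilon}-\mathbf{\epsilon}^3+\mathbf{\epsilon}^5-\cdots$) it is clear the intended method is exactly your explicit one-sided geometric series together with the triangular recursion for uniqueness. Your extra observation that $\Rzl$ has no zero divisors (via the least nonzero coordinate) is a clean alternative to the coordinate-by-coordinate recursion and is a slight strengthening beyond what the paper needs.
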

%\begin{proof}
%	This is a generalisation of Proposition \ref{prop:InverseOf1+Omega}.
%\end{proof}

\begin{theorem}
	For any number $\mathbf{x}\in\Rzl$, $\mathbf{x}$ has a unique inverse.
\end{theorem}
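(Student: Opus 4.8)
The plan is to recognise $\Rzl$ as a semi-infinite version of the field of formal Laurent series over $\R$ and to construct the inverse of $\mathbf{x}$ by pinning down its coefficients recursively, which delivers existence and uniqueness of the coefficients in one stroke; global uniqueness of the inverse is then the usual algebraic one-liner.

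First I would set aside the degenerate case. If $\mathbf{x}=\mathbf{0}$ then $\mathbf{x}\times\mathbf{y}=\mathbf{0}$ for every $\mathbf{y}$, and $\mathbf{0}\neq\mathbf{1}$ since $(1=0)\to\bot$ by M4, so — exactly as in M5 — the statement is understood under the hypothesis $(\mathbf{x}=\mathbf{0})\to\bot$. Writing $\mathbf{x}=\sum_{i\ge -m}a_i\epsilon^{i}$, the index set $\{i:a_i\neq 0\}$ is then nonempty and bounded below, hence has a least element $\ell$; put $u_0:=a_\ell\neq 0$. Factor $\mathbf{x}=\epsilon^{\ell}\times\mathbf{u}$ with $\mathbf{u}:=\epsilon^{-\ell}\times\mathbf{x}=\sum_{k\ge 0}a_{\ell+k}\epsilon^{k}$; this is legitimate because $\epsilon^{\ell},\epsilon^{-\ell}\in\Rzl$ (each has at most one, hence finitely many, infinite component) and $\epsilon^{\ell}\times\epsilon^{-\ell}=\epsilon^{0}=\mathbf{1}$ by the Cauchy product of \Cref{def:AdditionAndMultiplicationInR^<Z}. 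By associativity and commutativity of $\times$ (\Cref{prop:MultiplicativeAxiomR^<Z}), it suffices to invert $\mathbf{u}$: if $\mathbf{u}\times\mathbf{w}=\mathbf{1}$ then $\mathbf{x}\times(\epsilon^{-\ell}\times\mathbf{w})=\mathbf{1}$.

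Next I would build $\mathbf{w}=\sum_{k\ge 0}b_k\epsilon^{k}$ by comparing coefficients in $\mathbf{u}\times\mathbf{w}=\mathbf{1}$. The coefficient of $\epsilon^{0}$ forces $u_0 b_0=1$, so $b_0=u_0^{-1}$ (exists and is unique in $\R$ because $u_0\neq 0$). For $k\ge 1$ the coefficient of $\epsilon^{k}$ gives $\sum_{i=0}^{k}u_i b_{k-i}=0$, i.e. $b_k=-u_0^{-1}\sum_{i=1}^{k}u_i b_{k-i}$, which is uniquely determined by $b_0,\dots,b_{k-1}$; by induction every $b_k$ exists and is unique. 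The resulting $\mathbf{w}$ lies in $\Rzl$ since it has no infinite components, and every coefficient $c_k$ of $\mathbf{u}\times\mathbf{w}$ is a genuinely finite sum (both factors are indexed from $0$ upward), so the product is well defined and equals $\mathbf{1}$ by construction; equivalently, writing $\mathbf{u}=u_0(\mathbf{1}-\mathbf{v})$ with $\mathbf{v}$ of strictly positive order, $\mathbf{w}=u_0^{-1}\sum_{j\ge 0}\mathbf{v}^{j}$ is coefficientwise well defined because $\mathbf{v}^{j}$ has order $\ge j$. Finally, global uniqueness: if $\mathbf{x}\times\mathbf{y}=\mathbf{x}\times\mathbf{y}'=\mathbf{1}$ then $\mathbf{y}=\mathbf{y}\times(\mathbf{x}\times\mathbf{y}')=(\mathbf{y}\times\mathbf{x})\times\mathbf{y}'=\mathbf{1}\times\mathbf{y}'=\mathbf{y}'$.

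The main obstacle is bookkeeping rather than conceptual: one must verify that the candidate inverse really is an admissible element of $\Rzl$, namely semi-infinite with only finitely many infinite (negative-index) components. That is precisely what the factorisation $\mathbf{x}=\epsilon^{\ell}\times\mathbf{u}$ secures, since $\mathbf{u}^{-1}$ comes out as an honest power series and multiplying back by $\epsilon^{-\ell}$ merely shifts all indices by the fixed finite amount $-\ell$. A secondary point worth flagging is locating the leading index $\ell$, which presupposes deciding which coefficients vanish — harmless for concretely presented numbers but tied to the discussion in \Cref{sec:ComputabilityInRZ<} — and of course $\mathbf{0}$ admits no inverse, consistent with the restriction built into axiom M5.
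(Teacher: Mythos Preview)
Your argument is correct and is, in fact, the standard proof that the field of formal Laurent series $\R((\epsilon))$ is a field: shift so that the leading coefficient sits at index $0$, solve recursively for the coefficients of the inverse, then shift back. The bookkeeping you flag --- that $\mathbf{u}^{-1}$ is an honest power series, so $\epsilon^{-\ell}\mathbf{u}^{-1}$ has at most $|\ell|$ negative-index components and hence lies in $\Rzl$ --- is exactly the point that needs checking, and you handle it cleanly. The global uniqueness line is the usual one and is fine.

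The paper's route is organised differently: rather than one general recursion, it works up through a sequence of explicit special cases (\Cref{prop:InverseOf1+Omega}, \Cref{prop:InverseOfEpsilon+Omega}, \Cref{lem:InverseREpsilon+SOmega}, \Cref{lem:InverseRPlusOmega}), computing inverses such as $(\mathbf{1}+\omega)^{-1}$ and $(\omega+\epsilon)^{-1}$ by hand before stating the general theorem. That presentation buys concrete formulas (e.g.\ the alternating series in \Cref{ex:InverseComparison}) and builds intuition for what inverses in $\Rzl$ actually look like, which matters for the later Grossone comparison; your presentation buys brevity and makes the field structure transparent in one stroke. The underlying mechanism --- determining coefficients recursively from the Cauchy-product equations --- is the same, but your packaging via the factorisation $\mathbf{x}=\epsilon^{\ell}\mathbf{u}$ is more economical than the paper's case analysis. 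Your closing remark linking the need to locate $\ell$ with the non-computability of inversion in \Cref{sec:ComputabilityInRZ<} is apt and goes slightly beyond what the paper says at this point.
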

%\begin{proof}
%	The proof of this theorem is done by simulating the process in Propositions \ref{prop:InverseOfEpsilon+Omega} and \ref{prop:InverseOf1+Omega} while using an arbitrary number
%	\begin{center}
%		$\mathbf{x}=\langle x_{-i},x_{-i+1},\dots,x_{-1},\widehat{x_0},x_1,x_2,\dots\rangle$
%	\end{center}
%	and insisting on left-finiteness.
%\end{proof}

\subsubsection{Model for The Second Chunk}

The most evident model for this second chunk is the set of real numbers, $\mathbb{R}$. Thus, so far, we have already had two chunks in $\mathfrak{\widehat{L}}$ and we proved their consistencies by providing a model for each of them.

\subsection{Grossone Theory and The Set $\Rzl$} % ----> put references to more current papers and also mention the Rosinger's paper and "Independence of the grossone-based infinity methodology from non-standard analysis and comments upon logical fallacies in some texts asserting the opposite" from Sergeyev

Theories that contain infinities have always been an issue and have attracted much research, for example \cite{cantor1915contributions,godel2016consistency,hardy2015orders,mayberry2000foundations,robinson1974non}. Note that the arithmetic developed for infinite numbers was quite different with respect to the finite arithmetic that we are used to dealing with. For example, Sergeyev in \cite{sergeyev2013arithmetic} created the Grossone theory. The basic idea of this theory is to treat infinity as an `normal' number, so that our usual arithmetic rules apply. He named this infinite number \textit{Grossone} and denoted it with $\1$. The four axioms that form this Grossone theory and its details can be seen \cite{sergeyev2013arithmetic}.

It is very important to emphasis that \1 is a number, and so it works as a usual number. For example, there exist numbers such as $\1-100,\1^3+16,\ln\1$, and etc. Also for instance, $\1-1<\1$.\footnote{This is unlike the way the usual infinity, $\infty$, behaves where for example, $\infty-1=\infty$. It also differs from how Cantor’s cardinal numbers behave.} The introduction for this new number \1 makes us able to rewrite the set of natural numbers $\mathbb{N}$ as:
\begin{center}
	$\mathbb{N}=\{1,2,3,4,5,6,\dots,\1-2,\1-1,\1\}$.
\end{center}
\noindent Furthermore, adding the Infinite Unit Axiom (IUA) to the axioms of natural numbers will define the set of extended natural numbers $\mathbb{^*N}$:
\begin{center}
	$\mathbb{^*N}=\{1,2,3,4,5,6,\dots,\1-1,\1\,\1+1,\dots,\1^2-1,\1^2,\dots\}$,
\end{center}
and the set $\mathbb{^*Z}$, extended integer numbers, can be defined from there. %Also, the IUA added to the axioms of $\R$ defines the set, $\mathbb{^*{R}}$, of hyperreal numbers.

Here we argued that our new set $\Rzl$ provides the model of Grossone theory and therefore proves its consistency rather in a deftly way.

\begin{definition}
	In our system $\Rzl$, the number \1 is written as:
	\begin{center}
		$\1=\langle1,\widehat{0},0,\dots\rangle$.
	\end{center}
\end{definition}

\begin{proposition}
	For every finite number $\mathbf{r}\in\Rzl$, $\mathbf{r}<\1$.
\end{proposition}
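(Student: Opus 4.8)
The plan is to unpack the definition of a finite number in $\Rzl$ in terms of its semi-infinite sequence representation and then compare it lexicographically against $\1=\langle 1,\widehat{0},0,\dots\rangle$. First I would recall from \Cref{def:NumbersInWidehatR} that $\mathbf{r}$ is finite iff there is some $r\in\R$ with $\abs{\mathbf{r}}<r$; the key structural fact I want to extract from this is that a finite member of $\Rzl$ can have no nonzero entry strictly to the left of its standard-part slot, i.e. if $\mathbf{r}=\langle r_z : z\in\mathbb{Z}\rangle$ with the hat at position $0$, then $r_z=0$ for all $z<0$. Indeed, any nonzero entry at a negative index corresponds (via the $\epsilon^i$ expansion in \Cref{def:AdditionAndMultiplicationInR^<Z}, with $i<0$) to a genuinely infinite contribution, which would force $\abs{\mathbf{r}}$ to exceed every real $r$, contradicting finiteness. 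So the first real step is to prove this ``no-negative-index-support'' characterisation of finiteness, using the Archimedean comparison against reals together with the lexicographic order.

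Next I would simply write $\1=\langle 1,\widehat{0},0,\dots\rangle$, noting its entry at index $-1$ equals $1$ and all entries at indices $\le -2$ equal $0$, while its standard part is $0$. Given a finite $\mathbf{r}=\langle \dots,0,0,\widehat{r_0},r_1,r_2,\dots\rangle$ (all negative-index entries zero, by the previous step), I compare $\mathbf{r}$ and $\1$ in the lexicographical order $\widehat{\le}$: reading from the most significant (leftmost, most negative) index, the first index at which they can possibly differ is $-1$, where $\mathbf{r}$ has $0$ and $\1$ has $1$. Since $0<1$ in $\R$, the lexicographic comparison immediately gives $\mathbf{r}\,\widehat{<}\,\1$. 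One still needs to rule out equality, but $\mathbf{r}=\1$ would require $r_{-1}=1\neq 0$, again contradicting finiteness, so $\mathbf{r}<\1$ in the strict sense of \Cref{def:Operator<>}.

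The main obstacle, and the part deserving the most care, is the first step: pinning down precisely that finiteness in $\Rzl$ is equivalent to having empty support on negative indices. This requires being careful about how $\abs{\cdot}$ is defined on $\Rzl$ (presumably via the order, as $\max(\mathbf{r},-\mathbf{r})$) and how the lexicographic order interacts with comparison to the embedded reals $\langle \widehat{r},0,0,\dots\rangle$; one must check that a number with a nonzero leading entry at a negative index is lexicographically larger in absolute value than every embedded real, and conversely that a number supported only on indices $\ge 0$ is bounded by a suitable real (e.g.\ by $\abs{r_0}+1$). Everything after that is a one-line lexicographic comparison. I would also remark that the companion fact, that every infinite number exceeds $\mathbf{r}$ for every finite $\mathbf{r}$, and in particular relates to $\1$, follows by the same support analysis, though only the stated direction is needed here.
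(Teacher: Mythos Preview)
Your proposal is correct and follows the same route as the paper: represent a finite $\mathbf{r}$ in its $\Rzl$ form with zero entries at all negative indices, then compare lexicographically with $\1=\langle 1,\widehat{0},0,\dots\rangle$ at index $-1$. The paper's proof is much terser---it simply writes $\mathbf{r}=\langle 0,\widehat{r},0,0,\dots\rangle$ and declares the inequality clear from the lexicographic order---so your careful justification that finiteness forces vanishing negative-index support is more than the paper actually supplies, but the underlying argument is the same.
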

\begin{proof}
	The order in set $\Rzl$ is defined lexicographically. Now suppose that $\mathbf{r}=\langle0,\widehat{r},0,0,\dots\rangle$ where $\widehat{r}\in\R$. Then it is clear that $\mathbf{r}<\1$.
\end{proof}

\begin{proposition}
	All of the equations in the Identity Axiom of Grossone theory are also hold in $\Rzl$.
\end{proposition}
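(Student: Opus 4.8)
The plan is to fix the representation $\1=\langle 1,\widehat{0},0,\dots\rangle$ — so that, in the polynomial-in-$\epsilon$ notation of \Cref{def:AdditionAndMultiplicationInR^<Z}, $\1=\omega=\epsilon^{-1}$ — and then verify each equation of the Identity Axiom separately, namely
\[
0\cdot\1=\1\cdot 0=0,\qquad \1-\1=0,\qquad \frac{\1}{\1}=1,\qquad \1^{0}=1,\qquad 1^{\1}=1,\qquad 0^{\1}=0,
\]
using only the definitions of $+$, $\times$, and multiplicative inverse already established for $\Rzl$.

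First I would deal with the additive and multiplicative identities, which follow directly from \Cref{def:AdditionAndMultiplicationInR^<Z}. Since addition is componentwise, $\1-\1=\langle 1,\widehat{0},0,\dots\rangle+\langle -1,\widehat{0},0,\dots\rangle=\langle 0,\widehat{0},0,\dots\rangle=\mathbf{0}$. For $0\cdot\1=\1\cdot 0=0$, the convolution rule $c_k=\sum_{i+j=k}a_ib_j$ applied with one factor having all coefficients $0$ forces every $c_k=0$ (and the two products agree by commutativity, M2). For $\frac{\1}{\1}=1$ I would invoke \Cref{lem:InverseRPlusOmega} with $r=0$ to know that $\1=\omega$ has a unique inverse in $\Rzl$, and then identify that inverse as $\epsilon=\langle\widehat{0},1,0,\dots\rangle$ via the convolution rule: the only nonzero coefficients are $a_{-1}=1$ and $b_{1}=1$, so $c_{0}=1$ and $c_k=0$ otherwise, i.e. $\omega\times\epsilon=\mathbf{1}$; hence $\frac{\1}{\1}:=\1\times\1^{-1}=\mathbf{1}$.

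It then remains to treat $\1^{0}=1$, $1^{\1}=1$ and $0^{\1}=0$, and here lies the only genuine subtlety: \Cref{def:AdditionAndMultiplicationInR^<Z} equips $\Rzl$ only with $+$ and $\times$, so one must first fix a reading of $\mathbf{x}^{\mathbf{n}}$. The cleanest option is to define $\mathbf{x}^{\mathbf{n}}$, for $\mathbf{n}$ ranging over the image of $\mathbb{^*N}$ inside $\Rzl$, as the iterated product of $\mathbf{n}$ copies of $\mathbf{x}$ — well defined by the associativity and commutativity of $\times$ (\Cref{ax:MultiplicativePropertyOfRHat}) — together with the convention $\mathbf{x}^{0}=\mathbf{1}$ (empty product). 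Under this reading $\1^{0}=\mathbf{1}$ is the empty-product convention, while $1^{\1}=\mathbf{1}$ and $0^{\1}=\mathbf{0}$ reduce to the facts that $\mathbf{1}$ is the multiplicative identity (M4) and $\mathbf{0}$ is absorbing: iterating $\times$ on $\mathbf{1}$ (resp. $\mathbf{0}$) any number of times, finite or grossone-many, changes nothing. Collecting the six checks gives the proposition. The hard part will not be the componentwise or convolution arithmetic, which is routine, but justifying that this ``$\1$-fold product'' is a legitimate operation within $\Rzl$ — equivalently, that the exponentiation appearing in Sergeyev's Identity Axiom admits a coherent interpretation in our model — after which the remaining content is trivial bookkeeping about $\mathbf{0}$ and $\mathbf{1}$.
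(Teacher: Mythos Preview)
Your proposal is correct and follows essentially the same route as the paper: fix $\1=\langle 1,\widehat{0},0,\dots\rangle$ and then verify each of Sergeyev's identity equations ($0\cdot\1=0$, $\1-\1=0$, $\tfrac{\1}{\1}=1$, $\1^{0}=1$, $1^{\1}=1$, $0^{\1}=0$) one at a time by direct computation with the componentwise addition and convolution multiplication of \Cref{def:AdditionAndMultiplicationInR^<Z}. Your explicit flagging of the exponentiation issue --- that $\mathbf{x}^{\1}$ is not \emph{a priori} defined by the ring operations and must be read via an iterated-product/empty-product convention --- is, if anything, more scrupulous than the paper, which treats those three identities as immediate once $\1$ is declared to behave as an ordinary number.
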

%\begin{proof}
%	Note that in $\Rzl$, $0$ and $1$ are written as $\mathbf{0}=\langle\widehat{0},0,\dots\rangle$ and $\textbf{1}=\langle\widehat{1},0,\dots\rangle$ respectively. Then by the arithmetic defined in $\Rzl$:
%	\begin{enumerate}
%		\item we have $\1-\1=\langle1,\widehat{0},0,\dots\rangle-\langle1,\widehat{0},0,\dots\rangle=\langle\widehat{0},0,\dots\rangle=\mathbf{0}$,
%		\item by treating multiplication like in $\R$ we have $\1\cdot\mathbf{0}=\langle0,\widehat{0},0,\dots\rangle=\mathbf{0}$ from
%		\begin{center}
%			\begin{tabular}{ccc}
%				& 1 & 0\\
%				$\times$ & & 0\\
%				\hline
%				& & 0\\
%				$+$ & 0 &\\  
%				\hline\vspace{2em}
%				& 0 & 0\\
%			\end{tabular}
%		\end{center}
%		and $\mathbf{0}\cdot\1=\langle0,\widehat{0},0,\dots\rangle=\mathbf{0}$ with the same reasoning, and so $\1\cdot\mathbf{0}=\mathbf{0}\cdot\1$ from since $\1\cdot\mathbf{0}-\mathbf{0}\cdot\1=\mathbf{0}$ and Lemma \ref{lem:=}.
%		\item we have $\1^0=\textbf{1}$ by Definition \ref{def:zeroPower} and by the usual multiplication $\textbf{1}^\1=\textbf{1}$,
%		\item By definition in our set $\Rzl$, $\frac{1}{\1}=\frac{1}{\omega}=\epsilon=\omega^{-1}=\1^{-1}$ and so, $\frac{\1}{\1}=\1\cdot\1^{-1}=1$.
%	\end{enumerate}
%\end{proof}

\noindent The fractional form of $\1$ %(Definition \ref{ax:gorssone_divisibility})
can also be defined in $\Rzl$ as:
\begin{center}
	for any $n\in\mathbb{N}$, $\frac{\1}{n}=\langle\frac{1}{n},\widehat{0},0,\dots\rangle$.
\end{center}

\noindent Speaking about the inverse, one of the advantages of having the set $\Rzl$ is to be able to see what the inverse of a number looks like, not like in the Grossone theory. See Example \ref{ex:InverseComparison} for more details.

\begin{example}
	\label{ex:InverseComparison}
	In Grossone theory, the inverse of $\frac{1}{\1}+\1$ is just $\frac{1}{\frac{1}{\1}+\1}$. While in our set $\Rzl$, $\frac{1}{\1}+\1$ is written as $\epsilon+\omega$ and its inverse is
	\begin{center}
		$\langle\widehat{0},1,0,-1,0,1,0,-1,0,\dots\rangle=\epsilon-\epsilon^3+\epsilon^5-\epsilon^7+\dots$.
	\end{center}
	In other words, the more explicit form of $\frac{1}{\frac{1}{\1}+\1}$ is a series $(-1)^{n+1}\1^{-(2n-1)}$ for $n=1,2,3,\dots\in\mathbb{N}$.
\end{example}

In \cite{lolli2015metamathematical}, Gabriele Lolli analysed and built a formal foundation of the Grossone theory based on Peano's second order arithmetic. He also gave a slightly different notion of some axioms that Sergeyev used. One of the important theorems in Lolli's paper is the proof that Grossone theory -- or at least his version of it -- is consistent. However, as he said also in \cite{lolli2015metamathematical}, `The statement of the theorem is of course conditional, as apparent from the proof, upon the consistency of $\text{PA}_\mu^2$' while `its model theoretic proof is technically rather demanding'. 

Thus, through what presented in this subsection, we have proposed a new way to prove the consistency of Grossone theory by providing a \textit{straightforward} model of it. There is no need for complicated model-theoretic proofs. The set $\Rzl$ is enough to establish the consistency of Grossone theory in general. Moreover, the development in the next sessions can also be seen, at least in part, as a contribution to the development of Grossone theory.

\section{Topology on The Set $\Rzl$}
\label{sec:TopologyOnRZ<}

Some topological properties of the set $\Rzl$ are discussed in this section. However, there are a number of definitions and issues that should be addressed first in order to understand how those properties will be applied to our set properly.

\subsection{Metrics in $\Rzl$}

We defined what is meant by a distance (metric) between each pair of elements of $\Rzl$.

\begin{definition}
	\label{def:metric}
	A \textit{metric} $\rho$ in a set $X$ is a function
	\begin{align*}
		\rho:X\times X\rightarrow\left[0,\infty\right)
	\end{align*}
	where for all $x,y,z\in X$, these four conditions are satisfied:
	\begin{enumerate}
		\item $\rho(x,y)\geq0$,
		\item  $\rho(x,y)=0$ if and only if $x=y$,
		\item $ \rho(x,y)=d(y,x)$,
		\item $\rho(x,z)\leq \rho(x,y)+\rho(y,z)$.
	\end{enumerate}
	When that function $\rho$ satisfies all of the four conditions above except the second one, $\rho$ is called a \textit{pseudo-metric}\footnote{It is not without reason that we introduced the concept of the pseudo-metric here. This kind of metric will make sense when we are in $\R$. For example, the distance between $0$ and $\epsilon$ is $0$ as our lens is not strong enough to distinguish those two numbers in $\R$.} on $X$. 
\end{definition}

Now we define two functions $\dis$ and $\diss$ in $\Rzl$ as follows:

\begin{definition}
	\label{def:metricAndPseudoMetric}
	For all $\mathbf{x},\mathbf{y}\in\Rzl$,
	\begin{align*}
		\dis:\Rzl\times\Rzl\rightarrow\Rzl \textnormal{ and } \diss:\Rzl\times\Rzl\rightarrow\mathbb{R}
	\end{align*}
	where $\dis(\mathbf{x},\mathbf{y})=\abs{\mathbf{y}-\mathbf{x}}$ and $\diss(\mathbf{x},\mathbf{y})=\texttt{St}(\abs{\mathbf{y}-\mathbf{x}})$.
\end{definition}

It can be easily verified that $\dis$ is a metric in $\Rzl$ (and so $\left(\Rzl,\dis\right)$ forms a metric space) and $\diss$ is a pseudo-metric in $\Rzl$ (and so $\left(\Rzl,\diss\right)$ forms a pseudo-metric space). 

\subsection{Balls and Open Sets in $\Rzl$}
Now that we have the notion of distance in $\Rzl$, we can define what it means to be an open set in $\Rzl$ by first defining what a \textit{ball} is in $\Rzl$.

\begin{definition}
	A \textit{ball} of radius $\mathbf{y}$ around the point $\mathbf{x}\in\Rzl$ is
	\begin{align*}
		B_{\mathbf{x}}(\mathbf{y})=\{\mathbf{z}\in\Rzl\mid d_1(\mathbf{x},\mathbf{z})<\mathbf{y}\},
	\end{align*}
	where $d_1(\mathbf{x},\mathbf{y})$ is either $\dis(\mathbf{x},\mathbf{y})$ or $\diss(\mathbf{x},\mathbf{y})$.
\end{definition}

\noindent We require this additional definition in order to set forth our explanation about balls properly:

\begin{definition}
	\label{def:delta}
	The sets $\Delta^m$ and $\Delta^{\underdownarrow{m}}$ are defined as follows:
	\begin{center}
		$\Delta^m=\{\mathbf{x}:\mathbf{x}=a_m\mathbf{\epsilon}^m\}$ and $\Delta^{\underdownarrow{m}}=\bigcup\limits_{n\geq m}\Delta^n$,
	\end{center}
	where $m\in\mathbb{N}\cup\{0\}$, $a_m\in\R$ and $a_m\neq0$ whenever $m\geq1$. 
\end{definition}

We have to be careful here as unlike in classical topology, there are different notions of balls that can be described as follows. The \textit{first} possible notion of balls is when we use $\dis$ as our metric and having $\mathbf{y}>0$ as our radius. In this case, in $\Rzl$, the ball around a point $\mathbf{x}$ with $\mathbf{y}$ radius is an interval $(\mathbf{x}-\mathbf{y},\mathbf{x}+\mathbf{y})$. Note that by using $\mathbf{y}$ as the radius, beside having the usual balls with ``real'' radius (\textit{St-balls}) (that is when $\mathbf{y}=r\in\R$), we also have some infinitesimally small balls (\textit{e-balls}) when $\mathbf{y}=\einf\in\Delta^{\underdownarrow{m}}$ for any given $m$. The \textit{second} possible notion is while we use the same metric $\dis$, we have $\nicefrac{1}{n}$ for some $n\in\mathbb{N}$ as its radius. This produces balls (\textit{rat-balls}) in the form of $\left(\mathbf{x}-\nicefrac{1}{n},\mathbf{x}+\nicefrac{1}{n}\right)$. The \textit{third} possibility is by using $\diss$ as our metric. In this case, interestingly, the balls around a point $\mathbf{x}$ with $\nicefrac{1}{n}$ radius will be in the form of the following set: 
\begin{center}
	$\{\mathbf{y}\mid\texttt{St}(\mathbf{y})\in(\texttt{St}(\mathbf{x})-\nicefrac{1}{n},\texttt{St}(\mathbf{x})+\nicefrac{1}{n})\}$.
\end{center}

\noindent We call this kind of balls as \textit{psi-balls}. See Table \ref{table:ballsinRandRz<} for the summary of these possibilities of balls in our sets.

\begin{table}
%	\centering
	\caption{Some types of balls both in $\R$ and $\Rzl$}
	\begin{tabular}{ |l|l|l| }
		\hline
		\multicolumn{3}{|c|}{Balls in $\R$ and $\Rzl$} \\
		\hline
		The Set & The Metric & The Form of The Balls\\
		\hline
		& &\\
		$\R$   & $\rho(x,y)$\hspace{0.29cm}$=\abs{x-y}$ & $B_x(r)$\hspace{0.33cm}$=(x-r,x+r)$\\
		$\Rzl$&   $\dis(\mathbf{x},\mathbf{y})$\hspace{0.26cm}$=\abs{\mathbf{x}-\mathbf{y}}$ & $B_\mathbf{x}(\mathbf{r})$\hspace{0.32cm}$=(\mathbf{x}-\mathbf{r},\mathbf{x}+\mathbf{r})$\\
		$\Rzl$ & $\dis(\mathbf{x},\mathbf{y})$\hspace{0.26cm}$=\abs{\mathbf{x}-\mathbf{y}}$ & $B_\mathbf{x}(\nicefrac{1}{n})=(\mathbf{x}-\nicefrac{1}{n},\mathbf{x}+\nicefrac{1}{n})$\\
		$\Rzl$  & $\diss(\mathbf{x},\mathbf{y})=\texttt{St}(\abs{\mathbf{x}-\mathbf{y}})$ &
		$B_\mathbf{x}(\nicefrac{1}{n})=B_{\texttt{St}(\mathbf{x})}(\nicefrac{1}{n})=\{\mathbf{y}\mid\texttt{St}(\mathbf{y})\in$\\
		& & \hspace{1.15cm}$\quad(\texttt{St}(\mathbf{x})-\nicefrac{1}{n},\texttt{St}(\mathbf{x})+\nicefrac{1}{n})\}$\\
		& &\\
		\hline
	\end{tabular}
	\label{table:ballsinRandRz<}
\end{table}
	
\begin{remark}
	In $\R$, the $\einf$-ball does not exist, whereas in $\Rzl$ there are infinitely many $\einf$-balls around every point there.
\end{remark}

Finally, we defined what it means to be an open set in $\Rzl$. Notice that because we had two notions of ball in our set, i.e. \texttt{St}-balls and $\einf$-ball, it led us to two different notions of openness as follows.

\begin{definition}[\texttt{St}-open]
	\label{def:St-open}
	A subset $O\subseteq\Rzl$ is \textit{St-open} iff
	\begin{center}
		$\forall x\in O$ $\exists n\in\mathbb{N}\textnormal{ s.t. }B_x\left(\frac{1}{n}\right)\subseteq O$.
	\end{center}
\end{definition}

\begin{definition}[$\einf$-open]
	\label{def:e-open}
	A subset $O\subseteq\Rzl$ is \textit{e-open} iff
	\begin{center}
		$\forall x\in O$ $\exists\einf\in\Delta^{\underdownarrow{m}}\textnormal{ s.t. }B_x(\einf)\subseteq O$.
	\end{center}
\end{definition}

\noindent Remember that the set $\Delta^{\underdownarrow{m}}$ is defined in Definition \ref{def:delta}.

\begin{example}
	The interval $(\textbf{2},\textbf{3})$ in $\Rzl$ is \texttt{St}-open and also $\einf$-open.
\end{example}

\begin{example}
	\label{ex:nopenisnotequivwitheinfopen}
	The interval $(\textbf{0},\mathbf{\einf})$ in $\Rzl$ is $\einf$-open, but not \texttt{St}-open.
\end{example}

Example \ref{ex:nopenisnotequivwitheinfopen} gives us the theorem below:

%\begin{theorem}
%	For any set $U\subseteq\Rzl$,
%	\begin{align*}
%		\Rzl\models (U \textit{ is \texttt{St}-open}\not\leftrightarrow U \textit{ is } \einf \textit{-open}).
%	\end{align*}
%\end{theorem}

%%%%%%%%%%%%%%%%%%% change for paper %%%%%%%%%%%%%%%%%%%
\begin{theorem}
	For any set $U\subseteq\Rzl$,
	\begin{align*}
	\Rzl\not\models \textit{If } U \textit{ is \texttt{St}-open, then } U \textit{ is } \einf \textit{-open}.
	\end{align*}
\end{theorem}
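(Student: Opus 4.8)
The plan is to prove this by exhibiting a single counterexample set $U$ that is \texttt{St}-open but not $\einf$-open, since the claim is a statement of non-validity ($\not\models$): we only need one model-instance where the implication fails. The natural candidate is exactly the set from Example \ref{ex:nopenisnotequivwitheinfopen}, namely the open interval $U=(\mathbf{0},\mathbf{\einf})$ in $\Rzl$, where $\einf$ is some fixed infinitesimal, say $\einf=\mathbf{\epsilon}\in\Delta^{\underdownarrow{1}}$. So the proof reduces to verifying two things: that $(\mathbf{0},\mathbf{\epsilon})$ is \texttt{St}-open, and that it is not $\einf$-open.

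First I would establish \texttt{St}-openness. By Definition \ref{def:St-open} I must show that for every $\mathbf{x}\in(\mathbf{0},\mathbf{\epsilon})$ there is $n\in\mathbb{N}$ with $B_{\mathbf{x}}(\nicefrac{1}{n})\subseteq(\mathbf{0},\mathbf{\epsilon})$. Here the key observation is that $\nicefrac{1}{n}$ is itself an appreciable (real) number, so the ball $B_{\mathbf{x}}(\nicefrac{1}{n})=(\mathbf{x}-\nicefrac{1}{n},\mathbf{x}+\nicefrac{1}{n})$ has ``real'' radius; meanwhile any $\mathbf{x}$ strictly between $\mathbf{0}$ and $\mathbf{\epsilon}$ is itself infinitesimal (it is squeezed between $\mathbf{0}$ and an infinitesimal in the lexicographic order, so its standard part is $0$ and it has no appreciable component). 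Hence for \emph{any} choice of $n$, the ball $B_{\mathbf{x}}(\nicefrac{1}{n})$ actually contains all of $(\mathbf{0},\mathbf{\epsilon})$ and much more --- but crucially it is \emph{not} contained in $(\mathbf{0},\mathbf{\epsilon})$, because $\mathbf{x}+\nicefrac{1}{n}$ is appreciable and thus lies outside. This shows $(\mathbf{0},\mathbf{\epsilon})$ is in fact \emph{not} \texttt{St}-open either, which would break the counterexample. So I need to re-read Example \ref{ex:nopenisnotequivwitheinfopen}: evidently the intended reading of \texttt{St}-open must be that the relevant interval near a point should be coverable, and indeed $(\mathbf{0},\mathbf{\epsilon})$ as written is $\einf$-open but not \texttt{St}-open --- which is the \emph{reverse} of what a counterexample to ``\texttt{St}-open $\Rightarrow\einf$-open'' needs. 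Therefore the correct counterexample should be a set that \emph{is} \texttt{St}-open but fails to be $\einf$-open; a good candidate is a ``thick'' interval around a real point such as $U=(\mathbf{2},\mathbf{3})$ intersected or modified so that at some boundary-like infinitesimal scale no $\einf$-ball fits --- more precisely, take $U$ to be an \texttt{St}-open set that contains a point $\mathbf{x}$ together with an entire $\nicefrac{1}{n}$-neighbourhood but such that removing a single infinitesimally-displaced point still leaves it \texttt{St}-open while destroying $\einf$-openness at that point.

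Concretely, I would take $U=(\mathbf{2},\mathbf{3})\setminus\{\mathbf{2}+\mathbf{\epsilon}\}$, or more simply argue from the definitions directly. The point $\mathbf{x}=\mathbf{2}$ lies in $U$ (assuming we arrange it does) and has a full real-radius ball inside $U$ in the \texttt{St} sense because \texttt{St}-openness only requires a $\nicefrac{1}{n}$-ball modulo the coarse resolution; but at the point $\mathbf{2}-\mathbf{\epsilon}$, say, every $\einf$-ball $B_{\mathbf{2}-\mathbf{\epsilon}}(\einf')$ with $\einf'\in\Delta^{\underdownarrow{m}}$ of sufficiently large order will be forced to contain $\mathbf{2}+\mathbf{\epsilon}$ once $\einf'$ dominates $\epsilon$ in order --- no wait, this requires care with the order of $\einf'$. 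The cleanest approach: let $U$ be \texttt{St}-open by construction (e.g. a union of rat-balls), then show that at some $\mathbf{x}\in U$ whose only ``breathing room'' is of appreciable size $\nicefrac1n$ but where the set excludes all points at distance exactly some fixed infinitesimal $\mathbf{\epsilon}^{k}$, no $\einf$-ball can fit because $\einf$-balls of order $\geq m$ for the relevant $m$ are all ``too big'' in the lexicographic sense to avoid the excluded infinitesimal shell, while being ``too small'' to escape into the appreciable region. I would formalise exactly one such $U$ and check both properties from Definitions \ref{def:St-open} and \ref{def:e-open}.

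The main obstacle will be pinning down the interplay between the lexicographic order and the two families of radii: a $\nicefrac1n$-ball has appreciable radius, an $\einf$-ball has infinitesimal radius from $\Delta^{\underdownarrow{m}}$, and these families are \emph{not} cofinal/coinitial in each other, so ``\texttt{St}-open'' and ``$\einf$-open'' genuinely diverge --- the subtlety is that \texttt{St}-openness is the \emph{weaker} coarse-grained condition (it only needs to see things up to real resolution), so counterintuitively many sets are \texttt{St}-open without being $\einf$-open, which is precisely why the implication fails. Once the right $U$ is fixed, both verifications are short: \texttt{St}-openness follows because every point of $U$ has an appreciable-radius ball inside $U$ modulo infinitesimal corrections (which \texttt{St} cannot detect), and failure of $\einf$-openness follows by exhibiting one point $\mathbf{x}_0\in U$ and showing every $\einf$-ball about it pokes outside $U$. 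I would present the chosen $U$ explicitly, cite Example \ref{ex:nopenisnotequivwitheinfopen} as motivation, and close with the observation that this single failing instance establishes $\Rzl\not\models(\textit{\texttt{St}-open}\Rightarrow\einf\textit{-open})$.
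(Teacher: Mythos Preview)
You correctly spot the mismatch: Example~\ref{ex:nopenisnotequivwitheinfopen} exhibits a set that is $\einf$-open but \emph{not} \texttt{St}-open, which is the converse of what the theorem (as literally stated) asks for. The paper's entire proof is the sentence ``Example~\ref{ex:nopenisnotequivwitheinfopen} gives us the theorem below''---so the paper is using $(\mathbf{0},\mathbf{\epsilon})$ as its witness. The theorem statement almost certainly has the two notions swapped; what is actually proved (and provable) is that $\einf$-open does not imply \texttt{St}-open.

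Your subsequent search for a set that is \texttt{St}-open but not $\einf$-open is doomed, and this is the real gap in your proposal. The implication \texttt{St}-open $\Rightarrow$ $\einf$-open is \emph{valid}: if $B_{\mathbf{x}}(\nicefrac{1}{n})\subseteq U$, then for any infinitesimal $\einf\in\Delta^{\underdownarrow{m}}$ we have $\einf<\nicefrac{1}{n}$ in the lexicographic order, hence $B_{\mathbf{x}}(\einf)\subseteq B_{\mathbf{x}}(\nicefrac{1}{n})\subseteq U$. So every \texttt{St}-open set is automatically $\einf$-open. Your candidate $U=(\mathbf{2},\mathbf{3})\setminus\{\mathbf{2}+\mathbf{\epsilon}\}$ illustrates the failure mode: at the point $\mathbf{2}+2\mathbf{\epsilon}\in U$, every ball $B_{\mathbf{2}+2\mathbf{\epsilon}}(\nicefrac{1}{n})$ contains the deleted point $\mathbf{2}+\mathbf{\epsilon}$ (since $\mathbf{\epsilon}<\nicefrac{1}{n}$), so $U$ is not \texttt{St}-open either. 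Any set you puncture at infinitesimal scale will lose \texttt{St}-openness for the same reason.

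The right move is simply to note that the theorem's implication is written in the wrong direction, and that the intended content---which Example~\ref{ex:nopenisnotequivwitheinfopen} does establish---is $\Rzl\not\models(\einf\text{-open}\Rightarrow\texttt{St}\text{-open})$.
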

%%%%%%%%%%%%%%%%%%%%%%%%%%%%%%%%%%%%%%%%%%%%%%%%%%%

Using the two definition of openness given in Definitions \ref{def:St-open} and \ref{def:e-open}, we defined what it means by two points are topologically distinguishable. There are also two different notions of distinguishable points as can be seen in Definitions \ref{def:St-distinguishable} and \ref{def:e-distinguishable}.

\begin{definition}[\texttt{St}-distinguishable]
	\label{def:St-distinguishable}
	Any two points in $\Rzl$ are \textit{St-distinguishable} if and only if there is a \texttt{St}-open set containing precisely one of the two points.
\end{definition}

\begin{definition}[$\einf$-distinguishable]
	\label{def:e-distinguishable}
	Any two points in $\Rzl$ are \textit{e-distinguishable} if and only if there is an $\einf$-open set containing precisely one of the two points.
\end{definition}

\subsection{Topological Spaces}

\begin{definition}
	\label{def:topology}
	Let $X$ be a non-empty set and $\uptau$ a collection of subsets of $X$ such that:
	\begin{enumerate}
%		\item[T1.] $X\in\uptau$
%		\item[T2.] $\emptyset\in\uptau$
%		\item[T3.] If $O_1,O_2,\dots,O_n\in\uptau$, then  $\bigcap_{k=1}^nO_k\in\uptau$
%%		\item[T4.] If $O_1,O_2,\dots,O_n\in\uptau$, then $\bigcup_{k=1}^\infty O_k\in\uptau$
%		\item[T4.] If $O_\alpha\in\uptau$ for all $\alpha\in A$, then $\bigcup_{\alpha\in A} O_\alpha\in\uptau$
		\item $X\in\uptau$,
		\item $\emptyset\in\uptau$,
		\item If $O_1,O_2,\dots,O_n\in\uptau$, then  $\bigcap_{k=1}^nO_k\in\uptau$,
		%		\item[T4.] If $O_1,O_2,\dots,O_n\in\uptau$, then $\bigcup_{k=1}^\infty O_k\in\uptau$
		\item If $O_\alpha\in\uptau$ for all $\alpha\in A$, then $\bigcup_{\alpha\in A} O_\alpha\in\uptau$.
	\end{enumerate}
	The pair of objects $(X,\uptau)$ is called a \textit{topological space} where $X$ is called the \textit{underlying set}, the collection $\uptau$ is called the \textit{topology} in $X$, and the members of $\uptau$ are called \textit{open sets}.
\end{definition}

Note that if $\uptau$ is the collection of open sets of a metric space $(\mathcal{X},\rho)$, then $(\mathcal{X},\uptau)$ is a \textit{topological metric space}, i.e. a topological space associated with the metric space $(X,\rho)$. There are at least three interesting topologies in $\Rzl$ as can be seen in Definition \ref{def:topologiesonRz<} below.

\begin{definition}
	\label{def:topologiesonRz<}
	The standard topology $\uptau_\textnormal{St}$ on the set $\Rzl$ is the topology generated by all unions of \texttt{St}-balls. The $\einf$-topology in $\Rzl$, $\uptau_\einf$, is the topology generated by all unions of $\einf$-balls and the third topology in $\Rzl$ is pseudo-topology, $\uptau_\psi$, when it is induced by $\diss$.
\end{definition}

\begin{axiom}
	\label{axiom:RzformstopologicalMetricSpace}
	$(\Rzl,\uptau_n),$ $(\Rzl,\uptau_\einf),$ and $(\Rzl,\uptau_\psi)$ form topological metric space with $\dis$ as their metrics (for the first two) and $\diss$ for the third one.
\end{axiom}

\begin{theorem}
	$(\Rzl,\uptau_n)$ is not a Hausdorff space but it is a preregular space.\footnote{Hausdorff space and preregular space are defined as usual.}
\end{theorem}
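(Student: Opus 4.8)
The plan is to disprove the Hausdorff property by exhibiting two distinct points of $\Rzl$ that cannot be separated by disjoint $\uptau_n$-open sets, and then to verify preregularity directly against its usual definition (any two topologically distinguishable points can be separated by disjoint neighbourhoods). First I would take the witnesses $\mathbf{0}$ and $\mathbf{\epsilon}$, and more generally any point $\mathbf{x}$ together with $\mathbf{x}\widehat{+}\mathbf{\epsilon}$. Since $\uptau_n$ (the standard topology $\uptau_{\textnormal{St}}$) is generated by unions of \texttt{St}-balls, and a \texttt{St}-ball $B_{\mathbf x}(\nicefrac1n)$ has radius $\nicefrac1n$ with $n\in\mathbb N$, every basic \texttt{St}-open set around $\mathbf{0}$ contains the whole interval $(\mathbf{0}-\nicefrac1n,\mathbf{0}+\nicefrac1n)$, which contains $\mathbf{\epsilon}$ because $|\mathbf{\epsilon}|=\epsilon<\nicefrac1n$ for every $n$ (this is exactly the infinitesimal condition of \Cref{def:NumbersInWidehatR}). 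Hence every \texttt{St}-open set that contains $\mathbf 0$ also contains $\mathbf{\epsilon}$, so no two \texttt{St}-open sets separating them can be disjoint — in fact $\mathbf 0$ and $\mathbf{\epsilon}$ are not even \texttt{St}-distinguishable (cf. \Cref{def:St-distinguishable}). This kills Hausdorffness.

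For preregularity I would argue that the only obstruction to separation is precisely this infinitesimal-clustering phenomenon, and that it is consistent with preregularity by design. The key observation is that two points $\mathbf x=\langle x_z\rangle$ and $\mathbf y=\langle y_z\rangle$ are \texttt{St}-distinguishable if and only if their standard parts differ, i.e. $\texttt{St}(\mathbf x)\neq\texttt{St}(\mathbf y)$; equivalently $\dis(\mathbf x,\mathbf y)$ is appreciable, not infinitesimal. I would prove this by noting that if $\texttt{St}(\mathbf x)\neq\texttt{St}(\mathbf y)$ then $|\texttt{St}(\mathbf x)-\texttt{St}(\mathbf y)|>\nicefrac2n$ for some $n\in\mathbb N$ (Archimedean property in $\R$), so the \texttt{St}-balls $B_{\mathbf x}(\nicefrac1n)$ and $B_{\mathbf y}(\nicefrac1n)$ are disjoint intervals and each is \texttt{St}-open; conversely, if $\texttt{St}(\mathbf x)=\texttt{St}(\mathbf y)$ then the argument of the previous paragraph shows every \texttt{St}-open set containing one contains the other. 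Thus whenever two points are topologically distinguishable they are separated by disjoint \texttt{St}-open neighbourhoods, which is exactly preregularity.

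The main obstacle I anticipate is pinning down the correct characterisation of \texttt{St}-distinguishability — in particular making sure that the lexicographic order underlying the balls interacts with the \texttt{St}-radii as claimed, and that ``every \texttt{St}-open set containing $\mathbf x$ contains every point with the same standard part'' is genuinely forced (one must check it against arbitrary unions of \texttt{St}-balls, not just a single ball, but this is immediate since a union contains a set containing $\mathbf x$ only if one of its members does). A secondary point of care is the precise definition of preregular space being used (the footnote says ``defined as usual''): I would adopt the standard one — topologically distinguishable points have disjoint neighbourhoods, equivalently the space is $R_1$ — and the argument above yields exactly that. Once the distinguishability characterisation is established, both halves of the theorem follow with essentially no further computation.
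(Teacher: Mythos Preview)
Your approach matches the paper's: exhibit two infinitesimally close but distinct points to kill Hausdorffness, then argue that \texttt{St}-distinguishable points can always be separated by disjoint \texttt{St}-balls. Your witnesses $\mathbf{0}$ and $\mathbf{\epsilon}$ are in fact cleaner than the paper's choice of $\mathbf{\epsilon}$ and $\mathbf{\epsilon}+\mathbf{1}$ (whose distance is $1$, not infinitesimal, so that pair actually \emph{is} separable; your pair is the right kind of example). One refinement to the obstacle you flagged: the correct characterisation of \texttt{St}-distinguishability is not ``$\texttt{St}(\mathbf x)\neq\texttt{St}(\mathbf y)$'' but ``$\dis(\mathbf x,\mathbf y)$ is not infinitesimal'' --- two points with equal standard part but different $\omega$-parts (e.g.\ $\boldsymbol{\omega}$ and $2\boldsymbol{\omega}$) are also \texttt{St}-distinguishable and separable. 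With that adjustment your separation argument goes through unchanged, since a non-infinitesimal distance always dominates some $\nicefrac{2}{n}$.
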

\begin{proof}
	$\Rzl$ does not form a Hausdorff space because under the topology $\uptau_\textnormal{St}$, there are two distinct points, $\mathbf{\epsilon}=\langle \widehat{0},1\rangle$ and $\mathbf{\epsilon}+\textbf{1}=\langle\widehat{1},1\rangle$ for example, which are not neighbourhood-separable. It is impossible to separate those two points with \texttt{St}-ballss as $\nicefrac{1}{n}>\einf$ for every $n\in\mathbb{N}$ and $\einf\in\Delta^{\underdownarrow{m}}$. However, it is a preregular space as every pair of two \texttt{St}-distinguishable points in $\Rzl$ can be separated by two disjoint neighbourhoods. This follows directly from Definition \ref{def:St-distinguishable}.
\end{proof}

\begin{theorem}
	$(\Rzl,\uptau_\einf)$ is a non-connected space and it forms a Hausdorff space.
\end{theorem}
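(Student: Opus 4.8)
The plan is to prove the two assertions separately, both resting on one elementary observation about $\Rzl$ that I would isolate first: for every $\mathbf{d}\in\Rzl$ with $\mathbf{d}>0$ there is a positive integer $p$ with $\mathbf{\epsilon}^p<\mathbf{d}$ (and in fact $2\mathbf{\epsilon}^p<\mathbf{d}$). To see this I would use that members of $\Rzl$ are in semi-infinite normal form, so $\mathbf{d}$ has a \emph{least} index $k\in\mathbb{Z}$ with nonzero coefficient; since $\mathbf{d}>0$ in the lexicographic order that coefficient is positive, and then $p=\max(k+1,1)$ works, because the leading term of $\mathbf{\epsilon}^p$ (and of $2\mathbf{\epsilon}^p$) sits strictly to the right of the leading term of $\mathbf{d}$. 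Throughout I would use freely that the text establishes $\dis$ to be a metric on $\Rzl$ (so the triangle inequality holds, with values in $\Rzl$), that each $\einf$-ball is a basic open set of $\uptau_\einf$ of the form $(\mathbf{x}-\einf,\mathbf{x}+\einf)$ with $\einf\in\Delta^{\underdownarrow{1}}$, and that $\mathbf{\epsilon}<\mathbf{1}$ while sums of infinitesimals (resp.\ of finite numbers) stay infinitesimal (resp.\ finite).

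For \textbf{Hausdorffness}, given distinct $\mathbf{x},\mathbf{y}\in\Rzl$ I would set $\mathbf{d}=\dis(\mathbf{x},\mathbf{y})>0$ (positivity from Definition \ref{def:metric}), choose $p$ with $2\mathbf{\epsilon}^p<\mathbf{d}$, and put $\einf=\mathbf{\epsilon}^p\in\Delta^p\subseteq\Delta^{\underdownarrow{1}}$. Then $B_\mathbf{x}(\einf)$ and $B_\mathbf{y}(\einf)$ are $\einf$-open neighbourhoods of $\mathbf{x}$ and $\mathbf{y}$, and they are disjoint: a common point $\mathbf{z}$ would give $\mathbf{d}\le\dis(\mathbf{x},\mathbf{z})+\dis(\mathbf{z},\mathbf{y})<\einf+\einf<\mathbf{d}$. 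Hence $(\Rzl,\uptau_\einf)$ is Hausdorff.

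For \textbf{non-connectedness}, I would exhibit a proper nonempty clopen set, namely the set $F$ of finite elements of $\Rzl$ (so $\mathbf{0}\in F$, $\mathbf{\omega}\notin F$, and $\Rzl\setminus F$ is exactly the set of infinite elements). I would check that $F$ is $\einf$-open: if $\mathbf{x}\in F$, say $\abs{\mathbf{x}}<r$ with $r\in\R$, then for $\mathbf{z}\in B_\mathbf{x}(\mathbf{\epsilon})$ we get $\abs{\mathbf{z}}\le\abs{\mathbf{x}}+\abs{\mathbf{z}-\mathbf{x}}<r+\mathbf{\epsilon}<r+1$, so $B_\mathbf{x}(\mathbf{\epsilon})\subseteq F$. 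Similarly $\Rzl\setminus F$ is $\einf$-open: if $\mathbf{x}$ is infinite then for every $r\in\R$ and every $\mathbf{z}\in B_\mathbf{x}(\mathbf{\epsilon})$ one has $\abs{\mathbf{z}}\ge\abs{\mathbf{x}}-\abs{\mathbf{z}-\mathbf{x}}>(r+1)-\mathbf{\epsilon}>r$, so $B_\mathbf{x}(\mathbf{\epsilon})\subseteq\Rzl\setminus F$. Thus $F$ and $\Rzl\setminus F$ are disjoint nonempty $\einf$-open sets covering $\Rzl$, so the space is not connected. (The monad of any point works equally well in place of $F$.)

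The routine parts are the triangle-inequality manipulations; the one genuinely load-bearing step is the auxiliary fact that below any positive element of $\Rzl$ sits a positive-order infinitesimal, which is where the lexicographic order and the semi-infinite normal form (guaranteeing a well-defined leading term) must be used, and where one must take care that all comparisons happen in the $\Rzl$-valued order rather than in $\R$. I would also note explicitly that $\mathbf{\epsilon}$ is an admissible $\einf$-ball radius, i.e.\ $\mathbf{\epsilon}\in\Delta^1\subseteq\Delta^{\underdownarrow{1}}$, which is immediate from Definition \ref{def:delta}.
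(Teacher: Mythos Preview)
Your proof is correct and follows the same overall strategy as the paper, with minor variations in both halves. For Hausdorffness, the paper simply takes the balls $B_{\mathbf{x}}(\dis(\mathbf{x},\mathbf{y})/2)$ and $B_{\mathbf{y}}(\dis(\mathbf{x},\mathbf{y})/2)$; your version is a bit more careful in that you explicitly shrink the radius to some $\mathbf{\epsilon}^p\in\Delta^{\underdownarrow{1}}$, so your separating neighbourhoods are \emph{basic} $\einf$-balls rather than balls of possibly non-infinitesimal radius (which are still $\einf$-open, but that needs exactly your auxiliary observation to verify). For non-connectedness, the paper uses a different clopen split, essentially $S_1=\{\mathbf{x}\le\mathbf{0}\}\cup\{\mathbf{x}>\mathbf{0}:\mathbf{x}\in\Delta^{\underdownarrow{m}}\}$ versus its complement, whereas you use the finite/infinite partition; both work for the same reason (the boundary between the two pieces cannot be reached by any $\einf$-ball), and your choice has the mild advantage that ``finite'' and ``infinite'' are already defined in the paper and are manifestly complementary by totality of the order. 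Your isolated lemma that every positive element of $\Rzl$ dominates some $\mathbf{\epsilon}^p$ is precisely the fact underpinning both the paper's and your arguments, and making it explicit is a genuine improvement in clarity.
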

\begin{proof}
	We observe that for all $\mathbf{x_0}\in\Rzl$ and $\einf\in\Delta^{\underdownarrow{m}}$, the balls $B_\einf(\mathbf{x_0})$ are $\einf$-open and so is the whole space. To show that $\Rzl$ is not connected, let
	\begin{align*}
		S_1 & =\{\mathbf{x}\in\Rzl\mid (\mathbf{x}\leq\textbf{0})\textnormal{ or } (\mathbf{x}>\textbf{0} \textnormal{ and } \mathbf{x}\in\Delta^{\underdownarrow{m}})\} \textnormal{ and }\\
		S_2 & =\{\mathbf{x}\in\Rzl\mid (\mathbf{x}>\textbf{0})\textnormal{ and } \mathbf{x}\notin\Delta^{\underdownarrow{m}}\}.
	\end{align*}
	The sets $S_1$ and $S_2$ are $\einf$-open, disjoint and moreover, we have that $\Rzl=S_1\cup S_2$ (and so $\Rzl$ is not connected). For any $\mathbf{x},\mathbf{y}\in\Rzl$, $B_x(\nicefrac{\dis(x,y)}{2})$ and $B_y(\nicefrac{\dis(x,y)}{2})$ are open and disjoint. Thus, $\Rzl$ forms a Hausdorff space.
\end{proof}

%\subsection{Basis and Local Bases}
We will now state the usual definition of the basis of a topology $\uptau$.

\begin{definition}
	Let $(X,\uptau)$ be a topological space. A \textit{basis} for the topology $\uptau$ is a collection $\mathcal{B}$ of subsets from $\uptau$ such that every $U\in\uptau$ is the union of some collections of sets in $\mathcal{B}$, i.e.
	\begin{center}
		$\forall U\in\uptau$, $\exists\mathcal{B^*}\subseteq\mathcal{B}$ s.t. $U=\bigcup\limits_{B\in\mathcal{B^*}}B$
	\end{center}
\end{definition}
%
%\begin{example}
%	Consider the set $X={1,2,3,4}$ and this following topology $\uptau_1$ on $X$:
%	\begin{align*}
%		\uptau_1=\{\emptyset,\{1\},\{4\},\{1,4\},\{2,3\},\{1,2,3\},\{2,3,4\},X\}.
%	\end{align*}
%	Now consider the collection of sets $\mathcal{B}=\{\{1\},\{4\},\{2,3\}\}$. Then it can be easily checked that $\mathcal{B}$ is a basis of $\uptau_1$.
%\end{example}

\begin{example}
	On $\R$ with its usual topology, the set $\mathcal{B}=\{(a,b):a<b\}$ is a topological basis.
\end{example}

\begin{definition}
	Let $(X,\uptau)$ be a topological space and let $x\in X$. A local basis of $x$ is a collection of open neighbourhoods of $x$, $\mathcal{B}_x$, such that for all $U\in\uptau$ with $x\in U$, $\exists B\in\mathcal{B}_x$ such that $x\in B\subset U$.
\end{definition}

\begin{definition}
	Let $(X,\uptau)$ be a topological space. Then $(X,\uptau)$ is first-countable if every point $x\in X$ has a countable local basis.
\end{definition}

\begin{definition}
	Let $(X,\uptau)$ be a topological space. Then $(X,\uptau)$ is second-countable if there exists a basis $\mathcal{B}$ of $\uptau$ that is countable.
\end{definition}

\begin{theorem}
	$(\Rzl,\tau_\einf)$ is first countable but not second-countable.\footnote{Note that the space $(\Rzl,\tau_n)$ is still second-countable.}
\end{theorem}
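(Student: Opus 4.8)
The plan is to treat the two halves independently: for first-countability I will exhibit an explicit countable local basis at each point, built from balls whose radii run through the powers of $\mathbf{\epsilon}$; for the failure of second-countability I will extract from $\Rzl$ an uncountable subspace carrying the discrete topology and derive a contradiction from the existence of a countable basis.

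For first-countability, fix $\mathbf{x}\in\Rzl$ and propose $\mathcal{B}_\mathbf{x}=\{B_\mathbf{x}(\mathbf{\epsilon}^k):k\in\mathbb{N}\}$. Each $B_\mathbf{x}(\mathbf{\epsilon}^k)$ is $\einf$-open (this is exactly the observation used in the proof that $(\Rzl,\uptau_\einf)$ is non-connected and Hausdorff) and contains $\mathbf{x}$ since $\dis(\mathbf{x},\mathbf{x})=0<\mathbf{\epsilon}^k$, so $\mathcal{B}_\mathbf{x}$ is a countable family of open neighbourhoods of $\mathbf{x}$. To see it is a local basis, let $U\in\uptau_\einf$ with $\mathbf{x}\in U$; since $U$ is a union of $\einf$-balls, Definition~\ref{def:e-open} supplies a monomial $\einf=a_j\mathbf{\epsilon}^j$ with $a_j\in\R$, $a_j>0$, such that $B_\mathbf{x}(\einf)\subseteq U$. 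The decisive order fact is $\mathbf{\epsilon}^{j+1}<a_j\mathbf{\epsilon}^j$: the first coordinate at which $\mathbf{\epsilon}^{j+1}$ and $a_j\mathbf{\epsilon}^j$ differ is the one of index $j$, where the former carries $0$ and the latter carries $a_j>0$, so the lexicographic order $\widehat{\le}$ puts $\mathbf{\epsilon}^{j+1}$ strictly below $a_j\mathbf{\epsilon}^j$ (for $j=0$ this is the comparison $\widehat{0}<\widehat{a_0}$ of standard parts). Taking $k=j+1\ge1$ then gives $B_\mathbf{x}(\mathbf{\epsilon}^k)=(\mathbf{x}-\mathbf{\epsilon}^k,\mathbf{x}+\mathbf{\epsilon}^k)\subseteq(\mathbf{x}-\einf,\mathbf{x}+\einf)=B_\mathbf{x}(\einf)\subseteq U$, so $\mathcal{B}_\mathbf{x}$ witnesses first-countability.

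For the failure of second-countability, suppose toward a contradiction that $\mathcal{B}$ is a countable basis for $\uptau_\einf$. For each $r\in\R$ put $\mathbf{r}=\langle\widehat{r},0,0,\dots\rangle\in\Rzl$. The set $B_\mathbf{r}(\mathbf{\epsilon})$ is $\einf$-open and contains $\mathbf{r}$, so there is some $B_r\in\mathcal{B}$ with $\mathbf{r}\in B_r\subseteq B_\mathbf{r}(\mathbf{\epsilon})=(\mathbf{r}-\mathbf{\epsilon},\mathbf{r}+\mathbf{\epsilon})$. But for $s\ne r$ the distance $\dis(\mathbf{r},\mathbf{s})=\abs{s-r}$ is an appreciable positive real, hence $\dis(\mathbf{r},\mathbf{s})>\mathbf{\epsilon}$, so $\mathbf{s}\notin B_\mathbf{r}(\mathbf{\epsilon})$. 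Consequently $r\mapsto B_r$ is injective (if $B_r=B_{r'}$ then $\mathbf{r'}\in B_r\subseteq B_\mathbf{r}(\mathbf{\epsilon})$ forces $r'=r$), which embeds $\R$ into the countable set $\mathcal{B}$ --- absurd. Equivalently, $\{\mathbf{r}:r\in\R\}$ inherits the discrete topology and is uncountable, so neither it nor $\Rzl$ can be second-countable.

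The main obstacle is the cofinality step buried in the first-countability argument: verifying that the fixed countable family $\{B_\mathbf{x}(\mathbf{\epsilon}^k)\}$ is indeed cofinal among all $\einf$-balls around $\mathbf{x}$, which reduces entirely to the elementary lexicographic inequality $\mathbf{\epsilon}^{k}<a\mathbf{\epsilon}^{m}$ whenever $k>m$ and $a>0$, proved straight from the definition of $\widehat{\le}$. The remainder is routine manipulation of the definitions of $\einf$-open set and of $\dis$.
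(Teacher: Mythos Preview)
Your argument is correct, and the second-countability half is essentially the paper's own: both exhibit the uncountable, pairwise disjoint family of $\einf$-balls $\{B_{\mathbf{r}}(\mathbf{\epsilon}):r\in\R\}$ and observe that any basis must meet each one in a distinct member.

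For first-countability the paper takes a different route: it simply invokes Axiom~\ref{axiom:RzformstopologicalMetricSpace} together with the slogan ``every metric space is first-countable''. Your explicit construction of the local basis $\{B_{\mathbf{x}}(\mathbf{\epsilon}^k):k\in\mathbb{N}\}$ is more than a cosmetic variant, because the slogan is proved for \emph{real-valued} metrics via the balls $B_x(1/n)$, and those do \emph{not} form a local basis for $\uptau_\einf$ (no $B_{\mathbf{x}}(1/n)$ fits inside $(\mathbf{x}-\mathbf{\epsilon},\mathbf{x}+\mathbf{\epsilon})$). What makes the metric argument go through here is precisely the countable cofinality of $0$ in the positive cone of $\Rzl$, witnessed by $\mathbf{\epsilon}^k\to 0$; your lexicographic inequality $\mathbf{\epsilon}^{j+1}<a_j\mathbf{\epsilon}^j$ is exactly this cofinality check. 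So your approach supplies the content that the paper's one-line appeal leaves implicit, at the cost of a short extra computation.
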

\begin{proof}
From Axiom \ref{axiom:RzformstopologicalMetricSpace} and because every metric space is first-countable, it follows that $(\Rzl,\tau_\einf)$ is first-countable. However, there cannot be any countable bases in $\tau_\einf$ as the uncountably many open sets $O_x=(x-\einf,x+\einf)$ are disjoint.
\end{proof}

\section{Calculus on $\Rzl$}
\label{sec:CalculusOnRZ<}

% **************************** Define Graphics Path **************************

It has been proved previously that the set $\Rzl$ forms a field. Remember that for any $\mathbf{x}\in\Rzl$,
\begin{center}
	$\mathbf{x}=\langle x_{-n},x_{-(n-1)},\dots,x_{-2},x_{-1},\widehat{x},x_1,x_2,x_3,\dots\rangle$
\end{center} 
where
\begin{center}
	\texttt{St}$(\mathbf{x})=\widehat{x}$,\\
	\texttt{Nst}$_\epsilon(\mathbf{x})=\{x_1,x_2,x_3,\dots\}$, and\\
	\texttt{Nst}$_\omega(\mathbf{x})=\{x_{-n},x_{-(n-1)},\dots x_{-1}\}$.
\end{center}
\noindent In other words, for every $\mathbf{x}\in\Rzl$,
\begin{center}
	$\mathbf{x}=$ \texttt{Nst$_\omega$}$(\mathbf{x})$ $+$ \texttt{St}$(\mathbf{x})$ $+$ \texttt{Nst$_\epsilon$}$(\mathbf{x})$.
\end{center}

Note that we can think of \texttt{St}(), \texttt{Nst$_\epsilon$}(), and \texttt{Nst$_\omega$}() as linear functions -- that is for any $\mathbf{x},\mathbf{y}\in\Rzl$ and a constant $c\in\R$,
\begin{center}
	\texttt{St}$(\mathbf{x}+\mathbf{y})=$\texttt{St}$(\mathbf{x})+$\texttt{St}$(\mathbf{y})$,
	\texttt{St}$(c\mathbf{x})=c$\texttt{St}$(\mathbf{x})$,\\
	\texttt{Nst$_\epsilon$}$(\mathbf{x}+\mathbf{y})=$\texttt{Nst$_\epsilon$}$(\mathbf{x})+$\texttt{Nst$_\epsilon$}$(\mathbf{y})$, \texttt{Nst$_\epsilon$}$(c\mathbf{x})=c$\texttt{NSt$_\epsilon$}$(\mathbf{x})$,\\
	\texttt{Nst$_\omega$}$(\mathbf{x}+\mathbf{y})=$\texttt{Nst$_\omega$}$(\mathbf{x})+$\texttt{Nst$_\omega$}$(\mathbf{y})$, and \texttt{Nst$_\omega$}$(c\mathbf{x})=c$\texttt{Nst$_\omega$}$(\mathbf{x})$.\\
	
\end{center}

\begin{definition}
	\label{def:microstablefunctioninR^Z<}
	Suppose that $\texttt{ni}_\epsilon(\mathbf{x})$ denotes the non-infinitesimal part of $\mathbf{x}\in\Rzl$, i.e. $\texttt{ni}_\epsilon(\mathbf{x})=\texttt{Nst}_\omega(\mathbf{x})+\texttt{St}(\mathbf{x})$ and function in $\Rzl$ be defined in the usual way. Then a function $f$ in $\Rzl$ is \textit{microstable} if and only if
	\begin{center}
		$\texttt{ni}_\epsilon(f(x+\epsilon))=\texttt{ni}_\epsilon(f(x))$,
	\end{center}
\end{definition}

\begin{example}
	Suppose that a function $f$ in $\Rzl$ is defined as follows:
	\[ f(\mathbf{x})=
	\begin{cases} 
	1, & \text{if } \texttt{St}(\mathbf{x})>0 \\
	0, & \text{else.}
	\end{cases}
	\]
	Then $f(\mathbf{x})$ is a microstable function.
\end{example}

\begin{theorem}
	Microstability is closed under addition, multiplication, and composition.\footnote{The proof of this theorem can be seen in \cite[p.~73]{nugraha2018naive}.}
\end{theorem}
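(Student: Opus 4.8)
The plan is to verify the three closure properties directly from \Cref{def:microstablefunctioninR^Z<}, using the defining equation $\texttt{ni}_\epsilon(f(\mathbf{x}+\mathbf{\epsilon}))=\texttt{ni}_\epsilon(f(\mathbf{x}))$ for every $\mathbf{x}\in\Rzl$. Since $\texttt{ni}_\epsilon(\mathbf{z})=\texttt{Nst}_\omega(\mathbf{z})+\texttt{St}(\mathbf{z})$ is additive and $\R$-linear (this was recorded just before the definition, as both $\texttt{St}$ and $\texttt{Nst}_\omega$ are linear), the additive case is almost immediate: given microstable $f,g$, for any $\mathbf{x}$ we have $\texttt{ni}_\epsilon((f+g)(\mathbf{x}+\mathbf{\epsilon}))=\texttt{ni}_\epsilon(f(\mathbf{x}+\mathbf{\epsilon}))+\texttt{ni}_\epsilon(g(\mathbf{x}+\mathbf{\epsilon}))=\texttt{ni}_\epsilon(f(\mathbf{x}))+\texttt{ni}_\epsilon(g(\mathbf{x}))=\texttt{ni}_\epsilon((f+g)(\mathbf{x}))$, so $f+g$ is microstable.

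For multiplication I would first isolate a small lemma: writing any $\mathbf{z}\in\Rzl$ as $\mathbf{z}=\texttt{ni}_\epsilon(\mathbf{z})+\texttt{Nst}_\epsilon(\mathbf{z})$, the product $\texttt{ni}_\epsilon(\mathbf{z}\cdot\mathbf{w})$ depends only on $\texttt{ni}_\epsilon(\mathbf{z})$ and $\texttt{ni}_\epsilon(\mathbf{w})$ — intuitively, multiplying by any purely-infinitesimal term raises the $\epsilon$-degree, so it cannot feed back into the coefficients of $\epsilon^k$ for $k\le 0$. This follows from the multiplication formula $c_k=\sum_{i+j=k}a_ib_j$ in \Cref{def:AdditionAndMultiplicationInR^<Z}: if one factor has only terms of index $\ge 1$, then every contribution to $c_k$ for $k\le 0$ must pair it with a term of index $\le -1$ from the other factor, but $\texttt{ni}_\epsilon$ of the second factor is unchanged by altering such terms — more carefully, one shows $\texttt{ni}_\epsilon(\mathbf{z}\cdot\mathbf{w})=\texttt{ni}_\epsilon(\texttt{ni}_\epsilon(\mathbf{z})\cdot\texttt{ni}_\epsilon(\mathbf{w}))$ by expanding $\mathbf{z}\cdot\mathbf{w}$ into four pieces and checking each cross term involving a $\texttt{Nst}_\epsilon$ factor contributes nothing to indices $\le 0$. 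Granting this lemma, microstability of $fg$ is routine: $\texttt{ni}_\epsilon((fg)(\mathbf{x}+\mathbf{\epsilon}))=\texttt{ni}_\epsilon(\texttt{ni}_\epsilon(f(\mathbf{x}+\mathbf{\epsilon}))\cdot\texttt{ni}_\epsilon(g(\mathbf{x}+\mathbf{\epsilon})))=\texttt{ni}_\epsilon(\texttt{ni}_\epsilon(f(\mathbf{x}))\cdot\texttt{ni}_\epsilon(g(\mathbf{x})))=\texttt{ni}_\epsilon((fg)(\mathbf{x}))$.

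For composition, the key observation is that microstability of $g$ says $g(\mathbf{x}+\mathbf{\epsilon})$ and $g(\mathbf{x})$ differ only in their $\texttt{Nst}_\epsilon$ parts, i.e. $g(\mathbf{x}+\mathbf{\epsilon})=g(\mathbf{x})+\boldsymbol{\delta}$ where $\texttt{ni}_\epsilon(\boldsymbol{\delta})=0$, so $\boldsymbol{\delta}\in\Delta^{\underdownarrow{1}}$ is a (possibly higher-order) infinitesimal. I would then want to conclude $\texttt{ni}_\epsilon(f(g(\mathbf{x})+\boldsymbol{\delta}))=\texttt{ni}_\epsilon(f(g(\mathbf{x})))$; this is exactly microstability of $f$ \emph{if} $\boldsymbol{\delta}=\mathbf{\epsilon}$, but in general $\boldsymbol{\delta}$ is an arbitrary purely-infinitesimal element, so the real content is that \Cref{def:microstablefunctioninR^Z<} forces the apparently stronger statement $\texttt{ni}_\epsilon(f(\mathbf{y}+\boldsymbol{\delta}))=\texttt{ni}_\epsilon(f(\mathbf{y}))$ for every $\mathbf{y}$ and every $\boldsymbol{\delta}$ with $\texttt{ni}_\epsilon(\boldsymbol{\delta})=0$. \textbf{This equivalence is the main obstacle}, and I expect it to be handled by noting that $f$ is a polynomial/rational (field) operation built from the coordinate maps, so one reduces to monomials $\mathbf{y}\mapsto\mathbf{y}^k$ and applies the multiplication lemma above to expand $(\mathbf{y}+\boldsymbol{\delta})^k$, discarding every term containing a factor of $\boldsymbol{\delta}$ since each such term lies in $\Delta^{\underdownarrow{1}}$ and hence has trivial $\texttt{ni}_\epsilon$; alternatively, if the intended reading of ``function in $\Rzl$'' already builds in coordinatewise structure, the reduction is even more direct. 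Once the strengthened form is in hand, composition follows in one line, completing the proof.
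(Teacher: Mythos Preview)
The paper does not actually give a proof of this theorem inline --- it defers to \cite[p.~73]{nugraha2018naive} --- so there is nothing to compare against directly, and your argument has to be judged on its own. Addition is fine. But there are genuine gaps in the other two cases.

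\textbf{Multiplication.} Your key lemma, that $\texttt{ni}_\epsilon(\mathbf{z}\cdot\mathbf{w})$ depends only on $\texttt{ni}_\epsilon(\mathbf{z})$ and $\texttt{ni}_\epsilon(\mathbf{w})$, is false once $\omega$-parts are allowed: take $\mathbf{z}=\mathbf{\omega}$ and compare $\mathbf{w}_1=\mathbf{\epsilon}$ with $\mathbf{w}_2=\mathbf{0}$; both have $\texttt{ni}_\epsilon=0$, yet $\texttt{ni}_\epsilon(\mathbf{z}\cdot\mathbf{w}_1)=\mathbf{1}\neq\mathbf{0}=\texttt{ni}_\epsilon(\mathbf{z}\cdot\mathbf{w}_2)$. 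Your own parenthetical (``multiplying by any purely-infinitesimal term raises the $\epsilon$-degree'') is exactly where this breaks --- an $\omega$-term in the other factor can pull the degree back down. Worse, this is not just a defect of the lemma: with $f(\mathbf{x})=\mathbf{\omega}$ (constant, hence microstable) and $g(\mathbf{x})=x_1\mathbf{\epsilon}$ (the $\epsilon$-coefficient of $\mathbf{x}$ times $\mathbf{\epsilon}$; microstable since $\texttt{ni}_\epsilon(g)\equiv 0$), one gets $(fg)(\mathbf{x})=x_1$ and $(fg)(\mathbf{x}+\mathbf{\epsilon})=x_1+1$, so $fg$ is \emph{not} microstable. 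The closure statement therefore needs an extra hypothesis (e.g.\ finite-valued functions, as the paper later assumes in \Cref{thm:multContinuous}), and your write-up should make that restriction explicit and use it.

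\textbf{Composition.} You correctly isolate the obstacle: microstability is only stated for the single shift $\mathbf{x}\mapsto\mathbf{x}+\mathbf{\epsilon}$, not for arbitrary infinitesimal perturbations $\boldsymbol{\delta}$. But the strengthened form you ``expect'' does not follow from \Cref{def:microstablefunctioninR^Z<} for general functions. Concretely, let $f(\mathbf{y})$ be $1$ if the fractional part of the $\epsilon$-coefficient $y_1$ is at least $\tfrac12$ and $0$ otherwise; since $\mathbf{y}\mapsto\mathbf{y}+\mathbf{\epsilon}$ shifts $y_1$ by $1$ and leaves the fractional part unchanged, $f$ is microstable. Take $g(\mathbf{x})=\mathbf{x}/2$, also microstable. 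Then $(f\circ g)(\mathbf{x})$ depends on the fractional part of $x_1/2$, while $(f\circ g)(\mathbf{x}+\mathbf{\epsilon})$ depends on the fractional part of $(x_1+1)/2$; these differ, so $f\circ g$ is not microstable. Your fallback of ``reduce to monomials $\mathbf{y}\mapsto\mathbf{y}^k$'' silently restricts to polynomial functions, which is a substantive extra assumption, not a proof step. Again, the theorem as literally stated seems to require an implicit restriction on the class of admissible ``functions in $\Rzl$'' (presumably the one made precise in the cited thesis); you should identify that class and argue within it rather than for arbitrary set-theoretic functions.
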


Now for every function $f$ defined in $\Rzl$, we are going to have the operator $\texttt{Der}_f$ which takes a 2-tuple in $\left(\mathbb{R}\times\Rzl\right)$ as its input and returns a member of $\Rzl$ as the output, i.e.:
\begin{center}
	$\texttt{Der}_f:\R\times\Rzl\rightarrow \Rzl$.
\end{center}
Eventually, this operator will be called a \textit{derivative} of $f$.

Using Newton's original definition (and a slight change of notation), if a function $f(x)$ is differentiable, then its derivative is given by:
\begin{equation}
\label{Eq:Newton'sDefinition}
\texttt{Der}_f(\mathbf{x},\mathbf{\epsilon})=\cfrac{f(\mathbf{x}+\mathbf{\epsilon})-f(\mathbf{x})}{\mathbf{\epsilon}}.
\end{equation}

Now suppose that we want to find a derivative of $f$ where $f$ is a function defined in $\Rzl$. We can certainly use \Cref{Eq:Newton'sDefinition} to calculate it as that equation holds for any function $f$. But how is this calculation related to the calculus practised in classical mathematics? Note that using Newton's definition to calculate the derivative will necessarily involve an inconsistent step. This inconsistency is located in the treatment given to the infinitesimal number. Thus it makes sense that in order to explore the problem posed above, we will use a paraconsistent reasoning strategy which is called Chunk and Permeate.

\subsection{Chunk and Permeate for Derivative in $\Rzl$ }

Details on the Chunk \& Permeate reasoning strategy can be seen in \cite{brown2004chunk}. Before applying this strategy for the derivative in $\Rzl$, define a set $E$ which consists of any algebraic terms such that they satisfy:
\begin{center}
	$\texttt{St}(\texttt{Der}_f(\mathbf{x},\mathbf{\epsilon}))=f\textnormal{ }'(x)$,
\end{center}
where $f\textnormal{ }'(x)$ denotes the usual derivative of $f$ in $\R$. We will need this set $E$ when we try to define the permeability relation between chunks.

\begin{proposition}
	The set $E$ as defined above is inhabited.
\end{proposition}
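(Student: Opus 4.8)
The plan is to establish inhabitation by producing an explicit witness; since the statement only asserts that $E$ is nonempty, no general criterion is needed, and the polynomials supply the cleanest examples because the whole Newton-style difference quotient can be evaluated entirely inside the field $\Rzl$.

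First I would fix a concrete polynomial, say $f(\mathbf{x})=a\mathbf{x}^2+b\mathbf{x}+c$ with $a,b,c\in\R$ --- essentially the motivating example from \Cref{sec:BackgroundandAim} --- and compute $\texttt{Der}_f(\mathbf{x},\mathbf{\epsilon})$ using the addition and multiplication on $\Rzl$. Expanding the numerator of \eqref{Eq:Newton'sDefinition} gives
\begin{align*}
f(\mathbf{x}+\mathbf{\epsilon})-f(\mathbf{x})=a\bigl(2\mathbf{x}\mathbf{\epsilon}+\mathbf{\epsilon}^2\bigr)+b\mathbf{\epsilon}=(2a\mathbf{x}+b)\mathbf{\epsilon}+a\mathbf{\epsilon}^2,
\end{align*}
which is divisible by $\mathbf{\epsilon}$; since $\Rzl$ is a field and $\mathbf{\epsilon}$ is invertible in it, dividing out $\mathbf{\epsilon}$ produces the algebraic term
\begin{align*}
\texttt{Der}_f(\mathbf{x},\mathbf{\epsilon})=2a\mathbf{x}+b+a\mathbf{\epsilon}.
\end{align*}
Next I would apply $\texttt{St}$: using that $\texttt{St}$ is $\R$-linear, fixes real constants, and sends every positive power of $\mathbf{\epsilon}$ to $\textbf{0}$, one gets $\texttt{St}(\texttt{Der}_f(\mathbf{x},\mathbf{\epsilon}))=2ax+b$, which is precisely the classical derivative $f'(x)$. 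Hence this term --- equivalently, the function $f$ together with the difference quotient just computed --- witnesses that $E\neq\emptyset$. An even more economical witness is $f(\mathbf{x})=\mathbf{x}$, for which the difference quotient is $\mathbf{\epsilon}/\mathbf{\epsilon}=\textbf{1}$ and $\texttt{St}(\textbf{1})=1=f'(x)$.

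There is no genuinely hard step in this argument; the only two points that need care are (i) that the division by $\mathbf{\epsilon}$ is legitimate and leaves no infinite residue --- this is exactly where the field structure of $\Rzl$ and the fact that $\mathbf{\epsilon}$ divides $(2a\mathbf{x}+b)\mathbf{\epsilon}+a\mathbf{\epsilon}^2$ exactly are used --- and (ii) that $\texttt{St}$ annihilates the surviving term $a\mathbf{\epsilon}$, which is immediate from the description of $\texttt{St}$ as reading off the coefficient of $\mathbf{\epsilon}^0$. I would also note in passing that the same computation shows that \emph{every} polynomial belongs to $E$, so $E$ is in fact much larger than a single point; but inhabitation is all the proposition asks for.
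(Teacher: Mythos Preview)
Your proposal is correct and takes essentially the same approach as the paper: both exhibit an explicit polynomial witness and compute the Newton difference quotient directly in $\Rzl$, then apply $\texttt{St}$. The paper uses only the identity function $f(\mathbf{x})=\mathbf{x}$ (which you also mention as the ``more economical witness''), whereas you lead with the general quadratic; either suffices, and your extra remark that all polynomials lie in $E$ is a harmless bonus.
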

\begin{proof}
	We want to show that the set $E$ has at least one element in it. It is clear that the identity function $\texttt{id}(x)=x$ is in $E$ because for all $\epsilon$:
	\begin{align*}
	\texttt{St}(\texttt{Der}_x(\mathbf{x},\mathbf{\epsilon})) &= \texttt{St}\left(\cfrac{\mathbf{x}+\mathbf{\epsilon}-\mathbf{x}}{\mathbf{\epsilon}}\right)\\
	&= \texttt{St}\left(\cfrac{\mathbf{\epsilon}}{\mathbf{\epsilon}}\right)\\
	&= \texttt{St}(\textbf{1})=1=f\textnormal{ }'(x).
	\end{align*}
\end{proof}

\begin{theorem}
	\label{thm:differentiablity}
	If $f$ and $g$ are microstable functions in $E$ and $c$ is any real constant, then
	\begin{enumerate}
		\item $f\pm g$ are in $E$,
		\item $cf$ is in $E$,
		\item $fg$ is in $E$,
		\item $\cfrac{f}{g}$ is in $E$, and
		\item $f\circ g$ is in $E$.
	\end{enumerate}
\end{theorem}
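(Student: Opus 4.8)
The plan is to verify each of the five closure properties directly from the definition of $E$ and the key fact that $f, g$ are microstable. Recall that $\mathbf{h} \in E$ means $\texttt{St}(\texttt{Der}_h(\mathbf{x}, \mathbf{\epsilon})) = h'(x)$, where $\texttt{Der}_h$ is given by Newton's quotient in \Cref{Eq:Newton'sDefinition}. The workhorse observations are: (i) $\texttt{St}$ is a linear, multiplicative map on finite elements of $\Rzl$ (linearity is stated in the preamble; multiplicativity $\texttt{St}(\mathbf{a}\mathbf{b}) = \texttt{St}(\mathbf{a})\texttt{St}(\mathbf{b})$ on finite numbers follows from the field structure and \Cref{def:AdditionAndMultiplicationInR^<Z}); and (ii) microstability of $f$ gives $\texttt{ni}_\epsilon(f(\mathbf{x}+\mathbf{\epsilon})) = \texttt{ni}_\epsilon(f(\mathbf{x}))$, so that $f(\mathbf{x}+\mathbf{\epsilon}) - f(\mathbf{x})$ is purely infinitesimal (its $\texttt{Nst}_\omega$ and $\texttt{St}$ parts vanish), which is exactly what makes the quotient by $\mathbf{\epsilon}$ finite so that $\texttt{St}$ of it makes sense.

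First I would handle (1) and (2) together: since $\texttt{Der}_{f\pm g}(\mathbf{x},\mathbf{\epsilon}) = \texttt{Der}_f(\mathbf{x},\mathbf{\epsilon}) \pm \texttt{Der}_g(\mathbf{x},\mathbf{\epsilon})$ and $\texttt{Der}_{cf} = c\,\texttt{Der}_f$ directly from the formula, linearity of $\texttt{St}$ plus the classical sum/scalar rules for $(f\pm g)' = f' \pm g'$ and $(cf)' = cf'$ finish these immediately. For (3), I would write $f(\mathbf{x}+\mathbf{\epsilon})g(\mathbf{x}+\mathbf{\epsilon}) - f(\mathbf{x})g(\mathbf{x})$ using the standard algebraic identity $f(\mathbf{x}+\mathbf{\epsilon})\big(g(\mathbf{x}+\mathbf{\epsilon}) - g(\mathbf{x})\big) + g(\mathbf{x})\big(f(\mathbf{x}+\mathbf{\epsilon}) - f(\mathbf{x})\big)$, divide by $\mathbf{\epsilon}$, and take $\texttt{St}$; microstability lets me replace $\texttt{St}(f(\mathbf{x}+\mathbf{\epsilon}))$ by $\texttt{St}(f(\mathbf{x})) = f(x)$, and then $\texttt{St}$-multiplicativity together with $f,g \in E$ delivers $f(x)g'(x) + g(x)f'(x) = (fg)'(x)$. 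Part (4) is the analogue with the quotient identity $\frac{f(\mathbf{x}+\mathbf{\epsilon})}{g(\mathbf{x}+\mathbf{\epsilon})} - \frac{f(\mathbf{x})}{g(\mathbf{x})} = \frac{g(\mathbf{x})(f(\mathbf{x}+\mathbf{\epsilon}) - f(\mathbf{x})) - f(\mathbf{x})(g(\mathbf{x}+\mathbf{\epsilon}) - g(\mathbf{x}))}{g(\mathbf{x}+\mathbf{\epsilon})g(\mathbf{x})}$, requiring the proviso that $g(x) \neq 0$ so the denominator is invertible and finite-with-finite-inverse.

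The step I expect to be the main obstacle is (5), the chain rule. Here I would set $\mathbf{u} = g(\mathbf{x}+\mathbf{\epsilon}) - g(\mathbf{x})$ and write $\texttt{Der}_{f\circ g}(\mathbf{x},\mathbf{\epsilon}) = \frac{f(g(\mathbf{x}) + \mathbf{u}) - f(g(\mathbf{x}))}{\mathbf{u}} \cdot \frac{\mathbf{u}}{\mathbf{\epsilon}}$, the classical trick. The delicate points are: $\mathbf{u}$ is infinitesimal (by microstability of $g$) but need not equal a scalar multiple of the canonical infinitesimal $\mathbf{\epsilon}$, so one must check that $f$ applied at an argument perturbed by a general infinitesimal $\mathbf{u}$ still behaves like $\texttt{Der}_f$ evaluated appropriately — essentially one needs $\texttt{St}\big(\frac{f(g(\mathbf{x})+\mathbf{u}) - f(g(\mathbf{x}))}{\mathbf{u}}\big) = f'(g(x))$, which should follow from $f \in E$ together with microstability of $f$ ensuring the difference quotient's standard part is insensitive to which infinitesimal increment is used. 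One also needs $\mathbf{u}/\mathbf{\epsilon}$ to be finite with $\texttt{St}(\mathbf{u}/\mathbf{\epsilon}) = \texttt{St}(\texttt{Der}_g(\mathbf{x},\mathbf{\epsilon})) = g'(x)$, and must separately address the degenerate case $\mathbf{u} = \mathbf{0}$ (where $g$ is locally microconstant), in which case $f \circ g$ inherits the constant behaviour and its derivative's standard part is $0 = f'(g(x)) \cdot 0$. Assembling these via $\texttt{St}$-multiplicativity gives $f'(g(x)) g'(x) = (f\circ g)'(x)$, completing the proof.
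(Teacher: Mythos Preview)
The paper does not actually present a proof of this theorem in the text; it only records that ``the proof of the above theorem is rather long and so can be seen in \cite[p.~76]{nugraha2018naive}.'' Consequently there is no in-paper argument against which to compare your proposal directly. Your outline follows the expected route---linearity for (1)--(2), the standard add-and-subtract identities for (3)--(4), and the classical multiply-and-divide factorisation for (5)---and these are almost certainly the same manipulations the cited thesis carries out, since there is essentially only one natural way to organise such a proof in this setting.

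That said, you correctly flag the one genuinely nontrivial point, and your sketch does not yet close it. In (5) the increment $\mathbf{u}=g(\mathbf{x}+\mathbf{\epsilon})-g(\mathbf{x})$ is a general infinitesimal of $\Rzl$, not the distinguished generator $\mathbf{\epsilon}$, whereas membership in $E$ is defined only through $\texttt{Der}_f(\mathbf{x},\mathbf{\epsilon})$ with that specific $\mathbf{\epsilon}$. Your proposal says this ``should follow from $f\in E$ together with microstability,'' but that is precisely the step that needs an argument: one must show that for microstable $f$ the standard part of $\bigl(f(\mathbf{y}+\mathbf{u})-f(\mathbf{y})\bigr)/\mathbf{u}$ does not depend on which nonzero infinitesimal $\mathbf{u}$ is chosen (equivalently, that $\texttt{Der}_f(\mathbf{y},\mathbf{u})$ and $\texttt{Der}_f(\mathbf{y},\mathbf{\epsilon})$ have the same standard part). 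Without this, the chain-rule case is not complete. This independence claim is presumably what the longer argument in the thesis supplies, and it is the piece you would need to fill in to make your proof self-contained.
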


\noindent The proof of the above theorem is rather long and so can be seen in \cite[p.~76]{nugraha2018naive}.

Now we are ready to construct the chunk and permeate structure, called $\mathfrak{\widehat{R}}$, which is formally written as $\mathfrak{\widehat{R}}=\langle\{\Sigma_S,\Sigma_T\},\rho,T\rangle$ where the source chunk $\Sigma_S$ is the language of $\Rzl$, the target chunk $\Sigma_T$ is the language of $\R$, and $\rho$ is the permeability relation between $S$ and $T$.

\paragraph{The source chunk $\Sigma_S$}

As stated before, this chunk is actually the language of the set $\mathbb{{R^{Z_<}}}$ and therefore, it consists of all six of its axioms. The source chunk requires one additional axiom to define what it means by derivative. This additional axiom can be stated as:

\begin{center}
	S1: $Df=\texttt{Der}_f(\mathbf{x},\mathbf{\epsilon})$
\end{center}

\noindent where $\texttt{Der}_f(\mathbf{x},\mathbf{\epsilon})$ is defined in \Cref{Eq:Newton'sDefinition}.

\paragraph{The target chunk $\Sigma_T$} Again, the target chunk contains the usual axiom for the set or real numbers, $\R$. There is only one additional axiom needed for this chunk:
\begin{center}
	T1: $\forall \mathbf{x}$ $\mathbf{x}=\texttt{St}(\mathbf{x})$.
\end{center}
Note that the axiom T1 above is actually equivalent to saying that $\forall \mathbf{x}$ $\texttt{Nst}(\mathbf{x})=0$.

\paragraph{The permeability relation} The permeability relation $\rho(S,T)$ is the set of equations of the form %--------> mention later on the derivative that that derivative is permeated to R
\begin{center}
	$Df=g$
\end{center}
where $f\in E$. The function $g$ which is permeated by this permeability relation will be the first derivative of $f$ in $\mathbb{R}$. This permeability relation shows that the derivative notion is permeable to the set $\mathbb{R}$.

\begin{example}
	Suppose that $f(\mathbf{x})=\textbf{3}\mathbf{x}$ for all $\mathbf{x}$. First, working within $\Sigma_S$, the operator $D$ is applied to $f$ such that:
	\begin{align*}
	Df & = \texttt{Der}_f(\mathbf{x},\mathbf{\epsilon})\\
	& = \cfrac{\textbf{3}(\mathbf{x}+\mathbf{\epsilon})-\textbf{3}\mathbf{x}}{\mathbf{\epsilon}}\\
	& = \cfrac{\textbf{3}\mathbf{\epsilon}}{\mathbf{\epsilon}}=\textbf{3}.
	\end{align*}
	Note that $\texttt{St}(\texttt{Der}_f(\mathbf{x},\mathbf{\epsilon}))=\texttt{St}(\textbf{3})=3=f\textnormal{ }'(x)$, and so $f(x)\in E$. Permeating the last equation of $Df$ above to $\Sigma_T$ gives us:
	\begin{align*}
	Df = 3
	\end{align*}
	and so the derivative of $f(x)=3x$ is $3$.
\end{example}

\begin{example}
	Suppose that $f(\mathbf{x})=\mathbf{x}^\textbf{2}+\textbf{2}\mathbf{x}+\textbf{3}$ for all $\mathbf{x}$. First, working within $\Sigma_S$, the operator $D$ is applied to $f$ such that:
	\begin{align*}
	Df & = \texttt{Der}_f(\mathbf{x},\mathbf{\epsilon})\\
	& = \cfrac{(\mathbf{x}+\mathbf{\epsilon})^\textbf{2}+\textbf{2}(\mathbf{x}+\mathbf{\epsilon})+\textbf{3}-\mathbf{x}^\textbf{2}-\textbf{2}\mathbf{x}-\textbf{3}}{\mathbf{\epsilon}}\\
	& = \cfrac{\textbf{2}\mathbf{x}\mathbf{\epsilon}+\mathbf{\epsilon}^\textbf{2}+\textbf{2}\mathbf{\epsilon}}{\mathbf{\epsilon}}\\
	& = \textbf{2}\mathbf{x}+\mathbf{\epsilon}+\textbf{2}.
	\end{align*}
	Note that the standard part of $\textbf{2}\mathbf{x}+\mathbf{\epsilon}+\textbf{2}$ will depend on the domain of $\mathbf{x}$. That is:
	\[ \texttt{St}(\textbf{2}\mathbf{x}+\mathbf{\epsilon}+\textbf{2})=
	\begin{cases} 
	2x+2, & \text{if } x\in\R \\
	2, & \text{else.}
	\end{cases}
	\]
	In other words, if (and only if) $\texttt{Nst}(\mathbf{x})=0$, i.e. $x\in\R$, $Df$ can be permeated into $\Sigma_T$. Thus, if $x$ is a real number, then we have the derivative of $f(x)=x^2+2x+3=2x+2$.
\end{example}

\begin{example}
	Suppose that $f(\mathbf{x})=\sign(\mathbf{x})$ is defined as:
	\[ \sign(\mathbf{x})=
	\begin{cases} 
	\mathbf{1}, & \text{if } \texttt{St}(\mathbf{x})>0 \\
	\mathbf{0}, & \text{if } \texttt{St}(\mathbf{x})=0\\
	\mathbf{-1},& \text{if } \texttt{St}(\mathbf{x})<0
	\end{cases}
	\]
	First, working within $\Sigma_S$, the operator $D$ is applied to $f$ so that:
	\begin{align*}
	Df & = \texttt{Der}_f(\mathbf{x},\mathbf{\epsilon})\\
	& = \cfrac{\sign(\mathbf{x}+\mathbf{\epsilon})-\sign(\mathbf{x})}{\mathbf{\epsilon}}\\
	& = 0 \text{ (because $\forall x$ $\texttt{St}(\mathbf{x})=\texttt{St}(\mathbf{x}+\epsilon)$)}
	\end{align*}
	Note that $\texttt{St}(\texttt{Der}_f(\mathbf{x},\mathbf{\epsilon}))=\texttt{St}(\mathbf{0})=0=f\textnormal{ }'(x)$, and so $f(x)\in E$. Permeating the last equation of $Df$ above to $\Sigma_T$ gives us:
	\begin{align*}
	Df = 0
	\end{align*}
	and so the derivative of $f(\mathbf{x})=\sign(\mathbf{x})$ is $0$ for all $\mathbf{x}$. Notice that this is not the case in $\R$, where the derivative of the $\sign$ function at $x=0$ is not defined because of its discontinuity. However, this is not really a bizarre behaviour because if we look very closely at the infinitesimal neighbourhood of $\mathbf{x}$ when $\texttt{St}(\mathbf{x})=0$, the function $\sign(\mathbf{x})$ will look like a straight horizontal line and so it makes a perfect sense to have $0$ as the slope of the tangent line there. Moreover, this phenomenon also happens in distribution theory where $\sign$ function has its derivative everywhere.
\end{example}

\subsection{Transcendental Functions in $\Rzl$}

As we know, there are some special functions defined in real numbers and two of them are the trigonometric and the exponential functions. How then are these functions defined in $\Rzl$? Here we propose to define them using power series.

The first two trigonometric functions that we are going to discuss are the $\sin$ and $\cos$ functions. Using the MacLaurin power series, these two functions are defined as follows:
\begin{align}
\label{def:SinAsSeries}
\sin(\mathbf{x})=\sum\limits_{n=0}^{}\cfrac{(-1)^n}{(2n+1)!}\text{ }\mathbf{x}^{2n+1}
\end{align}
and
\begin{align}
\cos(\mathbf{x})=\sum\limits_{n=0}^{}\cfrac{(-1)^n}{(2n)!}\text{ }\mathbf{x}^{2n}.
\end{align}

\noindent The exponential function is defined as:
\begin{align}
\label{def:ExpAsSeries}
\exp(\mathbf{x})=\sum\limits_{n=0}^{}\cfrac{1}{n!}\mathbf{x}^n.
\end{align}

\noindent Note that the MacLaurin polynomial is just a special case of Taylor polynomial with regards to how the function is approximated at $\mathbf{x}=\textbf{0}$.

\begin{example}
	\label{ex:sinx+epsilon}
	Suppose that we have $\mathbf{x}=x+a\mathbf{\epsilon}=\langle \widehat{x},a,0,0,\dots\rangle$ where $x,a\in\R$. We want to know what $\sin(\mathbf{x})$ is. Based on \Cref{def:SinAsSeries},
	\begin{align*}
	\sin(\mathbf{x})=\sin(x+\mathbf{\epsilon})=(x+\mathbf{\epsilon})-\tfrac{\textbf{1}}{\textbf{3}!}(x+\mathbf{\epsilon})^\textbf{3}+\tfrac{\textbf{1}}{\textbf{5}!}(x+\mathbf{\epsilon})^\textbf{5}-\tfrac{\textbf{1}}{\textbf{7}!}(x+\mathbf{\epsilon})^\textbf{7}+\dots
	\end{align*}
	\noindent Our task now is to find all the members of $\texttt{Nst}_\mathbf{\epsilon}(\mathbf{\sin(x)})$ and also $\texttt{St}(\mathbf{\sin(x)})$. These are shown in \Cref{table:sinx+epsilon}. Note that from the way the $\sin$ function is defined, $x_i=0$ $\forall x_i\in\texttt{Nst}_\omega(\sin(\mathbf{x}))$. Thus from \Cref{table:sinx+epsilon}, we get:
	\begin{align*}
	\sin(\mathbf{x}) & = \sin(x+a\mathbf{\epsilon})\\
	& = \langle\widehat{\sin(x)},a\cos(x),-\tfrac{a^2}{2!}\sin(x),-\tfrac{a^\textbf{3}}{\textbf{3}!}\cos(x),\tfrac{a^4}{4!}\sin(x),\tfrac{a^\textbf{5}}{\textbf{5}!}\cos(x),\dots\rangle,
	\end{align*}
	and we also get
	\begin{align*}
	\sin(\mathbf{\epsilon})=\langle\widehat{0},\textbf{1},0,-\tfrac{\textbf{1}}{\textbf{3}!},0,\tfrac{\textbf{1}}{\textbf{5}!},\dots\rangle=\mathbf{\epsilon}-\tfrac{\textbf{1}}{\textbf{3}!}\mathbf{\epsilon}^\textbf{3}+\tfrac{\textbf{1}}{\textbf{5}!}\mathbf{\epsilon}^\textbf{5}-\dots
	\end{align*}
	for an infinitesimal angle $\mathbf{\epsilon}$.
	
\begin{table}
	\caption{$\texttt{St}(\sin(\mathbf{x)})$ and the first four members of $\texttt{Nst}_\epsilon(\sin(\mathbf{x}))$}
	\centering 
	\begin{tabular}{|l|l|l|}
		\hline %inserts horizontal line
		& Expanded Form	& Simplified Form \\
		\hline 
		real-part				&
		$\begin{array} {ll}
		= & \mathbf{x}-\tfrac{1}{3!}\mathbf{x}^3+\tfrac{1}{\textbf{5}!}\mathbf{x}^\textbf{5}-\tfrac{1}{\textbf{7}!}\mathbf{x}^\textbf{7}+\dots \\
		= & \sum_{n=0}\tfrac{-1^n}{(2n+1)!}\text{ }\mathbf{x}^{2n+1}
		\end{array}$
		& $=\sin(\mathbf{x})$\\ \hline 
		$\mathbf{\epsilon}$-part			&
		$\begin{array} {ll}
		= & a\mathbf{\epsilon}-\tfrac{1}{2!}\mathbf{x}^2a\mathbf{\epsilon}+\tfrac{1}{4!}a\mathbf{\epsilon} \mathbf{x}^4-\tfrac{1}{6!}a\mathbf{\epsilon} \mathbf{x}^6+\dots \\
		= & \mathbf{\epsilon}(a-\tfrac{1}{2!}a\mathbf{x}^2+\tfrac{1}{4!}a\mathbf{x}^4-\tfrac{1}{6!}a\mathbf{x}^6+\dots) \\
		= & \mathbf{\epsilon}\sum_{n=0}\tfrac{-1^n}{(2n)!}\text{ }a\mathbf{x}^{2n}
		\end{array}$ 
		& $=\mathbf{\epsilon}(a\cos(\mathbf{x}))$ \\ \hline 		
		$\mathbf{\epsilon}^2$-part			&
		$\begin{array} {ll}
		= & -\tfrac{3}{3!}a^2\mathbf{\epsilon}^2\mathbf{x}+\tfrac{\textbf{10}}{\textbf{5}!}a^2\mathbf{\epsilon}^2\mathbf{x}^3-\tfrac{21}{\textbf{7}!}a^2\mathbf{\epsilon}^2\mathbf{x}^\textbf{5}+\dots \\
		= &
		-\tfrac{1}{2!}a^2\mathbf{\epsilon}^2\mathbf{x}+\tfrac{2}{4!}a^2\mathbf{\epsilon}^2\mathbf{x}^3-\tfrac{3}{6!}a^2\mathbf{\epsilon}^2\mathbf{x}^\textbf{5}+\dots \\
		= & \mathbf{\epsilon}^2(-\tfrac{1}{2!}\mathbf{x}+\tfrac{2}{4!}a^2\mathbf{x}^3-\tfrac{3}{6!}a^2\mathbf{x}^\textbf{5}+\dots) \\
		= & \mathbf{\epsilon}^2\sum_{n=0}\tfrac{-1^{n+1}(n+1)}{(2n+2)!}\text{ }a^2\mathbf{x}^{2n+1}
		\end{array}$ 
		& $=\mathbf{\epsilon}^2(-\tfrac{a^2}{2}\sin(\mathbf{x}))$\\ \hline 		
		$\mathbf{\epsilon}^3$-part			&
		$\begin{array} {ll}
		= & -\tfrac{1}{3!}a^3\mathbf{\epsilon}^3+\tfrac{\textbf{10}}{\textbf{5}!}a^3\mathbf{\epsilon}^3\mathbf{x}^2-\tfrac{\textbf{3\textbf{5}}}{\textbf{7}!}a^3\mathbf{\epsilon}^3\mathbf{x}^4+\tfrac{\textbf{84}}{\textbf{9}!}a^3\mathbf{\epsilon}^3\mathbf{x}^6-\dots \\
		= & \mathbf{\epsilon}^3(-\tfrac{1}{3!}a^3\mathbf{x}^0+\tfrac{\textbf{10}}{\textbf{5}!}a^3\mathbf{x}^2-\tfrac{\textbf{3\textbf{5}}}{\textbf{7}!}a^3\mathbf{x}^4+\tfrac{\textbf{84}}{\textbf{9}!}a^3\mathbf{x}^6-\dots) \\
		= & \mathbf{\epsilon}^3\sum_{n=0}\tfrac{-1^{n+1}}{6(2n)!}\text{ }a^3\mathbf{x}^{2n}
		\end{array}$ 
		& $=\mathbf{\epsilon}^3(-\tfrac{a^3}{6}\cos(\mathbf{x}))$\\ \hline 		
		$\mathbf{\epsilon}^4$-part			&
		$\begin{array} {ll}
		= & \tfrac{\textbf{5}}{\textbf{5}!}a^4\mathbf{\epsilon}^4\mathbf{x}-\tfrac{\textbf{3\textbf{5}}}{\textbf{7}!}a^4\mathbf{\epsilon}^4\mathbf{x}^3+\tfrac{\textbf{126}}{\textbf{9}!}a^4\mathbf{\epsilon}^4\mathbf{x}^\textbf{5}-\tfrac{\textbf{3\textbf{30}}}{11!}a^4\mathbf{\epsilon}^4\mathbf{x}^\textbf{7}+\dots \\
		= & \mathbf{\epsilon}^4(\tfrac{1}{4!}\mathbf{x}-\tfrac{\textbf{5}}{6!}a^4\mathbf{x}^3+\tfrac{\textbf{14}}{8!}a^4\mathbf{x}^\textbf{5}-\tfrac{\textbf{30}}{\textbf{10}!}a^4\mathbf{x}^\textbf{7}+\dots) \\
		= & \mathbf{\epsilon}^4\sum_{n=0}\tfrac{-1^n}{24(2n+1)!}\text{ }a^4\mathbf{x}^{2n+1}
		\end{array}$ 
		& $=\mathbf{\epsilon}^4(\tfrac{a^4}{24}\sin(\mathbf{x}))$\\ \hline 
	\end{tabular}
%	\vspace{0.5em}
	\label{table:sinx+epsilon}
\end{table}
\end{example}

\begin{example}
	\label{ex:cosx+epsilon}
	Suppose that we have $\mathbf{x}=x+a\epsilon$ where $x,a\in\R$. Here we try to find what $\cos\mathbf{x}$ is. With a similar method to the one used in \Cref{ex:sinx+epsilon}, we have a calculation like what is shown in \Cref{table:cosx+epsilon}.
	\begin{table} 
		\caption{$\texttt{St}(\cos(\mathbf{x)})$ and the first two members of $\texttt{Nst}_\epsilon(\cos(\mathbf{x}))$}
		\centering 
		\begin{tabular}{|l|l|l|}
			\hline 
			real-part				&
			$\begin{array} {ll}
			= & 1-\tfrac{1}{2!}\mathbf{x}^2+\tfrac{1}{\textbf{4}!}\mathbf{x}^\textbf{4}-\tfrac{1}{\textbf{6}!}\mathbf{x}^\textbf{6}+\dots \\
			= & \sum_{n=0}\tfrac{-1^n}{(2n)!}\text{ }\mathbf{x}^{2n}
			\end{array}$
			& $=\cos(\mathbf{x})$\\ \hline 
			
			$\mathbf{\epsilon}$-part			&
			$\begin{array} {ll}
			= & -\tfrac{2}{2!}a\mathbf{\epsilon} \mathbf{x}+\tfrac{\textbf{4}}{\textbf{4}!}a\mathbf{\epsilon} \mathbf{x}^3-\tfrac{\textbf{6}}{\textbf{6}!}a\mathbf{\epsilon} \mathbf{x}^5+\dots \\
			= & \mathbf{\epsilon}(-a\mathbf{x}+\tfrac{1}{3!}a\mathbf{x}^3-\tfrac{1}{5!}a\mathbf{x}^5+\dots) \\
			= & \mathbf{\epsilon}\sum_{n=0} \tfrac{-1^{n+1}}{(2n+1)!}\text{ }a\mathbf{x}^{2n+1}
			\end{array}$ 
			& $=\mathbf{\epsilon}(-a\sin(\mathbf{x}))$ \\ \hline 
			
			$\mathbf{\epsilon}^2$-part			&
			$\begin{array} {ll}
			= & -\tfrac{1}{2!}a^2\mathbf{\epsilon}^2+\tfrac{\textbf{6}}{\textbf{4}!}a^2\mathbf{\epsilon}^2\mathbf{x}^2-\tfrac{\textbf{15}}{\textbf{6}!}a^2\mathbf{\epsilon}^2\mathbf{x}^\textbf{4}+\tfrac{\textbf{28}}{8!}a^2\mathbf{\epsilon}^2\mathbf{x}^\textbf{6}-\dots \\
			= & \mathbf{\epsilon}^2\sum_{n=0}\tfrac{-1^{n+1}(n+1)(2n+1)}{(2n+2)!}\text{ }a^2\mathbf{x}^{2n}\\
			= & \mathbf{\epsilon}^2\sum_{n=0}\tfrac{1}{2}\tfrac{-1^{n+1}}{(2n)!}\text{ }\mathbf{x}^{2n}\\
			\end{array}$ 
			& $=\mathbf{\epsilon}^2(-\tfrac{a^2}{2}\cos(\mathbf{x}))$\\ \hline 
		\end{tabular}
%		\vspace{0.5em}
		\label{table:cosx+epsilon}
	\end{table}
	
Thus from \Cref{table:cosx+epsilon}, we get:
\begin{align*}
	\cos(\mathbf{x})=\cos(x+a\mathbf{\epsilon})=\langle\widehat{\cos(x)},-a\sin(x),-\tfrac{a^2}{2!}\cos(x),\tfrac{a^3}{3!}\sin(x),\tfrac{a^4}{4!}\cos(x),\dots\rangle,
\end{align*}

and we also get
\begin{align*}
	\cos(\mathbf{\epsilon})=\langle\widehat{1},0,-\tfrac{1}{2},0,\dots\rangle=1-\tfrac{\textbf{1}}{\textbf{2}!}\mathbf{\epsilon}^\textbf{2}+\dots
	\end{align*}
	for an infinitesimal angle $\mathbf{\epsilon}$.
\end{example}

\begin{example}
	\label{ex:expx+epsilon}
	With the same $\mathbf{x}$ as in \Cref{ex:sinx+epsilon,ex:cosx+epsilon}, we try to know what $\exp(\mathbf{x})$ is. Based on \Cref{def:ExpAsSeries},
	\begin{align*}
	\exp(\mathbf{x})=\exp(x+a\mathbf{\epsilon})=\textbf{1}+(x+a\mathbf{\epsilon})-\tfrac{\textbf{1}}{\textbf{2}!}(x+a\mathbf{\epsilon})^\textbf{2}+\tfrac{\textbf{1}}{\textbf{3}!}(x+a\mathbf{\epsilon})^\textbf{3}+\dots
	\end{align*}
	\noindent Our task now is to find all the members of $\texttt{Nst}_\epsilon(\exp(\mathbf{x}))$ and also $\texttt{St}(\exp(\mathbf{x}))$, which are shown in \Cref{table:expx+epsilon}. Note that from the way we define the function $\exp$, $\forall x_i\in\texttt{Nst}_\omega(\exp(\mathbf{x}))$ $x_i=0$. Thus from \Cref{table:expx+epsilon}, we get:
	\begin{align*}
	\exp(\mathbf{x})=\exp(x+a\mathbf{\epsilon})=\langle\widehat{\exp(x)},a\exp(x),\tfrac{a^2}{2!}\exp(x),\tfrac{a^3}{3!}\exp(x),\tfrac{a^4}{4!}\exp(x),\dots\rangle,
	\end{align*}
	and we also get
	\begin{align*}
	\exp(\mathbf{\epsilon})=\langle\widehat{1},1,\tfrac{1}{2!},\tfrac{1}{3!},\tfrac{1}{4!},\dots\rangle=1+\mathbf{\epsilon}+\tfrac{\textbf{1}}{\textbf{2}!}\mathbf{\epsilon}^\textbf{2}+\tfrac{\textbf{1}}{\textbf{3}!}\mathbf{\epsilon}^\textbf{3}-\dots
	\end{align*}
	for an infinitesimal angle $\mathbf{\epsilon}$.
	
	\begin{table} 
		\caption{$\texttt{St}(\exp(\mathbf{x)})$ and The First Three Members of $\texttt{Nst}_\epsilon(\exp(\mathbf{x}))$}
		\centering 
		\begin{tabular}{|l|l|l|}
			\hline %inserts horizontal line
			& Expanded Form	& Simplified Form \\
			\hline 
			real-part				&
			$\begin{array} {ll}
			= & 1+\mathbf{x}+\tfrac{1}{2!}\mathbf{x}^2+\tfrac{1}{3!}\mathbf{x}^3+\dots \\
			= & \sum_{n=0}\tfrac{1}{n!}\text{ }\mathbf{x}^n
			\end{array}$
			& $=\exp(\mathbf{x})$\\ \hline 
			
			$\mathbf{\epsilon}$-part			&
			$\begin{array} {ll}
			= & a\mathbf{\epsilon}+\tfrac{2}{2!}a\mathbf{\epsilon} \mathbf{x}+\tfrac{3}{3!}a\mathbf{\epsilon} \mathbf{x}^2+\tfrac{\textbf{4}}{\textbf{4}!}a\mathbf{\epsilon} \mathbf{x}^3\mathbf{\epsilon}+\dots \\
			= & \mathbf{\epsilon}(a+a\mathbf{x}+\tfrac{1}{2!}a\mathbf{x}^2+\tfrac{1}{3!}a\mathbf{x}^3-\tfrac{1}{\textbf{4}!}a\mathbf{x}^\textbf{4}+\dots) \\
			= & \mathbf{\epsilon}\sum_{n=0}\tfrac{1}{n!}\text{ }a\mathbf{x}^n
			\end{array}$ 
			& $=\mathbf{\epsilon}(a\exp(\mathbf{x}))$ \\ \hline 
			
			$\mathbf{\epsilon}^2$-part			&
			$\begin{array} {ll}
			= & \tfrac{1}{2!}a^2\mathbf{\epsilon}^2\mathbf{x}+\tfrac{3}{3!}a^2\mathbf{\epsilon}^2\mathbf{x}+\tfrac{6}{\textbf{4}!}a^2\mathbf{\epsilon}^2\mathbf{x}^2+\tfrac{\textbf{10}}{5!}a^2\mathbf{\epsilon}^2\mathbf{x}^3+\dots \\
			= &
			\mathbf{\epsilon}^2(\tfrac{1}{2!}a^2+\tfrac{3}{3!}a^2\mathbf{x}+\tfrac{6}{\textbf{4}!}a^2\mathbf{x}^2+\tfrac{\textbf{10}}{5!}a^2\mathbf{x}^3+\dots) \\
			= & \mathbf{\epsilon}^2\sum_{n=0}\tfrac{(n+1)(n+2)}{2(n+2)!}\text{ }a^2\mathbf{x}^n\\
			= & \mathbf{\epsilon}^2\sum_{n=0}\tfrac{1}{2(n!)}\text{ }a^2\mathbf{x}^n
			\end{array}$ 
			& $=\mathbf{\epsilon}^2(\tfrac{a^2}{2}\exp(\mathbf{x}))$\\ \hline 
			
			$\mathbf{\epsilon}^3$-part			&
			$\begin{array} {ll}
			= & \tfrac{1}{3!}a^3\mathbf{\epsilon}^3+\tfrac{\textbf{4}}{\textbf{4}!}a^3\mathbf{\epsilon}^3\mathbf{x}+\tfrac{\textbf{10}}{5!}a^3\mathbf{\epsilon}^3\mathbf{x}^2+\tfrac{\textbf{20}}{6!}a^3\mathbf{\epsilon}^3\mathbf{x}^3+\tfrac{\textbf{35}}{7!}a^3\mathbf{\epsilon}^3\mathbf{x}^\textbf{4}+\dots \\
			= & \mathbf{\epsilon}^3(\tfrac{1}{3!}a^3+\tfrac{\textbf{4}}{\textbf{4}!}a^3\mathbf{x}-\tfrac{\textbf{10}}{5!}a^3\mathbf{x}^2+\tfrac{\textbf{20}}{6!}a^3\mathbf{x}^3+\tfrac{\textbf{35}}{7!}a^3\mathbf{x}^\textbf{4}+\dots) \\
			= & \mathbf{\epsilon}^3\sum_{n=0}\tfrac{(n+1)(n+2)(n+3)}{6(n+3)!}\text{ }a^3\mathbf{x}^n\\
			= & \mathbf{\epsilon}^3\sum_{n=0}\tfrac{1}{6(n!)}\text{ }a^3\mathbf{x}^n
			\end{array}$ 
			& $=\mathbf{\epsilon}^3(\tfrac{a^3}{6}\exp(\mathbf{x}))$\\ \hline 
		\end{tabular}
%		\vspace{0.5em}
		\label{table:expx+epsilon}
	\end{table}
\end{example}

From the preceding discussion, we have the following proposition.

\begin{proposition} For the $\sin$, $\cos$, and $\exp$ functions:
	\begin{enumerate}
		\item $\textnormal{\texttt{Der}}_{\sin{\mathbf{x}}}(\mathbf{x},\mathbf{\epsilon})=\tfrac{\sin(\mathbf{x}+\mathbf{\epsilon})-\sin(\mathbf{x})}{\mathbf{\epsilon}}=\langle\widehat{\cos(x)},-\tfrac{1}{2!}\sin(x),-\tfrac{1}{3!}\cos(x),\dots\rangle$, and so we have: 
		\begin{center}
			$\textnormal{\texttt{St}}(\textnormal{\texttt{Der}}_{\sin(\mathbf{x})}(\mathbf{x},\mathbf{\epsilon}))=\cos(\mathbf{x})$
		\end{center}
		\item $\textnormal{\texttt{Der}}_{\cos{\mathbf{x}}}(\mathbf{x},\mathbf{\epsilon})=\tfrac{\cos(\mathbf{x}+\mathbf{\epsilon})-\cos(\mathbf{x})}{\mathbf{\epsilon}}=\langle\widehat{-\sin(x)},-\tfrac{1}{2!}\cos(x),\tfrac{1}{3!}\sin(x),\dots\rangle$, and so 
		\begin{center}
			$\textnormal{\texttt{St}}(\textnormal{\texttt{Der}}_{\cos(\mathbf{x})}(\mathbf{x},\mathbf{\epsilon}))=-\sin(\mathbf{x})$
		\end{center}
		\item $\textnormal{\texttt{Der}}_{\exp{\mathbf{x}}}(\mathbf{x},\mathbf{\epsilon})=\tfrac{\exp(\mathbf{x}+\mathbf{\epsilon})-\exp(\mathbf{x})}{\mathbf{\epsilon}}=\langle\widehat{\exp(x)},\tfrac{1}{2!}\exp(x),\tfrac{1}{3!}\exp(x),\dots\rangle$, and so 
		\begin{center}
			$\textnormal{\texttt{St}}(\textnormal{\texttt{Der}}_{\exp(\mathbf{x})}(\mathbf{x},\mathbf{\epsilon}))=\exp(\mathbf{x})$
		\end{center}
	\end{enumerate}
\end{proposition}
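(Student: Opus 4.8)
The plan is to reduce all three items to the series expansions already carried out in \Cref{ex:sinx+epsilon,ex:cosx+epsilon,ex:expx+epsilon}, combined with the observation that in $\Rzl$ dividing by $\mathbf{\epsilon}$ is merely an index shift. By \Cref{Eq:Newton'sDefinition}, $\texttt{Der}_{\sin}(\mathbf{x},\mathbf{\epsilon})=\bigl(\sin(\mathbf{x}+\mathbf{\epsilon})-\sin(\mathbf{x})\bigr)/\mathbf{\epsilon}$, so the first step is to exhibit $\sin(\mathbf{x}+\mathbf{\epsilon})$ as an explicit element of $\Rzl$. When $\mathbf{x}=\langle\widehat{x},0,0,\dots\rangle$ is real, $\mathbf{x}+\mathbf{\epsilon}$ is the number $x+a\mathbf{\epsilon}$ with $a=1$, which is exactly the situation of \Cref{ex:sinx+epsilon}; \Cref{table:sinx+epsilon} then gives $\sin(\mathbf{x}+\mathbf{\epsilon})=\langle\widehat{\sin(x)},\cos(x),-\tfrac{1}{2!}\sin(x),-\tfrac{1}{3!}\cos(x),\tfrac{1}{4!}\sin(x),\dots\rangle$, and \Cref{table:cosx+epsilon,table:expx+epsilon} supply the corresponding expansions for $\cos(\mathbf{x}+\mathbf{\epsilon})$ and $\exp(\mathbf{x}+\mathbf{\epsilon})$. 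An equivalent and shorter route is to use $\sin(x+\epsilon)=\sin x\cos\epsilon+\cos x\sin\epsilon$, $\cos(x+\epsilon)=\cos x\cos\epsilon-\sin x\sin\epsilon$ and $\exp(x+\epsilon)=\exp x\,\exp\epsilon$, feeding in the infinitesimal-angle series $\sin\epsilon$, $\cos\epsilon$, $\exp\epsilon$ computed at the ends of those same examples; either way one obtains the same sequences.

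Next I would subtract $\sin(\mathbf{x})=\langle\widehat{\sin(x)},0,0,\dots\rangle$ (and likewise for $\cos$, $\exp$). Since $\sin(\mathbf{x})$ and $\sin(\mathbf{x}+\mathbf{\epsilon})$ agree in the standard slot, the difference is supported on strictly positive indices: $\sin(\mathbf{x}+\mathbf{\epsilon})-\sin(\mathbf{x})=\langle\widehat{0},\cos(x),-\tfrac{1}{2!}\sin(x),-\tfrac{1}{3!}\cos(x),\dots\rangle$. Dividing by $\mathbf{\epsilon}$ is then legitimate and easy: by \Cref{def:AdditionAndMultiplicationInR^<Z} the number $\mathbf{\epsilon}$ is the monomial $\epsilon^{1}$, invertible with $\mathbf{\epsilon}^{-1}=\mathbf{\omega}=\epsilon^{-1}$, so multiplying by $\mathbf{\epsilon}^{-1}$ decrements every exponent by one, i.e.\ shifts the whole sequence one place toward the standard slot; because the numerator carries no term of exponent $\le 0$, the quotient is again a genuine element of $\Rzl$ (with empty infinite part) and equals $\langle\widehat{\cos(x)},-\tfrac{1}{2!}\sin(x),-\tfrac{1}{3!}\cos(x),\dots\rangle$. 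The $\cos$ and $\exp$ cases are identical in form. Finally, $\texttt{St}$ reads off the standard coordinate, giving $\cos(x)$, $-\sin(x)$, $\exp(x)$; since $\texttt{St}(\mathbf{x})=x$ and $\sin,\cos,\exp$ are defined through the same power series, these are $\texttt{St}(\cos(\mathbf{x}))$, $\texttt{St}(-\sin(\mathbf{x}))$, $\texttt{St}(\exp(\mathbf{x}))$, which is what the three displayed equalities assert (and an equality on the nose with $\cos(\mathbf{x})$, $-\sin(\mathbf{x})$, $\exp(\mathbf{x})$ when $\mathbf{x}$ is real).

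The only genuinely delicate point is the one already absorbed into \Cref{table:sinx+epsilon,table:cosx+epsilon,table:expx+epsilon}: after expanding each $(\mathbf{x}+\mathbf{\epsilon})^{k}$ by the binomial theorem and regrouping by powers of $\mathbf{\epsilon}$, one must check that the coefficient of each $\mathbf{\epsilon}^{k}$ is a convergent real series and that it sums to precisely $\pm\tfrac{1}{k!}$ times $\sin x$, $\cos x$ or $\exp x$ — i.e.\ that this rearrangement of a doubly-indexed sum is valid coordinate by coordinate. I would either cite those tables directly or, for a self-contained argument, first prove the three addition formulas in $\Rzl$ (which localizes the rearrangement to a product of two convergent power series) and then invoke the known expansions of $\sin\epsilon$, $\cos\epsilon$ and $\exp\epsilon$. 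After that, everything is the index-shift bookkeeping described above, and the passage to $\texttt{St}$ is immediate.
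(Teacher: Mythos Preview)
Your proposal is correct and matches the paper's approach: the paper offers no separate proof but simply states the proposition follows ``from the preceding discussion,'' i.e.\ from the expansions in \Cref{ex:sinx+epsilon,ex:cosx+epsilon,ex:expx+epsilon} and their tables with $a=1$, followed by the subtraction and the division-by-$\mathbf{\epsilon}$ index shift you describe. Your added remark about the addition formulas is a harmless alternative route to the same expansions.
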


\subsection{Continuity}

In this subsection, we try to pinpoint what the good definition for continuous functions is. We also decide whether we can permeate it between $\R$ and $\Rzl$. Note that if the domain and codomain of a function is not explicitly stated, they will be determined from the specified model.

\begin{definition}[$\text{ED}_\text{CLASS}$]
	\label{def:ED_CLASS}
	A function $f:\R\rightarrow\R$ is continuous at a point $a\in\R$ if, given $n\in\mathbb{N}$, there exists a $m\in\mathbb{N}$ such that
	\begin{center}
		$|f(x)-f(a)|<\frac{1}{n}$ whenever $|x-a|<\frac{1}{m}$. 
	\end{center}
	The function $f$ is called continuous on an interval $I$ iff $f$ is continuous at every point in $I$.
\end{definition}

\begin{definition}[$\text{ED}$]
	\label{def:ED}
	A function $f$ is continuous at a point $\mathbf{c}\in\Rzl$ if, given $\mathbf{\einf_1}\in\Rzl>\textbf{0}$, there exists a $\einf_2\in\Rzl>\textbf{0}$ such that
	\begin{center}
	$|\mathbf{f(x)}-\mathbf{f(c)}|<\mathbf{\einf_1}$ whenever $|\mathbf{x}-\mathbf{c}|<\mathbf{\einf_2}$.
	\end{center}
	That function $\mathbf{f}$ is called continuous function over an interval $I$ iff $\mathbf{f}$ is continuous at every point in $I$.
\end{definition}

\begin{proposition}
	\label{prop:fedclassnoted}
	There exists a function $f$ in $\Rzl$ which is continuous under \Cref{def:ED}, but discontinuous under \Cref{def:ED_CLASS}, i.e.
	\begin{center}
		$\Rzl\models\exists f$ s.t. $(\textnormal{ED}_\textnormal{CLASS}(f)\land\neg\textnormal{ED}(f))$.
	\end{center}
\end{proposition}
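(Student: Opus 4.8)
The plan is to produce a single witness function $f:\Rzl\to\Rzl$ whose total oscillation is infinitesimal. Such an $f$ is automatically $\textnormal{ED}_\textnormal{CLASS}$-continuous, since \Cref{def:ED_CLASS} only ever controls the variation of $f$ against the radii $\tfrac1n$, and these are never infinitesimal; but the same $f$ can still fail $\textnormal{ED}$-continuity, because \Cref{def:ED} demands control against \emph{every} positive radius of $\Rzl$, infinitesimal ones included. The natural candidate is the step function
\begin{align*}
	f(\mathbf{x})=\begin{cases}\mathbf{\epsilon}^2,&\textnormal{if }\mathbf{x}>\textbf{0},\\\textbf{0},&\textnormal{if }\mathbf{x}\leq\textbf{0},\end{cases}
\end{align*}
i.e.\ a jump of infinitesimal height $\mathbf{\epsilon}^2$ at the origin; this is a well-defined two-valued function of the same piecewise kind as the $\sign$ function treated above (a one-point ``spike'' $f(\mathbf{x})=\mathbf{\epsilon}^2$ for $\mathbf{x}=\textbf{0}$ and $\textbf{0}$ otherwise would serve just as well).

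First I would check that $\Rzl\models\textnormal{ED}_\textnormal{CLASS}(f)$. Because $f$ assumes only the two values $\textbf{0}$ and $\mathbf{\epsilon}^2$, for every $\mathbf{x},\mathbf{a}\in\Rzl$ one has $\abs{f(\mathbf{x})-f(\mathbf{a})}\in\{\textbf{0},\mathbf{\epsilon}^2\}$, and $\mathbf{\epsilon}^2<\tfrac1n$ for every $n\in\mathbb{N}$ because $\epsilon^2$ is infinitesimal whereas $\tfrac1n$ is appreciable. Hence $\abs{f(\mathbf{x})-f(\mathbf{a})}<\tfrac1n$ holds for \emph{all} $\mathbf{x}$, so $m=1$ witnesses continuity at every point $\mathbf{a}$ for every $n$, and $\textnormal{ED}_\textnormal{CLASS}(f)$ holds.

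Next I would verify $\neg\textnormal{ED}(f)$ by exhibiting a point of $\textnormal{ED}$-discontinuity, namely $\mathbf{c}=\textbf{0}$ (note $f(\textbf{0})=\textbf{0}$). Take $\einf_1:=\mathbf{\epsilon}^2>\textbf{0}$, and let $\einf_2\in\Rzl$ with $\einf_2>\textbf{0}$ be arbitrary. Since $\Rzl$ is an ordered field, $\mathbf{x}:=\einf_2/\textbf{2}$ satisfies $\textbf{0}<\mathbf{x}<\einf_2$, so $\abs{\mathbf{x}-\textbf{0}}<\einf_2$; but $\mathbf{x}>\textbf{0}$ gives $f(\mathbf{x})=\mathbf{\epsilon}^2$, hence $\abs{f(\mathbf{x})-f(\textbf{0})}=\mathbf{\epsilon}^2=\einf_1$, which is not $<\einf_1$. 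Thus no positive $\einf_2$ works for this $\einf_1$, so $f$ is not $\textnormal{ED}$-continuous at $\textbf{0}$, and therefore $\Rzl\models\exists f\,(\textnormal{ED}_\textnormal{CLASS}(f)\wedge\neg\textnormal{ED}(f))$.

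The proof is short; the real content is spotting the right witness — a discontinuity whose jump is infinitesimal, hence invisible to the $\tfrac1n$–$\tfrac1m$ definition but visible to the $\einf_1$–$\einf_2$ one. The only point needing minor care is the comparison $\mathbf{\epsilon}^2<\tfrac1n$ for all $n\in\mathbb{N}$, which follows from \Cref{def:NumbersInWidehatR} together with the lexicographic order on $\Rzl$ (the standard part of $\tfrac1n$ is $\tfrac1n>0$ while that of $\epsilon^2$ is $0$); the rest is routine unwinding of the quantifiers in Definitions~\ref{def:ED_CLASS} and~\ref{def:ED}, and since the statement is a single existential, this one $f$ is all that is required.
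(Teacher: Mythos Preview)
Your proof is correct and uses a genuinely different witness from the paper's. The paper takes $f=\mathds{1}_\Delta$, the indicator function of the infinitesimal halo $\Delta=\{x:\forall n\in\mathbb{N},\,\abs{x}<\tfrac1n\}$, whereas you use a step function with an \emph{infinitesimal} jump $\mathbf{\epsilon}^2$ at the origin. Your choice makes the $\textnormal{ED}_\textnormal{CLASS}$ verification immediate: since the total oscillation of your $f$ is $\mathbf{\epsilon}^2<\tfrac1n$ for every $n\in\mathbb{N}$, the condition holds with $m=1$ at every point, uniformly. By contrast $\mathds{1}_\Delta$ has jumps of height $1$, so its $\textnormal{ED}_\textnormal{CLASS}$-continuity is not automatic --- and indeed appears problematic at $a=\textbf{0}$: for $n=2$ and any $m\in\mathbb{N}$, the point $x=\tfrac{1}{2m}$ satisfies $\abs{x-0}<\tfrac1m$ yet is not infinitesimal, so $\abs{\mathds{1}_\Delta(x)-\mathds{1}_\Delta(0)}=1\not<\tfrac12$. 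Your approach sidesteps this entirely by making the jump infinitesimal rather than appreciable, which is exactly the mechanism the statement is trying to exhibit; the $\neg\textnormal{ED}$ part then follows from the same idea as in the paper, namely that no positive $\einf_2$ can avoid crossing the jump.
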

\begin{proof}
	Suppose that $\Delta=\{x\mid\forall n\in\mathbb{N},\abs{x}<\frac{1}{n}\}$ -- in other words, $\Delta$ is a set of all infinitesimals -- and consider the indicator function around $\Delta$, that is
	\begin{align*}
		\mathds{1}_\Delta(x)=
		\begin{cases}
			1, & x\in\Delta\\
			0, & \textnormal{otherwise.}
		\end{cases}
	\end{align*}
	Then $\textnormal{ED}_\textnormal{CLASS}(\mathds{1}_\Delta)$ but $\neg\textnormal{ED}(\mathds{1}_\Delta)$.
\end{proof}

\begin{remark}
	Note that:
	\begin{enumerate}
		\item The set $\Delta$ in $\R$ only has 0 as its member. That is $\R\models\Delta=\{0\}$.
		\item In $\R$, both Definitions \ref{def:ED_CLASS} and \ref{def:ED} are equivalent, that is for any function $f$, $\R\models\textnormal{ED}_\textnormal{CLASS}(f)\leftrightarrow\textnormal{ED}(f)$.
	\end{enumerate}
\end{remark}

%We want to show that $\mathbf{f}$ is continuous at every point $\mathbf{p}$ in $\Rzl$. Here we will have two cases: either $\mathbf{p}\in\Delta$ or $\mathbf{p}\notin\Delta$.
%\begin{enumerate}
%	\item Suppose that $\mathbf{p}\in\Delta$.
%	\item Suppose that $\mathbf{p}\notin\Delta$.
%\end{enumerate}

\begin{property}[$\text{EVP}$]
	\label{prop:EVP}
	If $I$ is an interval and $f:I\rightarrow J$, we say that $f$ has the extreme value property iff $f$ has its maximum value on $I$. That is,
	\begin{center}
		$\forall a\leq b\in I$, $\exists x\in[a,b]$ s.t. $\forall y\in[a,b](f(y)\leq f(x))$.
	\end{center}
\end{property}

%\begin{property}[$\text{EVP}_\Rzl$]
%	If $I$ is an interval and $\mathbf{f}:I\rightarrow\Rzl$, we say that $\mathbf{f}$ has the extreme value property iff $\mathbf{f}$ has its maximum value on that interval $I$. That is, $\forall \mathbf{a}\leq \mathbf{b}\in I$, $\exists \mathbf{x}\in[a,b]\forall \mathbf{y}\in[\mathbf{a},\mathbf{b}]$ such that $\mathbf{f(y)}\leq \mathbf{f(x)}$.
%\end{property}

\begin{property}[$\text{IVP}$]
	\label{prop:IVP}
	If $I$ is an interval, and $f:I=[a,b]\rightarrow J$, we say that $f$ has the intermediate value property iff
	\begin{center}
		$\forall c'\in(f(a),f(b))$, $\exists c\in(a,b)$ s.t. $f(c)=c'$.
	\end{center} 
\end{property}

%\begin{property}[$\text{IVP}_\Rzl$]
%	\label{prop:IVPRZ<}
%	If $I$ is an interval, and $\mathbf{f}:I\rightarrow\Rzl$, we say that $\mathbf{f}$ has the intermediate value property iff whenever $\mathbf{a}<\mathbf{b}$ are points in $I$ and $\mathbf{f(a)}< \mathbf{c}<\mathbf{f(b)}$, there is a $\mathbf{d}$ between $\mathbf{a}$ and $\mathbf{b}$ such that $\mathbf{f(d)}=\mathbf{c}$.
%\end{property}

\begin{theorem}
	$\R\models\textnormal{ED}\rightarrow\textnormal{EVP}$
\end{theorem}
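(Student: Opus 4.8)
The plan is to recognise this as the classical Extreme Value Theorem, read off inside the model $\R$. First I would observe that the whole statement lives in the target model $\R$ --- one of our \emph{consistent} chunks --- so classical logic and the entire standard toolkit of real analysis are legitimately available here; in particular, by the remark immediately following \Cref{prop:fedclassnoted}, $\textnormal{ED}$ and $\textnormal{ED}_\textnormal{CLASS}$ coincide on $\R$, and since $\R$ is Archimedean the $\tfrac{1}{n}$/$\tfrac{1}{m}$ formulation of \Cref{def:ED_CLASS} is equivalent to ordinary $\varepsilon$--$\delta$ continuity. So it suffices to show: if $f$ is continuous on an interval $I$, then for every $a\le b$ in $I$ the restriction $f|_{[a,b]}$ attains a maximum, which is exactly what \Cref{prop:EVP} demands.

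Fix $a\le b$ in $I$. I would proceed in two steps. Step one: $f$ is bounded above on $[a,b]$. Argue by contradiction --- if not, choose $x_n\in[a,b]$ with $f(x_n)>n$; by Bolzano--Weierstrass (a consequence of the completeness of $\R$) the bounded sequence $(x_n)$ has a subsequence $x_{n_k}\to c$, and since $[a,b]$ is closed, $c\in[a,b]$; continuity forces $f(x_{n_k})\to f(c)$, so $(f(x_{n_k}))$ is bounded, contradicting $f(x_{n_k})>n_k\to\infty$. Step two: let $M=\sup\{f(x):x\in[a,b]\}$, which exists by completeness since the set is non-empty and bounded above. I would then finish in either of two ways: pick $x_n\in[a,b]$ with $f(x_n)>M-\tfrac{1}{n}$, pass to a convergent subsequence $x_{n_k}\to x^{\ast}\in[a,b]$, and squeeze using continuity to get $f(x^{\ast})=M$; or, more slickly, suppose $f$ never equals $M$, so that $x\mapsto 1/(M-f(x))$ is continuous on $[a,b]$ and hence bounded by Step one, which caps $f$ strictly below $M$ and contradicts the definition of the supremum. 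Either way $f|_{[a,b]}$ attains its maximum at some $x^{\ast}\in[a,b]$, establishing $\textnormal{EVP}$.

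The only genuine work is Step one, and within it the appeal to sequential compactness of closed bounded intervals; but the completeness of $\R$ is exactly what powers Bolzano--Weierstrass, so there is no real obstacle here --- the argument is the textbook proof of the Extreme Value Theorem. The one point I would be careful to state explicitly is the conceptual one: the implication is asserted of the model $\R$, where classical reasoning is sound, so $\textnormal{ED}$ must be interpreted through its $\R$-reading (collapsing it to ordinary continuity) before the classical proof is invoked.
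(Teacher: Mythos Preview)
Your proposal is correct and is precisely the standard textbook argument the paper has in mind: the paper's own proof simply defers to a reference (Bartle--Sherbert), and you have supplied those details faithfully, including the key observation that in the model $\R$ the definitions $\textnormal{ED}$ and $\textnormal{ED}_\textnormal{CLASS}$ collapse to ordinary continuity. There is nothing to add.
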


\begin{proof}
	The proof of this theorem can be found in any standard book for Analysis course (in \cite{bartle1992introduction} for example).
%	Since $f$ is continuous, then $f$ has the least upper bound $S$. Suppose that that there is no $c\in I$ such that $f(c)=S$ and therefore, $f(x)<S$ $\forall x\in I$. Now define a function $g$ by
%	\begin{align*}
%	g(x)=\frac{1}{S-f(x)}.
%	\end{align*}
%	Note that $g(x)>0$ for every $x\in I$ and that $g$ is continuous and bounded on that interval $I$. Therefore, $\exists H>0$ such that $g(x)\leq H$ $\forall x\in I$. Since for each $x\in I$, $g(x)=\frac{1}{S-f(x)}\leq H$ and this is equivalent to $f(x)\leq S-\frac{1}{H}$, we have contradicted the fact that $S$ is the least upper bound of $f$ on $I$. The same concept can be used for every subinterval of $I$.
\end{proof}

\begin{theorem}
	\label{thm:edRZ<NotImpliesEVPRZ<}
	There is a function $f$ such that
	\begin{center}
		$\Rzl\models\textnormal{ED}(f)\land\lnot\textnormal{EVP}(f)$.
	\end{center}
\end{theorem}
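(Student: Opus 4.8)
The plan is to exhibit a concrete function on a bounded interval of $\Rzl$ that satisfies the (weak, $\einf$-style) continuity notion $\textnormal{ED}$ but fails to attain a maximum, thereby witnessing $\Rzl\models\textnormal{ED}(f)\land\lnot\textnormal{EVP}(f)$. The natural candidate is built from the standard-part map: on the interval $I=[\textbf{0},\textbf{1}]$, set $f(\mathbf{x})=\texttt{St}(\mathbf{x})$. Geometrically this "collapses" each monad to a point, so it is extremely well-behaved with respect to the coarse metric $\diss$ but is wildly non-injective. The key observation is that $\textnormal{ED}$ as stated in \Cref{def:ED} quantifies over $\einf_1,\einf_2\in\Rzl>\textbf{0}$ — it is the \emph{strong} continuity requirement — whereas the failure of $\textnormal{EVP}$ comes from the existence of infinitesimal "gaps" that no $\texttt{St}$-level maximum can fill. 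Actually, re-examining which direction is easier: a cleaner choice may be $f(\mathbf{x})=\mathbf{x}$ restricted to the set $\{\mathbf{x}\in[\textbf{0},\textbf{1}] : \texttt{St}(\mathbf{x})<1\}$, i.e. an interval that is "open at the top in the St-direction but closed in the $\einf$-direction"; but since \Cref{prop:IVP} and \Cref{prop:EVP} are phrased with closed intervals $[a,b]$, I would instead keep $I=[\textbf{0},\textbf{1}]$ genuinely closed and put the pathology into $f$ itself.

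Here is the construction I would actually carry out. Take $I=[\textbf{0},\textbf{1}]$ and define
\[
f(\mathbf{x})=
\begin{cases}
\mathbf{x}, & \texttt{St}(\mathbf{x})<1,\\[2pt]
\textbf{0}, & \texttt{St}(\mathbf{x})=1.
\end{cases}
\]
The idea is that on the "bulk" of the interval $f$ is the identity, so it climbs toward values with standard part approaching $1$, but at the single monad around $\textbf{1}$ it is reset to $\textbf{0}$. Then the supremum of the $\texttt{St}$-values of $f(I)$ is $1$, but no point of $I$ maps to anything with standard part $1$: points with $\texttt{St}(\mathbf{x})<1$ have $f(\mathbf{x})=\mathbf{x}$ with $\texttt{St}(f(\mathbf{x}))<1$, and the monad at $\textbf{1}$ maps to $\textbf{0}$. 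Hence there is no $\mathbf{x}\in I$ with $f(\mathbf{y})\le f(\mathbf{x})$ for all $\mathbf{y}$: given any candidate $\mathbf{x}_0$ with $\texttt{St}(f(\mathbf{x}_0))=s<1$, pick $\mathbf{y}_0$ with $\texttt{St}(\mathbf{y}_0)\in(s,1)$, so $f(\mathbf{y}_0)=\mathbf{y}_0>f(\mathbf{x}_0)$; and the reset value $\textbf{0}$ is also not a maximum. This gives $\lnot\textnormal{EVP}(f)$.

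For $\textnormal{ED}(f)$, the verification splits into two cases at a point $\mathbf{c}\in I$. If $\texttt{St}(\mathbf{c})<1$, then on a whole $\einf$-ball around $\mathbf{c}$ (small enough that all its points still have standard part $<1$, which is possible precisely because $\einf$-balls only perturb the infinitesimal tail and leave $\texttt{St}$ fixed) the function equals the identity, which is trivially $\textnormal{ED}$-continuous — given $\einf_1>\textbf{0}$ take $\einf_2=\min(\einf_1,\einf)$. If $\texttt{St}(\mathbf{c})=1$, then $f(\mathbf{c})=\textbf{0}$, and for $\mathbf{x}$ in a sufficiently small $\einf$-ball around $\mathbf{c}$ we again have $\texttt{St}(\mathbf{x})=1$, so $f(\mathbf{x})=\textbf{0}=f(\mathbf{c})$ and $|f(\mathbf{x})-f(\mathbf{c})|=\textbf{0}<\einf_1$ for any $\einf_1>\textbf{0}$. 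So $\textnormal{ED}(f)$ holds at every point of $I$. The one subtlety — and the step I expect to need the most care — is confirming that an $\einf$-ball around a point $\mathbf{c}$ with $\texttt{St}(\mathbf{c})=1$ really does consist only of points with standard part exactly $1$ (as opposed to $1$ minus an infinitesimal, which would sit in the first case); this follows from the lexicographic/metric structure recorded around \Cref{def:delta}, since $\einf\in\Delta^{\underdownarrow{m}}$ means the radius is purely infinitesimal and cannot bridge the appreciable gap down to standard part below $1$, but it should be stated explicitly. A secondary point to nail down is that $f$ is well-defined as a function on $\Rzl$ in the sense of the paper's conventions (it is defined casewise on $\texttt{St}(\mathbf{x})$, exactly as the $\sign$ example earlier), and that the interval $[\textbf{0},\textbf{1}]$ and the relation $\le$ used in $\textnormal{EVP}$ are the lexicographic ones, so that "$\mathbf{y}_0>f(\mathbf{x}_0)$" genuinely holds once the standard parts are ordered as claimed.
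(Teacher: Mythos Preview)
Your proof is correct and takes a genuinely different route from the paper. The paper's witness is a Thomae-type function on $[1,2]$,
\[
f(x)=\begin{cases}\dfrac{1}{n} & x\sim\dfrac{m}{n}\ \text{(reduced fraction)},\\[4pt] 0 & \text{otherwise},\end{cases}
\]
whereas you use the identity on $[\textbf{0},\textbf{1}]$ reset to $\textbf{0}$ on the monad of $\textbf{1}$. Both arguments rest on the same structural fact about $\Rzl$: since $\einf_2$ in \Cref{def:ED} may be chosen infinitesimal, an $\einf_2$-ball never leaves the monad of its centre, so any function that is constant on each monad (the paper's choice) or equal to the identity on each monad (yours) is automatically $\textnormal{ED}$-continuous. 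Your example is more elementary and makes the failure of $\textnormal{EVP}$ immediate---the values climb through every standard part below $1$ but the monad at $\textbf{1}$ is flattened to $\textbf{0}$, so no maximum exists. The paper's example is chosen in part because it feeds the subsequent remark on infinitesimal fractals; yours would not serve that secondary purpose but establishes the theorem more directly. The two points you flag as needing care (that an infinitesimal ball around $\mathbf{c}$ consists only of points with $\texttt{St}(\mathbf{x})=\texttt{St}(\mathbf{c})$, and that the casewise definition via $\texttt{St}$ is legitimate in the paper's sense) are exactly right and are handled the same way the paper handles its $\sign$ and indicator-function examples.
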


\begin{proof}
	Take the function $f$ on $[1,2]$ as defined below:	
	\begin{align*}
	f(x)=
	\begin{cases}
	\frac{1}{n} & x\sim\frac{m}{n}\textnormal{ (reduced fraction)} \\
	0 & \text{otherwise}.
	\end{cases}
	\end{align*}
\end{proof}

\begin{remark}
	This research now reaches an especially engrossing object. The function $f(x)$ in \Cref{thm:edRZ<NotImpliesEVPRZ<} can be used to construct a fractal-like object. Fractals are classically defined as geometric objects that exhibit some form of self-similarity. Figure \ref{fig:zoom} shows what the function $f(x)$ in \Cref{thm:edRZ<NotImpliesEVPRZ<} looks like, and also what occurs when we zoom in on a particular point. In this sense, the function from Theorem \ref{thm:edRZ<NotImpliesEVPRZ<} is an  \textit{infinitesimal fractal}. Formally speaking, suppose that we have a function $f:\Rzl\rightarrow\R$ and let us define another function
	
\begin{align*}
	F:\Rzl\rightarrow\Rzl
\end{align*}
by
\begin{align*}
	\begin{aligned}
		F(x) &= f(x)+\epsilon f(x)+\epsilon^2 f(x)+\dots  \\
		&= \langle \widehat{f(x)},f(x),f(x),\dots\rangle.
	\end{aligned}
\end{align*}
Then, that function $F(x)$ will define an \textit{infinite fractal} (if $\texttt{ni}_\omega(\frac{1}{\epsilon}\widehat{\times}F(x))=F(x)$) or \textit{infinitesimal fractals} (if $\texttt{ni}_\omega(\epsilon\widehat{\times}F(x))=F(x)$), where $\texttt{ni}_\omega(\mathbf{x})$ denotes the non-infinity part of $\mathbf{x}$.

\begin{figure}
%	\centering
	\includegraphics[width=0.3\textwidth]{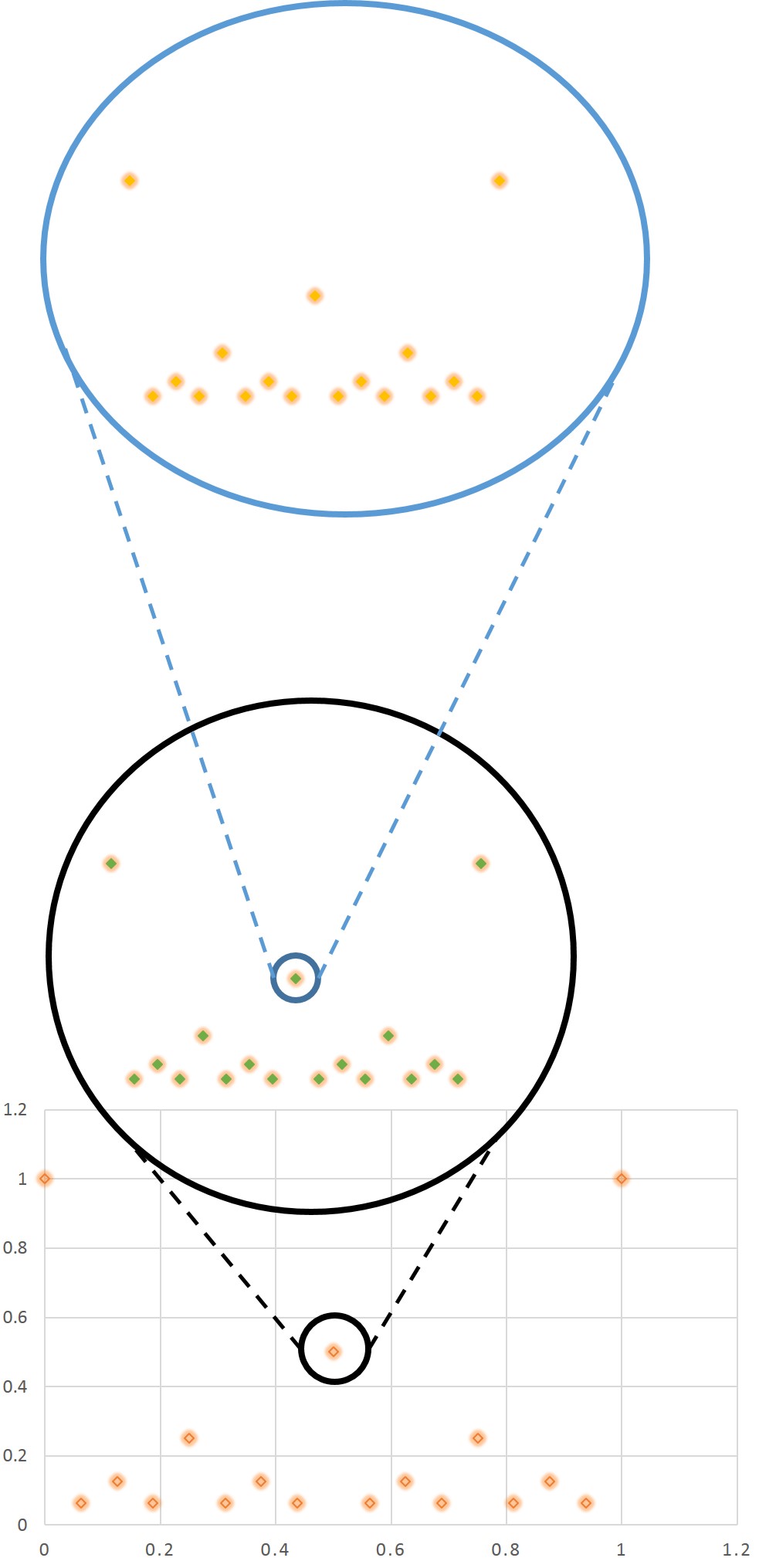}
	\caption{Illustration of the infinitesimal fractal from the function defined in Theorem \ref{thm:edRZ<NotImpliesEVPRZ<}}
	\label{fig:zoom}
\end{figure}
\end{remark}

Figure \ref{fig:RelationshipAmongThreeDefinitionsofContinuity} shows the relationship among the three definitions of continuity in both $\R$ and $\Rzl$. The proof of each of the relations there can be seen in \cite[p.~94--98]{nugraha2018naive}.

\begin{figure}[h]
	\subfloat[In $\R$]{
		\begin{tikzpicture}[scale=0.8, every node/.style={transform shape}]
			\path       node (EVP) {$\textnormal{EVP}$}
			(0:4cm)   node (IVP) {$\textnormal{IVP}$}
			(-60:4cm) node (ED) {$\textnormal{ED}$};
			\path[-stealth]
			(EVP.315) edge [strike through] (ED.105)
			(ED.135) edge (EVP.285)
			(IVP.255) edge [strike through] (ED.45)
			(ED.75)  edge (IVP.225) 
			(EVP.350) edge [strike through] (IVP.190)
			(IVP.165) edge [strike through] (EVP.15);
		\end{tikzpicture}
	}
%	~ 
	\subfloat[In $\Rzl$]{
		\begin{tikzpicture}[scale=0.8, every node/.style={transform shape}]
		\path       node (EVPRZ) {$\textnormal{EVP}$}
		(0:4cm)   node (IVPRZ) {$\textnormal{IVP}$}
		(-60:4cm) node (EDRZ) {$\textnormal{ED}$};
		\path[-stealth]
		(EVPRZ.315) edge [strike through] (EDRZ.105)
		(EDRZ.135) edge [strike through] (EVPRZ.285)
		(IVPRZ.255) edge [strike through] (EDRZ.45)
		(EDRZ.75)  edge [strike through] (IVPRZ.225) 
		(EVPRZ.350) edge [strike through] (IVPRZ.190)
		(IVPRZ.165) edge [strike through] (EVPRZ.15);
		\end{tikzpicture}
	}
	\caption{Relationship among the three definitions of continuity}
	\label{fig:RelationshipAmongThreeDefinitionsofContinuity}
\end{figure}
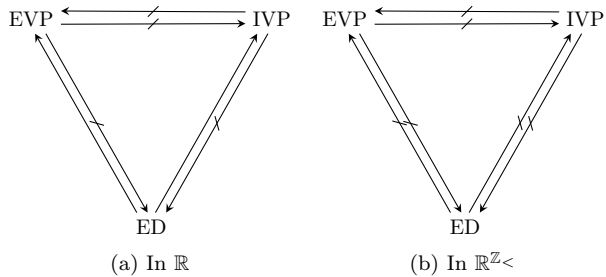

The obvious question worth asking is how do we define continuity in our set $\Rzl$. As seen before, there are three possible ways to define it, namely: with the $\epsilon$-$\delta$ definition (ED), with extreme value property (EVP), or with the intermediate value property (IVP). We will now discuss them one by one. \textit{Firstly}, through IVP. The IVP basically says that for every value within the range of the given function, we can find a point in the domain corresponding to that value. Will this work in our set $\Rzl$? Let us consider the $\Rzl$-valued function $f(x)=x^2$ on $[a,b]$ for any $a,b\in\Rzl$ and let us assume that IVP holds. It follows that for every $c'$ between $f(a)=a^2$ and $f(b)=b^2$, $\exists c\in(a,b)$ such that $f(c)=c^2=c'$. The only $c$ which satisfies that last equation is $c=\sqrt{c'}$, which cannot be defined in our set $\Rzl$. Thus, IVP, even though it is somehow intuitively ``obvious'', it does not work in $\Rzl$. This phenomenon is actually not uncommon if we want to have a world with infinitesimals (or infinities) in it. See \cite[p.~107]{bell1998primer} for example.

However, note that in $\R$, the function $x^2$ still satisfies IVP. Now, is there a function in $\Rzl$ that satisfies IVP? Consider the identity function $f(x)=x$. This function clearly satisfies IVP in both domains, and so we have the following theorem.

\begin{theorem}
	There exists a function $f$ such that $(\R\models\textnormal{IVP}(f)\land\Rzl\models\lnot\textnormal{IVP}(f))$, and there exists a function $g$ s.t. $\R,\Rzl\models\textnormal{IVP}(g)$.
\end{theorem}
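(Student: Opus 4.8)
The plan is to use the two functions already singled out in the discussion preceding the theorem: $f(\mathbf{x})=\mathbf{x}^{\mathbf{2}}$ for the first claim and the identity $g(\mathbf{x})=\mathbf{x}$ for the second. In both cases the work in $\R$ is just the classical intermediate value theorem, so the content is entirely on the $\Rzl$ side.

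First I would dispatch $g(\mathbf{x})=\mathbf{x}$. In $\R$, $\mathrm{IVP}(g)$ is immediate (it is the classical IVT, or one can argue directly: $g$ maps $[a,b]$ onto $[a,b]$). Working in $\Rzl$ on an interval $[a,b]$, fix any $c'\in(g(a),g(b))=(a,b)$; since $g(c')=c'$ and $c'\in(a,b)$, the witness $c=c'$ establishes $\mathrm{IVP}(g)$. Hence $\R,\Rzl\models\mathrm{IVP}(g)$.

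Next I would handle $f(\mathbf{x})=\mathbf{x}^{\mathbf{2}}$. In $\R$, $\mathrm{IVP}(f)$ holds by the classical IVT applied to the continuous function $x\mapsto x^{2}$. For $\Rzl\models\lnot\mathrm{IVP}(f)$ I would take the concrete interval $[\mathbf{0},\mathbf{1}]$ with codomain $\Rzl$, so $f(\mathbf{0})=\mathbf{0}$ and $f(\mathbf{1})=\mathbf{1}$, and pick the intermediate value $c'=\mathbf{\epsilon}\in(\mathbf{0},\mathbf{1})$. If $\mathrm{IVP}(f)$ held there would be $\mathbf{c}\in(\mathbf{0},\mathbf{1})$ with $\mathbf{c}^{\mathbf{2}}=\mathbf{\epsilon}$; writing $\mathbf{c}=\sum_{i\ge -m}a_i\epsilon^{i}$ with least nonzero coefficient $a_k$, \Cref{def:AdditionAndMultiplicationInR^<Z} shows the lowest-order term of $\mathbf{c}^{\mathbf{2}}$ is $a_k^{2}\epsilon^{2k}$, which would force $2k=1$, impossible for an integer $k$. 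So $\sqrt{\mathbf{\epsilon}}\notin\Rzl$ and $\mathrm{IVP}(f)$ fails. Combining the two cases yields the theorem.

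The only genuine obstacle is making the non-existence of $\sqrt{\mathbf{\epsilon}}$ airtight, i.e. pinning down that members of $\Rzl$ are semi-infinite series in $\epsilon$ with \emph{integer} exponents only, so that multiplication (hence squaring) can never produce a term of odd least order $\epsilon^{1}$. Everything else is a routine verification.
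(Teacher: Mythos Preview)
Your proposal is correct and follows essentially the same approach as the paper: the discussion preceding the theorem already identifies $f(\mathbf{x})=\mathbf{x}^{\mathbf{2}}$ as the witness for the first clause (via the non-existence of $\sqrt{c'}$ in $\Rzl$) and the identity $g(\mathbf{x})=\mathbf{x}$ for the second. Your only addition is making the non-existence of $\sqrt{\mathbf{\epsilon}}$ explicit through the valuation/least-index argument, which the paper leaves as the bare assertion that ``$\sqrt{c'}$ cannot be defined in our set $\Rzl$''; this is a welcome tightening rather than a different route.
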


\noindent Hence, from the argument above we also argued that defining continuity in our set with IVP is not really useful.

Secondly, in regards to EVP. This is clearly not a good way to define continuity in our set because even in the set of real numbers, there are some continuous functions which do not satisfy EVP themselves. So the last available option now is the third one, which is the $\epsilon$-$\delta$ (ED) definition. We argued that this definition is the best way to define continuity in $\Rzl$. Moreover, in this way, it preserves much of the spirit of classical analysis on $\R$ while retaining the intuition of infinitesimals.

It is important to note that in the ED definition of continuity (Definition \ref{def:ED}), there are two variables which are in play, i.e. $\einf_1$ and $\einf_2$. When we applied this definition on our set, these two variables hold important (or rather, very interesting) roles where we will have different levels of continuity from the same function. What we mean is that these two variables can greatly vary depending on how far (`deep') we want to push (observe) them, e.g. $\einf_2$ can be a real number ($\einf_2\in\Delta^0$), or it can be in $\Delta^4$, $\Delta^8$ and so on. Remember that these two numbers, $\einf_1$ and $\einf_2$, will determine how subtle we want our intervals to be (see Figure \ref{fig:Illustrationofeinf1andeinf2} for illustration).

\begin{figure}
	%put \linewidth here
	\subfloat[An $\einf_1$ bound \& its $\einf_2$ neighbourhood fulfilling Definition \ref{def:ED}]{
		\includegraphics[width=.4\linewidth]{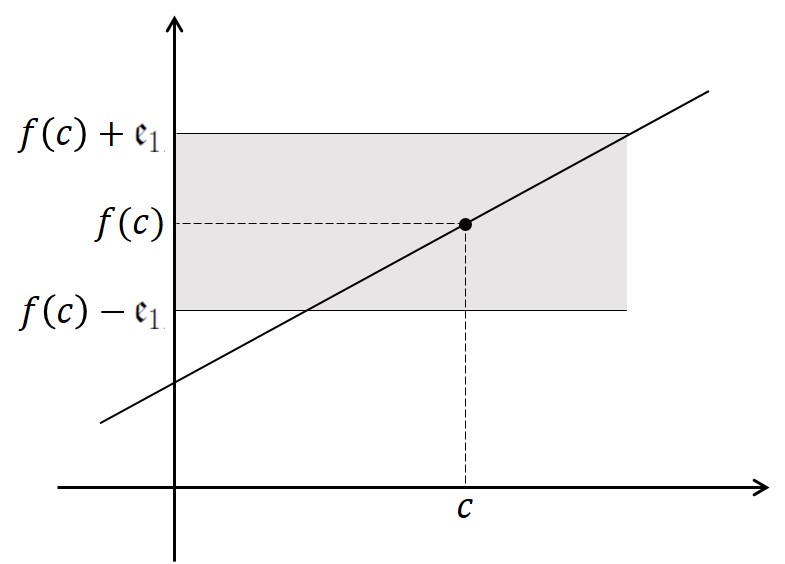}\hfill
		\includegraphics[width=.4\linewidth]{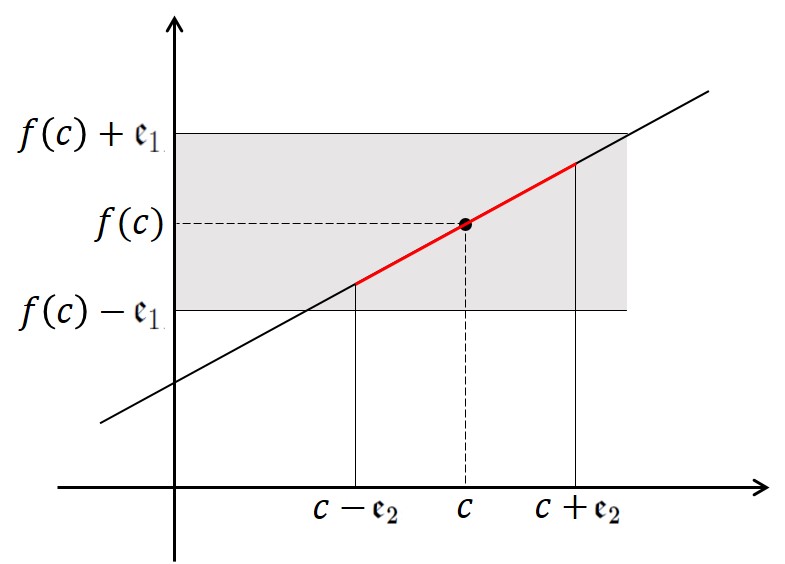}\hfill
	}
	\par\medskip
	\subfloat[A smaller bound \& its neighbourhood]{
			\includegraphics[width=.45\linewidth]{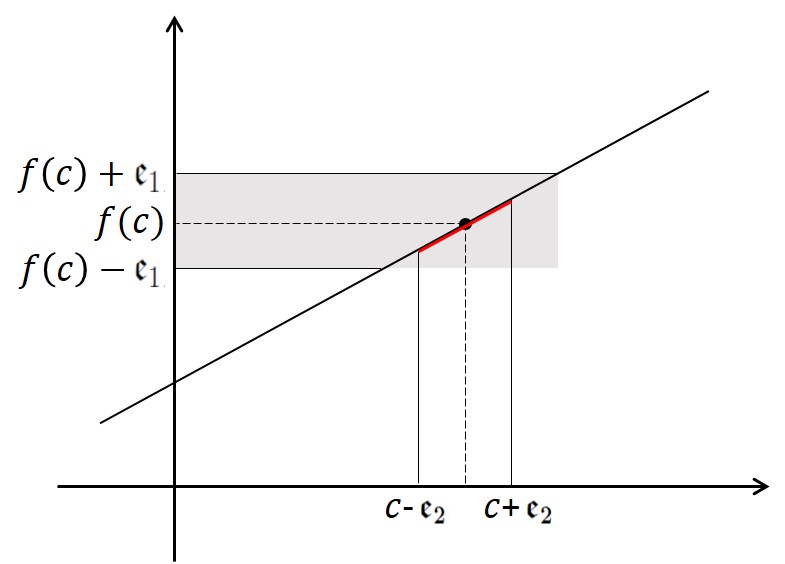}\hfill
	}
	\par\medskip
	\caption{Illustration of $\einf_1$ and $\einf_2$ intervals}
	\label{fig:Illustrationofeinf1andeinf2}
\end{figure}

%\begin{figure}
%	%put \linewidth here
%	\begin{subfigure}{}
% 		\includegraphics[width=.4\linewidth]{figures/Illustrationofeinf1andeinf2_1}\hfill
%		\includegraphics[width=.4\linewidth]{figures/Illustrationofeinf1andeinf2_2}\hfill
%		\caption{An $\einf_1$ bound \& its $\einf_2$ neighbourhood fulfilling Definition \ref{def:ED}}
%	\end{subfigure}\par\medskip
%	\begin{subfigure}{}
%		\begin{center}
%			\includegraphics[width=.45\linewidth]{figures/Illustrationofeinf1andeinf2_3}\hfill
%		\end{center}
%		\caption{A smaller bound \& its neighbourhood}
%	\end{subfigure}\par\medskip
%	
%	\caption{Illustration of $\einf_1$ and $\einf_2$ intervals}
%	\label{fig:Illustrationofeinf1andeinf2}
%\end{figure}

Thus this definition of continuity works as follows. Suppose that we have a function $f$ and we want to decide whether it is continuous or not. With this concept of two variables, we will have what we call as $(k,n)$-continuity where $k,n\in\mathbb{N}\cup\{0\}$.

\begin{definition}[$(k,n)$-Continuity]
	\label{def:k-ncontinuity}
	A function $f$ is $(k,n)$-continuous at a point $\mathbf{c}$ iff $\forall\mathbf{\einf_{1_k}}>0$, $\exists\einf_{2_n}>0$ such that
	\begin{center}
		if $|\mathbf{x}\widehat{-}\mathbf{c}|<\mathbf{\einf_{2_n}}$, then $|\mathbf{f(x)}\widehat{-}\mathbf{f(c)}|<\mathbf{\einf_{1_k}}$
	\end{center}
	where $\einf_{1_p},\einf_{2_p}\in\Delta^p$.
\end{definition}

\begin{definition}
	A function $f$ is said to be $(k,n)$-continuous iff it is $(k,n)$-continuous at every point in the given domain.
\end{definition}

\begin{remark}
	\label{remark:kncontinuityintervalsubset}
	From the definition of the set $\Delta^m$, note that for any $r\in\Rzl,$ $d\in\Delta^p,$ and $e\in\Delta^{p+1},$
	\begin{align*}
		(r-e,r+e)\subseteq(r-d,r+d).
	\end{align*}
\end{remark}

To be able to grasp a better understanding of Definition \ref{def:k-ncontinuity}, see the examples below.

\begin{example}
	\label{ex:xandx+1fork-ncontinuity}
	Consider the $\Rzl$-valued function $f(x)$ defined as follows:
	\[
	f(\mathbf{x})=
	\begin{cases} 
		\mathbf{x} & \texttt{St}(\mathbf{x})\leq 1 \\
		\mathbf{x}\widehat{+}\mathbf{1} & \textnormal{otherwise.} 
	\end{cases}
	\]
	First we need to understand clearly how this function actually works. Figure \ref{fig:xandx+1fork-ncontinuity}, where $i$ denotes an arbitrary infinitesimal number, illustrates to us what the function $f(\mathbf{x})$ looks like. Notice that at $\mathbf{x}=\mathbf{1}$, what looks like a point in real numbers is actually a (constant) line when we zoom in deep enough into $\Rzl$\footnote{We have to be really careful here because if the first condition there was $\mathbf{x}\leq1$ (instead of $\texttt{St}(\mathbf{x})\leq1$), then there would be no line there --- it will be \emph{exactly} one point.}. So how about the continuity of this function? It is obvious that $f(\mathbf{x})$ is not $0,0$-continuous (by taking, for example, $\einf_{1_0}=\frac{1}{2}$ and $\mathbf{x}=\mathbf{1.5}$). However, interestingly enough, it is $\mathit{(0,1)}$-continuous by taking $\einf_{2_1}\in\Delta^1$. Why was that? The fact that $\einf_{2_1}\in\Delta^1$ and that it has to depend on $\einf_{1_0}$ means that $\einf_{1_0}$ has to be in $\Delta^1$ as well. Now, assigning $\einf_{2_1}=\einf_{1_0}$ is sufficient to prove its $\mathit{0,1}$-continuity. 
	\begin{figure}
%		\begin{center}
			\includegraphics[width=.4\linewidth]{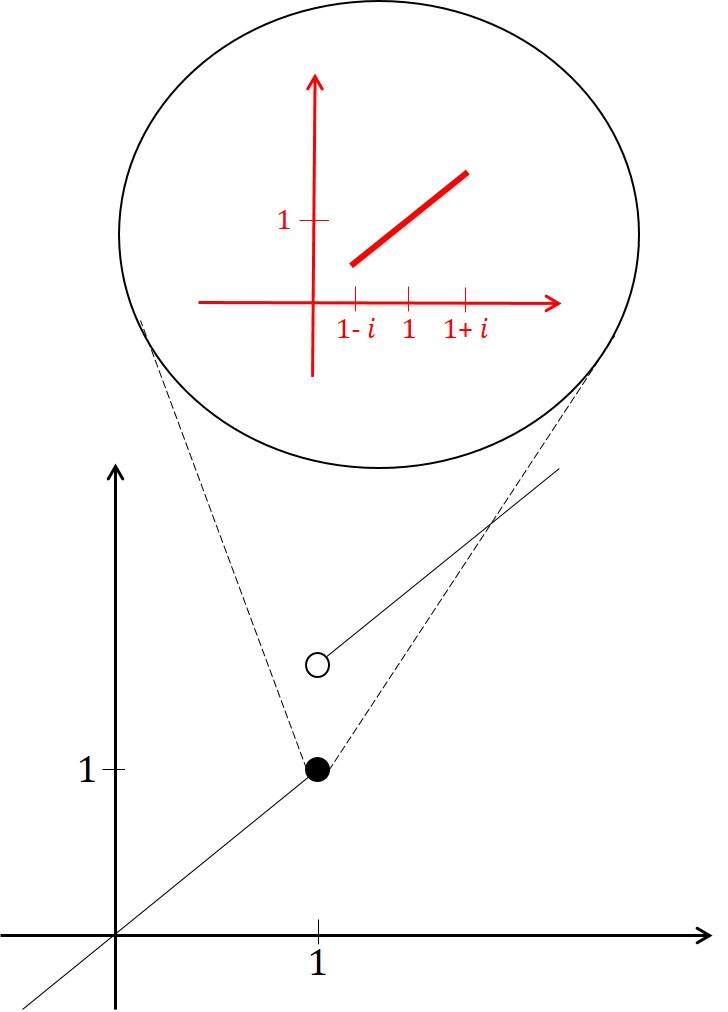}
			\caption{Illustration of Function $f(x)$ in Example \ref{ex:xandx+1fork-ncontinuity}}
			\label{fig:xandx+1fork-ncontinuity}
%		\end{center}
	\end{figure}
\end{example}

\begin{example}
	The identity function $f(\mathbf{x})=\mathbf{x}$ for all $\mathbf{x}\in\Rzl$ is $\mathit{(0,0)}$-continuous, just like in reals. However, it is not $\mathit{(1,0)}$-continuous because for any point $\mathbf{c}$, there is an $\einf_{1_1}=\mathbf{\epsilon}$ such that for every $\einf_{2_0}=r$ where $r\in\R$, $|\mathbf{x}\widehat{-}\mathbf{c}|<r$ but $f(\mathbf{x})-f(c)\geq\mathbf{\epsilon}$. In fact, identity function is $(k,n)$-continuous only when $k\leq n$, but not otherwise.
\end{example}

The next theorem below is very interesting in as much as it enables us to classify whether a function is a constant function or not by using $(k,n)$-continuity.

\begin{theorem}
	\label{thm:constantandkncont}
	For any function $f$, if $f$ is $(k,n)$-continuous for any $k,n\in\mathbb{N}\cup\{0\}$, then $f$ is a constant function.
\end{theorem}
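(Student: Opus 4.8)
The plan is to use only a fragment of the hypothesis — namely that $f$ is $(k,0)$-continuous for every $k\in\mathbb{N}\cup\{0\}$ — and to bootstrap over $k$. Everything rests on one elementary order-theoretic fact about $\Rzl$: if $\mathbf{u}\in\Rzl$ and $\abs{\mathbf{u}}<a\mathbf{\epsilon}^{k}$ for some real $a>0$, then $\mathbf{u}$ vanishes in every coordinate of index $<k$. Indeed $\abs{\mathbf{u}}$ is non-negative, so its leftmost nonzero coordinate is positive; if that coordinate sat at a position $<k$, then since $a\mathbf{\epsilon}^{k}$ is zero at all positions $<k$, the lexicographic comparison would give $\abs{\mathbf{u}}>a\mathbf{\epsilon}^{k}$, a contradiction. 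Applying this with $\mathbf{u}=f(\mathbf{x})\widehat{-}f(\mathbf{c})$, the clause ``$\abs{f(\mathbf{x})\widehat{-}f(\mathbf{c})}<\mathbf{\einf_{1_k}}$'' of Definition \ref{def:k-ncontinuity} says exactly that $f(\mathbf{x})$ and $f(\mathbf{c})$ have the same coordinates in all positions $<k$.

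Next I would unwind what $(k,0)$-continuity then buys, remembering that an admissible $\mathbf{\einf_{2_0}}\in\Delta^{0}$ is just a genuine positive real $r$. Fixing $\mathbf{\einf_{1_k}}=\mathbf{\epsilon}^{k}$, at each point $\mathbf{c}$ of the domain there is a real $r>0$ so that on the ball $B_{\mathbf{c}}(r)$ (with the metric $\dis$) the function $\mathbf{x}\mapsto(\text{coordinates of }f(\mathbf{x})\text{ of index }<k)$ is identically equal to its value at $\mathbf{c}$; that is, this truncation of $f$ is \emph{locally constant}. Two reductions then finish the job. First, because a real ball $B_{\mathbf{c}}(r)$ contains the entire infinitesimal monad of $\mathbf{c}$, the above applied for \emph{all} $k$ forces $f$ to be constant on every monad, so $f$ factors through a function $g$ on the real interval obtained by reading off standard parts (this is also where the domain must be assumed bounded — see below). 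Second, on that honest real interval, $g$ restricted to coordinates of index $<k$ is locally constant in the ordinary topology, hence constant by connectedness of an interval of $\R$. Since $k$ was arbitrary and an element of $\Rzl$ is determined by all of its coordinates, $g$ — and therefore $f$ — is constant.

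The one genuinely delicate point, and the step I expect to be the obstacle, is the passage ``locally constant $\Rightarrow$ constant''. One cannot run it inside $\Rzl$ with the $\einf$-topology, since that space is badly disconnected; the argument must first collapse monads and descend to a real interval, and this only works if the domain of $f$ does not reach out to infinite elements — which is the ambient assumption here (the domain is an interval, as in Theorem \ref{thm:edRZ<NotImpliesEVPRZ<}). Modulo that reduction, the remaining content is a routine unwinding of Definition \ref{def:k-ncontinuity} together with elementary bookkeeping on nested balls. It is worth noting in the write-up that only $(k,0)$-continuity for all $k$ is actually used, so the hypothesis is much stronger than necessary; equivalently, the contrapositive says that a non-constant function must already fail $(k,0)$-continuity at some level $k$.
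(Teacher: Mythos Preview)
Your approach is correct and genuinely different from the paper's. The paper argues the contrapositive in one line: pick $a,b$ with $f(a)\neq f(b)$, let $m$ and $l$ be the leading indices of $|f(a)-f(b)|$ and $|a-b|$, and observe that $(m,l-1)$-continuity fails at $a$, since every $\einf_{2_{l-1}}\in\Delta^{l-1}$ dominates $|a-b|$ while a small enough $\einf_{1_m}\in\Delta^m$ is exceeded by $|f(a)-f(b)|$. Your route is direct: use only $(k,0)$-continuity, show each coordinate truncation of $f$ is locally constant in the real-ball topology, collapse monads to descend to a real interval, and finish by connectedness.

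The trade-off is instructive. The paper's argument is shorter and uses no topology, but as written it silently needs $l\geq 1$ so that $n=l-1$ lies in $\mathbb{N}\cup\{0\}$; in other words, it presumes one can find witnesses $a,b$ that are infinitesimally close. Your connectedness argument handles precisely the complementary case (when $f$ is constant on each monad but varies across monads) and, more importantly, surfaces the domain hypothesis you flag: on an interval reaching infinite elements the theorem is actually false (take $f=\mathbf{0}$ on finite points and $f=\mathbf{1}$ on infinite points --- this is $(k,n)$-continuous for every $k,n\geq 0$ because every $\einf_{2_n}\in\Delta^n$ is finite, so no $\Delta^n$-ball crosses the finite/infinite divide). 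Your closing observation that only $(k,0)$-continuity for all $k$ is needed is a genuine sharpening not visible from the paper's argument.
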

\begin{proof}
	Here we want to prove its contrapositive, in other words, if $f$ is not constant, then there exist $(k,n)$ such that $f$ is not $(k,n)$-continuous. Because $f$ is not constant, there will be $a,b$ in the domain such that $f(a)\neq f(b)$ and suppose that $|f(a)-f(b)|\in\Delta^m$ such that $|a-b|\in\Delta^l$. By this construction, $f$ will not be $(m,l-1)$-continuous and so we can take $k=m$ and $n=l-1$.
\end{proof}

The theorem below is a generalisation of Theorem \ref{thm:constantandkncont}.

\begin{theorem}
	If there exists $m$ for all $k$ such that a function $f$ is $(k,m)$-continuous, then $f$ will be constant in $\Delta^m$-neighbourhood.
\end{theorem}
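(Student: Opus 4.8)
Proof proposal.

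The plan is to argue by contraposition, following the template of the proof of Theorem~\ref{thm:constantandkncont}. Fix once and for all the level $m$ supplied by the hypothesis, so that $f$ is $(k,m)$-continuous at every point for every $k\in\mathbb{N}\cup\{0\}$. Suppose, toward a contradiction, that $f$ is not constant in the $\Delta^m$-neighbourhood of some point $\mathbf{c}$; concretely, this provides a point $\mathbf{x}$ lying inside the ball $(\mathbf{c}-\mathbf{\einf},\mathbf{c}+\mathbf{\einf})$ for \emph{every} positive $\mathbf{\einf}\in\Delta^m$ --- equivalently, $|\mathbf{x}-\mathbf{c}|$ is an infinitesimal of order strictly larger than $\mathbf{\epsilon}^m$, i.e. $|\mathbf{x}-\mathbf{c}|\in\Delta^{\underdownarrow{m+1}}$ --- with $f(\mathbf{x})\neq f(\mathbf{c})$.

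Since $|f(\mathbf{x})-f(\mathbf{c})|$ is a strictly positive element of $\Rzl$, it has a well-defined leading level $k_0\in\mathbb{N}\cup\{0\}$; write its leading term as $a_{k_0}\mathbf{\epsilon}^{k_0}$ with $a_{k_0}\in\R$ and $a_{k_0}>0$. Set $\mathbf{\einf_{1_{k_0}}}:=\tfrac{a_{k_0}}{2}\mathbf{\epsilon}^{k_0}$, which lies in $\Delta^{k_0}$ and is positive. Comparing lexicographically at coordinate $k_0$ gives
\[
\mathbf{0}<\mathbf{\einf_{1_{k_0}}}<|f(\mathbf{x})-f(\mathbf{c})|,
\]
and this comparison is decided already at coordinate $k_0$, hence holds irrespective of the higher-order coordinates of $|f(\mathbf{x})-f(\mathbf{c})|$.

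Now I claim that $f$ fails to be $(k_0,m)$-continuous at $\mathbf{c}$, which contradicts the hypothesis applied with $k=k_0$ and so finishes the proof. Take $\mathbf{\einf_{1_{k_0}}}$ above as the challenge. For \emph{any} positive $\mathbf{\einf_{2_m}}\in\Delta^m$ we have $|\mathbf{x}-\mathbf{c}|<\mathbf{\einf_{2_m}}$, because $|\mathbf{x}-\mathbf{c}|$ is of strictly higher order than $\mathbf{\einf_{2_m}}$; yet $|f(\mathbf{x})-f(\mathbf{c})|\not<\mathbf{\einf_{1_{k_0}}}$ by the display above. Thus no radius of level $m$ witnesses $(k_0,m)$-continuity at $\mathbf{c}$ for this challenge, i.e. $\neg\,(k_0,m)$-continuity. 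Since $\mathbf{c}$ was arbitrary, $f$ is constant in every $\Delta^m$-neighbourhood, as required.

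The step I expect to require the most care is pinning down the precise reading of ``constant in the $\Delta^m$-neighbourhood'' and matching it to the lexicographic order: one must verify that a point whose distance from $\mathbf{c}$ is of order strictly smaller than $\mathbf{\epsilon}^m$ really does sit inside \emph{every} $\Delta^m$-ball about $\mathbf{c}$, so that the single witness $\mathbf{x}$ simultaneously defeats all candidate radii $\mathbf{\einf_{2_m}}$, and that halving the leading coefficient produces a number strictly dominated by $|f(\mathbf{x})-f(\mathbf{c})|$ whatever its tail may be. The boundary case $k_0=0$, where $|f(\mathbf{x})-f(\mathbf{c})|$ is appreciable and $\mathbf{\einf_{1_0}}\in\Delta^0=\R$, should be recorded explicitly but presents no difficulty; it is exactly the situation already handled in Theorem~\ref{thm:constantandkncont}.
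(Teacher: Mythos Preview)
The paper does not actually supply a proof of this theorem; it is stated bare, immediately after Theorem~\ref{thm:constantandkncont}, with only the remark that it is ``a generalisation'' of that result. Your contrapositive argument follows precisely the template of the paper's proof of Theorem~\ref{thm:constantandkncont} (pick two points where $f$ differs, read off the leading level $k_0$ of $|f(\mathbf{x})-f(\mathbf{c})|$, and exhibit the failure of $(k_0,m)$-continuity), so your approach is exactly the intended one and the argument is sound.

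Two small remarks. First, when you write $|\mathbf{x}-\mathbf{c}|\in\Delta^{\underdownarrow{m+1}}$, note that by the paper's Definition~\ref{def:delta} the set $\Delta^{\underdownarrow{m+1}}$ contains only pure monomials $a_n\mathbf{\epsilon}^n$ with $n\geq m+1$, not arbitrary elements whose leading term sits at level $\geq m+1$. What you actually need and use is the latter condition, so it would be cleaner to say ``the leading level of $|\mathbf{x}-\mathbf{c}|$ is at least $m+1$'' rather than asserting membership in $\Delta^{\underdownarrow{m+1}}$. Second, your explicit discussion of how to read ``constant in the $\Delta^m$-neighbourhood'' is well placed: the paper never defines this phrase, and your reading (constant on the intersection of all $\Delta^m$-balls about each point) is exactly the one forced by the contrapositive method and by the $n=l-1$ bookkeeping in the paper's own proof of Theorem~\ref{thm:constantandkncont}.
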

%\begin{proof}
%	content...
%\end{proof}

%\begin{figure}
%	\centering
%	\begin{minipage}{0.4\textwidth} % choose width suitably
%		\includegraphics[width=\linewidth]{figures/RelationEDevpIVP}
%		{\footnotesize Here are some notes that go with the graph. Here are some 
%			notes that go with the graph. Here are some notes that go with the graph. 
%			Here are some notes that go with the graph.\par}
%	\end{minipage}
%	\caption{Historical shock decomposition, 2007Q1--2012Q4}
%\end{figure}

The next interesting question is: what is the relation between, for example, $\mathit{(0,1)}$-continuity and $\mathit{(0,2)}$-continuity? In general, what is the relation between $(k,n)$-continuity and $(k,(n+1))$-continuity? And also between $(k,n)$-continuity and $((k+1),n)$-continuity? See these two theorems below.

\begin{theorem}
	\label{thm:knandkn+1}
	For any function $f$, if $f$ is $(k,n)$-continuous, then $f$ is also $(k,(n+1))$-continuous.
\end{theorem}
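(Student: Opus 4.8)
The plan is to deduce $(k,n{+}1)$-continuity from $(k,n)$-continuity by simply \emph{shrinking the $\delta$-neighbourhood}, exploiting the interval nesting recorded in Remark~\ref{remark:kncontinuityintervalsubset}. The essential observation is that passing from level $\Delta^n$ to level $\Delta^{n+1}$ only makes the admissible radii $\einf_2$ smaller, while the tolerance $\einf_1$ stays in the same level $\Delta^k$ in both notions; hence the defining implication ``$|\mathbf{x}\widehat{-}\mathbf{c}|<\einf_2 \Rightarrow |\mathbf{f(x)}\widehat{-}\mathbf{f(c)}|<\einf_{1_k}$'' becomes only easier to satisfy, and nothing is lost on the $\einf_1$-side.

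Concretely, I would fix a point $\mathbf{c}$ at which $f$ is $(k,n)$-continuous, take an arbitrary $\einf_{1_k}\in\Delta^k$ with $\einf_{1_k}>\mathbf{0}$, and invoke $(k,n)$-continuity to obtain some $\einf_{2_n}\in\Delta^n$ with $\einf_{2_n}>\mathbf{0}$ such that $|\mathbf{x}\widehat{-}\mathbf{c}|<\einf_{2_n}$ implies $|\mathbf{f(x)}\widehat{-}\mathbf{f(c)}|<\einf_{1_k}$. As the $(k,n{+}1)$ witness I would then take a \emph{fixed} positive element $\einf_{2_{n+1}}\in\Delta^{n+1}$ (for instance $\epsilon^{n+1}$). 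Remark~\ref{remark:kncontinuityintervalsubset}, applied with $p=n$, $r=\mathbf{c}$, $d=\einf_{2_n}$ and $e=\einf_{2_{n+1}}$, gives $(\mathbf{c}-\einf_{2_{n+1}},\mathbf{c}+\einf_{2_{n+1}})\subseteq(\mathbf{c}-\einf_{2_n},\mathbf{c}+\einf_{2_n})$; crucially this inclusion holds for \emph{every} choice of $\einf_{2_n}\in\Delta^n$, so it is harmless that $\einf_{2_n}$ depended on $\einf_{1_k}$. Consequently $|\mathbf{x}\widehat{-}\mathbf{c}|<\einf_{2_{n+1}}$ forces $|\mathbf{x}\widehat{-}\mathbf{c}|<\einf_{2_n}$, hence $|\mathbf{f(x)}\widehat{-}\mathbf{f(c)}|<\einf_{1_k}$. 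Since $\einf_{1_k}$ was arbitrary, $f$ is $(k,n{+}1)$-continuous at $\mathbf{c}$, and since $\mathbf{c}$ was arbitrary, $f$ is $(k,n{+}1)$-continuous.

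The only point needing a line of justification --- and the closest this argument comes to an obstacle --- is that $\Delta^{n+1}$ actually contains a positive element, so that the witness $\einf_{2_{n+1}}$ exists at all; this is immediate from \Cref{def:delta} together with Axiom~\ref{ax:InfinitesimalPropertyOfRHat}, since $\epsilon^{n+1}$ (with real coefficient $1$) qualifies. Everything else is the interval inclusion of Remark~\ref{remark:kncontinuityintervalsubset}, so no case analysis on the order is required. In the paraconsistent setting this is worth noting: the argument only ever \emph{strengthens} the hypothesis of an already-granted implication, a monotone step that invokes neither totality of $\widehat{\leq}$ nor any potentially contradictory fact (such as the nonexistence of a supremum of the infinitesimals), so the conclusion is safe even though $\Rzl$ itself is inconsistent.
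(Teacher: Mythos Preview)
Your argument is correct and follows essentially the same route as the paper: both proofs fix $\mathbf{c}$ and $\einf_{1_k}$, extract $\einf_{2_n}$ from $(k,n)$-continuity, and then invoke Remark~\ref{remark:kncontinuityintervalsubset} to pass to a smaller radius in $\Delta^{n+1}$. The only difference is cosmetic: the paper manufactures its witness as $\einf_{2_{(n+1)}}=\einf_{2_n}\widehat{\times}\mathbf{\epsilon}$, whereas you take the universal constant $\epsilon^{n+1}$; since Remark~\ref{remark:kncontinuityintervalsubset} already guarantees the inclusion for \emph{any} pair $d\in\Delta^n$, $e\in\Delta^{n+1}$, either choice works (yours has the pleasant side effect that the witness does not depend on $\einf_{1_k}$). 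One small slip in your closing paragraph: $\Rzl$ is the \emph{consistent} model for the first chunk, not an inconsistent set --- it is $\mathbb{\widehat{R}}$ that carries the contradiction --- so the paraconsistency caveat is unnecessary here.
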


\begin{proof}
	Suppose that a function $f$ is $(k,n)$-continuous at point $\mathbf{c}$. This would mean that $\forall\einf_{1_k}$, $\exists\einf_{2_n}$ such that if $|\mathbf{x}\widehat{-}\mathbf{c}|<\einf_{2_n}$, then $|f(\mathbf{x})\widehat{-}f(\mathbf{c})|<\einf_{1_k}$. By using the same $\einf_{1_k}$ and from Remark \ref{remark:kncontinuityintervalsubset}, we can surely find $\einf_{2_{(n+1)}}=\einf_{2_n}\widehat{\times}\mathbf{\epsilon}$ such that for all $x\in(\mathbf{c}\widehat{-}\einf_{2_{(n+1)}},\mathbf{c}\widehat{+}\einf_{2_{(n+1)}})$, $f(x)\in(f(\mathbf{c})\widehat{-}\einf_{1_k},f(\mathbf{c})\widehat{+}\einf_{1_k})$.
\end{proof}

\begin{example}
	By Theorem \ref{thm:knandkn+1}, the function $f(x)$ in Example \ref{ex:xandx+1fork-ncontinuity} is also $\mathit{(0,2)}$-continuous, and also $\mathit{(0,3)}$-continuous, and so on.
\end{example}

\begin{theorem}
	For any function $f$, if $f$ is $((k+1),n)$-continuous, then $f$ is also $(k,n)$-continuous.
\end{theorem}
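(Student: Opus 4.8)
The plan is to reuse the mechanism from the proof of Theorem~\ref{thm:knandkn+1}, trading on the observation that a $\Delta^{k+1}$-sized output tolerance is strictly smaller than a $\Delta^k$-sized one. First I would fix an arbitrary point $\mathbf{c}$ in the domain of $f$; by the definition of $(k,n)$-continuity it suffices to establish $(k,n)$-continuity at $\mathbf{c}$.

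So let $\einf_{1_k}\in\Delta^k$ with $\einf_{1_k}>0$ be given. Next I would pass to the deeper tolerance $\einf_{1_{k+1}}:=\einf_{1_k}\widehat{\times}\mathbf{\epsilon}$ and check that it is a legitimate input for the hypothesis: writing $\einf_{1_k}=a_k\mathbf{\epsilon}^k$ with $a_k\in\R$, positivity of $\einf_{1_k}$ forces $a_k\neq 0$ (indeed $a_k>0$), so $\einf_{1_{k+1}}=a_k\mathbf{\epsilon}^{k+1}$ has nonzero leading coefficient and hence lies in $\Delta^{k+1}$ by Definition~\ref{def:delta}, and it is positive as a product of positives. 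Then I would invoke $((k+1),n)$-continuity of $f$ at $\mathbf{c}$ for this $\einf_{1_{k+1}}$: it yields some $\einf_{2_n}\in\Delta^n$ with $\einf_{2_n}>0$ such that $|\mathbf{x}\widehat{-}\mathbf{c}|<\einf_{2_n}$ implies $|f(\mathbf{x})\widehat{-}f(\mathbf{c})|<\einf_{1_{k+1}}$.

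Finally I would argue that this very $\einf_{2_n}$ already witnesses $(k,n)$-continuity for the original $\einf_{1_k}$. By Remark~\ref{remark:kncontinuityintervalsubset} applied with $p=k$, $d=\einf_{1_k}$, $e=\einf_{1_{k+1}}$, we have $(f(\mathbf{c})\widehat{-}\einf_{1_{k+1}},f(\mathbf{c})\widehat{+}\einf_{1_{k+1}})\subseteq(f(\mathbf{c})\widehat{-}\einf_{1_k},f(\mathbf{c})\widehat{+}\einf_{1_k})$, i.e.\ $\einf_{1_{k+1}}<\einf_{1_k}$; hence $|f(\mathbf{x})\widehat{-}f(\mathbf{c})|<\einf_{1_{k+1}}$ gives $|f(\mathbf{x})\widehat{-}f(\mathbf{c})|<\einf_{1_k}$ whenever $|\mathbf{x}\widehat{-}\mathbf{c}|<\einf_{2_n}$. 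As $\einf_{1_k}$ was arbitrary this proves $(k,n)$-continuity at $\mathbf{c}$, and therefore everywhere. I do not expect any real difficulty here: the one spot deserving care is the verification that $\einf_{1_k}\widehat{\times}\mathbf{\epsilon}$ is a genuine strictly positive element of $\Delta^{k+1}$, since without that the $((k+1),n)$-continuity hypothesis cannot be applied to it; the rest is immediate from Remark~\ref{remark:kncontinuityintervalsubset}.
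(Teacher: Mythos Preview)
Your argument is correct and follows essentially the same route as the paper: given $\einf_{1_k}$, pass to a smaller tolerance in $\Delta^{k+1}$, invoke the $((k+1),n)$-continuity hypothesis to obtain $\einf_{2_n}$, and then use Remark~\ref{remark:kncontinuityintervalsubset} to enlarge the output interval back to the $\Delta^k$ scale. The only difference is cosmetic: you explicitly construct $\einf_{1_{k+1}}=\einf_{1_k}\widehat{\times}\mathbf{\epsilon}$ (mirroring the construction in Theorem~\ref{thm:knandkn+1}) and verify it lies in $\Delta^{k+1}$, whereas the paper simply takes some $\einf_{1_{(k+1)}}\in\Delta^{k+1}$ without naming it and appeals directly to the remark.
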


\begin{proof}
	Suppose that a function $f$ is $((k+1),n)$-continuous at point $\mathbf{c}$ and the set $\Delta^m$ defined as in Theorem \ref{thm:knandkn+1}. The fact that $f$ is $((k+1),n)$-continuous means that $\forall\einf_{1_{(k+1)}}$, $\exists\einf_{2_n}$ such that if $|\mathbf{x}\widehat{-}\mathbf{c}|<\einf_{2_n}$, then $|f(\mathbf{x})\widehat{-}f(\mathbf{c})|<\einf_{1_{(k+1)}}$ is hold. Here we want to prove that $\forall\einf_{1_{k}}$, $\exists\einf_{2_n}$ such that if $|\mathbf{x}\widehat{-}\mathbf{c}|<\einf_{2_n}$, then $|f(\mathbf{x})\widehat{-}f(\mathbf{c})|<\einf_{1_k}$. This actually follows directly from Remark \ref{remark:kncontinuityintervalsubset} as $(f(\mathbf{c})\widehat{-}\einf_{1_{(k+1)}},f(\mathbf{c})\widehat{+}\einf_{1_{(k+1)}})\subseteq(f(\mathbf{c})\widehat{-}\einf_{1_{k}},f(\mathbf{c})\widehat{+}\einf_{1_k})$.
\end{proof}

Now suppose that $f$ and $g$ are two $(k,n)$-continuous functions in $\Rzl$. We will examine how the arithmetic of those two continuous functions works. It is clear that $(k,n)$-continuity is closed under addition and subtraction, i.e. $f+g$ and $f-g$ are both $(k,n)$-continuous. The composition and multiplication of two continuous functions are particularly interesting as can be seen in \Cref{thm:compositionContinuous} and \Cref{thm:multContinuous}, respectively.

\begin{theorem}
	\label{thm:compositionContinuous}
	If $f$ is a $(k,n)$-continuous function and $g$ is an $(n,q)$-continuous function, then $f\circ g$ will be $(k,q)$-continuous.
\end{theorem}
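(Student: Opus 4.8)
The plan is to mimic the classical $\varepsilon$–$\delta$ proof that a composition of continuous functions is continuous, being careful that the ``levels'' (the $\Delta^p$-indices) line up. The crucial observation is that the \emph{input} tolerance handed back by the $(k,n)$-continuity of $f$ always lives in $\Delta^n$, which is exactly the set from which the \emph{output} tolerance in the $(n,q)$-continuity of $g$ is drawn. So the ``$\einf_{2_n}$'' coming out of $f$ can legitimately be fed in as the ``$\einf_{1_n}$'' going into $g$, and this matching of indices is precisely what forces the composite index pair to be $(k,q)$.

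Concretely, I would fix an arbitrary point $\mathbf{c}$ in the domain of $g$ with $g(\mathbf{c})$ in the domain of $f$, and fix an arbitrary $\einf_{1_k}\in\Delta^k$ with $\einf_{1_k}>0$. First, invoking the $(k,n)$-continuity of $f$ at the point $g(\mathbf{c})$, I obtain some $\einf_{2_n}\in\Delta^n$ with $\einf_{2_n}>0$ such that whenever $|\mathbf{y}\widehat{-}g(\mathbf{c})|<\einf_{2_n}$ we have $|f(\mathbf{y})\widehat{-}f(g(\mathbf{c}))|<\einf_{1_k}$. Since $\einf_{2_n}$ is a positive element of $\Delta^n$, it is an admissible choice of ``$\einf_{1_n}$'' in the $(n,q)$-continuity of $g$ at $\mathbf{c}$; applying that gives some $\einf_{2_q}\in\Delta^q$ with $\einf_{2_q}>0$ such that $|\mathbf{x}\widehat{-}\mathbf{c}|<\einf_{2_q}$ implies $|g(\mathbf{x})\widehat{-}g(\mathbf{c})|<\einf_{2_n}$.

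Chaining the two implications finishes it: if $|\mathbf{x}\widehat{-}\mathbf{c}|<\einf_{2_q}$, then $|g(\mathbf{x})\widehat{-}g(\mathbf{c})|<\einf_{2_n}$, and then, taking $\mathbf{y}=g(\mathbf{x})$ in the first implication, $|f(g(\mathbf{x}))\widehat{-}f(g(\mathbf{c}))|<\einf_{1_k}$, i.e. $|(f\circ g)(\mathbf{x})\widehat{-}(f\circ g)(\mathbf{c})|<\einf_{1_k}$. As $\einf_{1_k}\in\Delta^k$ was arbitrary and we produced $\einf_{2_q}\in\Delta^q$, this is exactly $(k,q)$-continuity of $f\circ g$ at $\mathbf{c}$; since $\mathbf{c}$ was an arbitrary point of the domain, $f\circ g$ is $(k,q)$-continuous.

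I do not anticipate a real obstacle; the only thing to be scrupulous about is the index bookkeeping — specifically, that the tolerance $\einf_{2_n}$ returned by $f$'s continuity is positive and lies in $\Delta^n$ (so that it is a legitimate target tolerance when one applies $g$'s continuity), and that the domains compose well enough for ``$f$ is $(k,n)$-continuous at $g(\mathbf{c})$'' to be meaningful at the relevant points. Apart from that, the argument is the textbook one, with the pair $(k,n)\circ(n,q)=(k,q)$ playing the role of the usual $\varepsilon$-then-$\delta$ relay.
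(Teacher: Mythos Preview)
Your proof is correct and follows essentially the same approach as the paper's: apply the $(k,n)$-continuity of $f$ at $g(\mathbf{c})$ to obtain an $\einf_{2_n}\in\Delta^n$, feed that as the target tolerance into the $(n,q)$-continuity of $g$ at $\mathbf{c}$ to obtain an $\einf_{2_q}\in\Delta^q$, and chain the two implications. If anything, you are a bit more explicit than the paper about the index bookkeeping (that the $\einf_{2_n}$ returned by $f$ is a legitimate $\einf_{1_n}$ for $g$), which is exactly the one point worth being careful about.
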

\begin{proof}
	Since $f$ is $(k,n)$-continuous at $g(c)$, our definition of continuity tells us that for all $\einf_{1_k}>0$, there exists $\einf_{2_n}$ such that
	\begin{center}
		if $\abs{g(x)-g(a)}<\einf_{2_n}$, then $\abs{f(g(x))-f(g(a))}<\einf_{1_k}$.
	\end{center}
	Also since $g$ is $(n,q)$-continuous at $c$, there exists $\einf_{2_q}$ such that
	\begin{center}
	if $\abs{x-a}<\einf_{2_q}$, then $\abs{g(x)-g(a)}<\einf_{2_n}$.
	\end{center}
	I have taken $\einf_{1_n}=\einf_{2_n}$ here. Now this tells us that for all $\einf_{1_k}>0$, there exists $\einf_{2_q}$ (and an $\einf_{2_n}$) such that
	\begin{center}
		if $\abs{x-a}<\einf_{2_q}$, then $\abs{g(x)-g(a)}<\einf_{2_n}$ which implies that $\abs{f(g(x))-f(g(c))}<\einf_{1_k}$,
	\end{center}
	which is what we wanted to show.
\end{proof}

\begin{theorem}
	\label{thm:multContinuous}
	Suppose that $f,g$ are finite-valued functions. If $f$ is a $(k,n)$-continuous function and $g$ is an $(l,o)$-continuous function, then the function $H=f\cdot g$ will be $(\max\{k,l\},\min\{n,o\})$-continuous.
\end{theorem}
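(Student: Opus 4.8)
The plan is to transcribe the classical proof that a product of continuous functions is continuous into the graded $(k,n)$-continuity setting, the only real novelty being to keep track of which layer $\Delta^p$ each quantity inhabits. Since $f\cdot g=g\cdot f$, assume without loss of generality that $k\le l$, so that $\max\{k,l\}=l$ (the case $k>l$ follows by swapping $f$ and $g$). Fix a point $\mathbf{c}$ in the common domain, put $H=f\cdot g$, and begin from
\[
H(\mathbf{x})-H(\mathbf{c}) = f(\mathbf{x})\bigl(g(\mathbf{x})-g(\mathbf{c})\bigr) + g(\mathbf{c})\bigl(f(\mathbf{x})-f(\mathbf{c})\bigr),
\]
whence, by the triangle inequality and the order axioms of $\Rzl$,
\[
\abs{H(\mathbf{x})-H(\mathbf{c})} \le \abs{f(\mathbf{x})}\cdot\abs{g(\mathbf{x})-g(\mathbf{c})} + \abs{g(\mathbf{c})}\cdot\abs{f(\mathbf{x})-f(\mathbf{c})}.
\]
Since $g$ is finite-valued, $\abs{g(\mathbf{c})}\le G$ for some ordinary positive real $G$; since $f$ is finite-valued and continuous at $\mathbf{c}$, restricting to a neighbourhood of $\mathbf{c}$ on which $\abs{f(\mathbf{x})-f(\mathbf{c})}<1$ gives $\abs{f(\mathbf{x})}\le F$ for a positive real $F$ there. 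Forcing $F$ and $G$ to be plain reals matters because multiplying an element of $\Delta^p$ by a nonzero real keeps it in $\Delta^p$, so dividing a target tolerance by $F$ or $G$ does not change its layer.

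To prove that $H$ is $(l,\min\{n,o\})$-continuous at $\mathbf{c}$, take an arbitrary $\einf_{1_l}>0$ and split it as $\tfrac12\einf_{1_l}+\tfrac12\einf_{1_l}$. For the $g$-term, feed the tolerance $\tfrac{1}{2F}\einf_{1_l}$ (still of layer $l$) into the $(l,o)$-continuity of $g$ to obtain a radius forcing $\abs{g(\mathbf{x})-g(\mathbf{c})}<\tfrac{1}{2F}\einf_{1_l}$. For the $f$-term, feed $\tfrac{1}{2G}\einf_{1_l}$ into the continuity of $f$ --- sliding $f$'s indices by means of \Cref{thm:knandkn+1} and the theorem reducing $((k+1),n)$-continuity to $(k,n)$-continuity --- to obtain a radius forcing $\abs{f(\mathbf{x})-f(\mathbf{c})}<\tfrac{1}{2G}\einf_{1_l}$. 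Let $\einf_2$ be the smaller of these two radii, shrunk further if necessary so that the $\einf_2$-ball around $\mathbf{c}$ lies inside the neighbourhood fixed above; by \Cref{remark:kncontinuityintervalsubset} that ball lies inside both earlier balls, so both estimates hold on it and the displayed inequality gives $\abs{H(\mathbf{x})-H(\mathbf{c})}<\einf_{1_l}$. (Closure of $(k,n)$-continuity under $\pm$, already noted in the text, is an easier instance of the same bookkeeping, so $f\pm g$ requires no separate argument.)

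The main obstacle is exactly the layer bookkeeping just sketched, and it has two parts. First, on the $f$-side we are asking $f$ --- only assumed $(k,n)$-continuous --- to meet an output tolerance of layer $l\ge k$, which when $l>k$ is a strictly stronger requirement than $(k,n)$-continuity by itself; the argument must show that finite-valuedness of $f$ (hence a real bound near $\mathbf{c}$), together with the nesting $\Delta^{\underdownarrow{p+1}}\subseteq\Delta^{\underdownarrow{p}}$, is enough to recover it. This is the delicate step, and the only place where the hypothesis that $f$ and $g$ are finite-valued is genuinely used. Second, one must verify that the minimum of the $f$-radius and the $g$-radius lands in exactly the layer $\Delta^{\min\{n,o\}}$ named in the statement and not in a finer one: each radius a priori lives in the layer its own continuity delivers, and pinning the minimum down to $\Delta^{\min\{n,o\}}$ --- again via \Cref{remark:kncontinuityintervalsubset}, since the smaller of two nested intervals is still an interval of a definite layer --- is what fixes the second index. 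Once these two layer computations are carried out, the estimate above finishes the proof, and the symmetric treatment of $H=g\cdot f$ confirms that the $\max$ and the $\min$ are taken in the correct slots.
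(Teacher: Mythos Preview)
Your decomposition and overall strategy match the paper's, but there is a genuine gap in your treatment of the $f$-term. You feed $f$ the tolerance $\tfrac{1}{2G}\einf_{1_l}\in\Delta^l$ and propose to ``slide $f$'s indices by means of \Cref{thm:knandkn+1} and the theorem reducing $((k+1),n)$-continuity to $(k,n)$-continuity.'' But those two results move indices only in the \emph{weakening} direction: the first lets you increase the second index ($n\mapsto n+1$), the second lets you decrease the first index ($k+1\mapsto k$). Neither lets you pass from $(k,n)$-continuity to $(l,n)$-continuity with $l>k$, which is exactly what meeting a $\Delta^l$-target would require. Finite-valuedness of $f$ does not rescue this either: it bounds $|f(\mathbf{x})|$ by a real, but says nothing about the oscillation $|f(\mathbf{x})-f(\mathbf{c})|$ at scales finer than $\Delta^k$. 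You correctly flag this as ``the delicate step,'' but it is not actually carried out, and the sliding mechanism you name runs the wrong way.

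The paper's proof handles this point differently. Instead of forcing $f$ to meet a layer-$l$ tolerance, it applies $(k,n)$-continuity of $f$ with a target $\einf_{1_k}\in\Delta^k$ and $(l,o)$-continuity of $g$ with a target $\einf_{1_l}\in\Delta^l$, arriving at the bound $|f(\mathbf{x})|\,\einf_{1_l}+|g(\mathbf{c})|\,\einf_{1_k}$. Finite-valuedness is then invoked only to say that the real factors $|f(\mathbf{x})|$ and $|g(\mathbf{c})|$ keep each summand in its own layer, after which the paper concludes that the sum lies in $\Delta^{\max\{k,l\}}$. On the radius side the paper also does not take the minimum of two separately produced radii (which, as you rightly worry, would land in the \emph{finer} layer $\Delta^{\max\{n,o\}}$ rather than $\Delta^{\min\{n,o\}}$); it fixes a single $\einf_2\in\Delta^{\min\{n,o\}}$ at the outset and works with that radius throughout. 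So even where your outline is right in spirit, both bookkeeping devices you propose differ from the ones the paper actually uses.
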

\begin{proof}
	Let $f,g$ be given such that $f$ is $(k,n)$-continuous and $g$ is $(l,o)$-continuous. Now let $H$ be defined by $H(x)=f(x)g(x)$ and so we want to show that $H$ is $(\max\{k,l\},\min\{n,o\})$-continuous, that is, for all $c\in\Rzl$, for every $\einf_{1_{\max\{k,l\}}}>0$, there exists $\einf_{2_{\min\{n,o\}}}>0$ such that for all $x\in\Rzl$ with $\abs{x-c}<\einf_{2_{\min\{n,o\}}}$, $\abs{H(x)-H(c)}<\einf_{1_{\max\{k,l\}}}$ holds.\\
	Now let  $c$ and $\einf_{1_{\max\{k,l\}}}$ be given and we choose $\einf_2$ such that $\einf_2\in\Delta^{\min\{n,o\}}$, i.e. $\einf_2=\einf_{2_{\min\{n,o\}}}$. Then for all $x\in\Rzl$ with $\abs{x-c}<\einf_{2_{\min\{n,o\}}}$,
	\begin{align}
		\abs{H(x)-H(c)}		&= \abs{f(x)g(x)-f(c)g(c)}\nonumber\\
									 &= \abs{f(x)g(x)-f(x)g(a)+f(x)g(a)-f(c)g(c)}\nonumber\\
									 &\leq\abs{f(x)g(x)-f(c)g(c)} + \abs{f(x)g(a)-f(c)g(c)}\nonumber\\
									 &= \abs{f(x)(g(x)-g(a))} + \abs{g(a)(f(x)f(c))}\nonumber\\
									 &< \abs{f(x)}\einf_{1_{l}} + \abs{g(a)}\einf_{1_{k}} \label{key}
	\end{align}
	Note that because $f$ and $g$ are limited-valued function, then $\abs{f(x)}\einf_{1_{l}}$ and $\abs{g(a)}\einf_{1_{k}}$ are still in $\Delta^l$ and $\Delta^k$, respectively. This means that the right side of Inequality \ref{key} will be in $\Delta^{\max\{k,l\}}$ and so $H(x)-H(c)<\einf_{1_{\max\{k,l\}}}$ is hold.
\end{proof}

It is worth pointing out here that the definition of $(k,n)$-continuity is a much more fine-grained notion than the classical continuity. This is self-explanatory by the use of those two variables $k$ and $n$ which makes us able to take much more infinitesimals --- in other words, we will be able to examine a far greater depth --- than in the classical definition. Furthermore, there might be some possible connections to one of the quantum phenomenons in physics: `action at a distance'. This concept is typically characterized in terms of some cause producing a spatially separated effect in the absence of any medium by which the causal interaction is transmitted \cite{french2005action} and closely connected to the question of what the deepest level of physical reality is \cite[pg. 168]{musser2015spooky}. Note that research on this phenomenon is still being conducted up until now, as can be seen for example  \cite{popkin2018einstein}, \cite{shalm2015strong} and \cite{weston2018heralded}.

\subsection{Topological Continuity}

\begin{definition}
	A function $f$ from a topological space $(X,\uptau_1)$ to a topological space $(Y,\uptau_2)$ is a function $f:X\rightarrow Y$.
\end{definition}

\noindent From now on, we will abbreviate this function notation by $f:X\rightarrow\ Y$ or simply $f$ every time the topologies in $X$ and $Y$ need not be explicitly mentioned. Also, $f^{-1}$ denotes the inverse image of $f$ as usual.

\begin{definition}[St-continuous]
	\label{def:StTopContinuous}
	A function $f:X\rightarrow Y$ between topological spaces is \textit{standard topologically continuous}, denoted by St-continuous, if
	\begin{center}
		$f^{-1}(U)\subseteq X$ is St-open whenever $U\subseteq Y$ is St-open.
	\end{center}
\end{definition}

\begin{definition}[$\einf$-continuous]
	\label{def:EinfTopContinuous}
	A function $f:X\rightarrow\ Y$ between topological spaces is \textit{infinitesimally topologically continuous}, denoted by $\einf$-continuous, if
	\begin{center}
		$f^{-1}(U)\subseteq X$ is $\einf$-open whenever $U\subseteq Y$ is $\einf$-open.
	\end{center}
\end{definition}

\begin{theorem}
	\label{thm:Stcontiffedclass}
	Suppose that $X,Y\subseteq\Rzl$. Under the metric \textnormal{$\dis$}, a function $f:X\rightarrow Y$ is St-continuous if and only if $f$ satisfies $\textnormal{ED}_\textnormal{CLASS}$ definition (Definition \ref{def:ED_CLASS}).
\end{theorem}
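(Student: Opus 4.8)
The plan is to carry out the standard equivalence between topological continuity and the $\epsilon$-$\delta$ condition, while taking care of the one feature peculiar to $\Rzl$: by Definition~\ref{def:St-open}, St-openness is tested \emph{only} against the rational-radius balls $B_{\mathbf{x}}(1/n)$ (the \textit{rat-balls} of Table~\ref{table:ballsinRandRz<}), and these St-open sets do form a topology (closure under finite intersections and arbitrary unions is immediate from the definition), so I work with the induced St-topologies on $X$ and $Y$ and with the metric $\dis(\mathbf{x},\mathbf{y})=\abs{\mathbf{y}-\mathbf{x}}$ throughout.

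\emph{($\textnormal{ED}_\textnormal{CLASS}\Rightarrow$ St-continuous.)} Let $U\subseteq Y$ be St-open and pick $\mathbf{x}\in f^{-1}(U)$. Since $f(\mathbf{x})\in U$, there is $n$ with $B_{f(\mathbf{x})}(1/n)\cap Y\subseteq U$. Feeding this $n$ into $\textnormal{ED}_\textnormal{CLASS}$ for $f$ at $\mathbf{x}$ yields an $m$ with $\dis(f(\mathbf{z}),f(\mathbf{x}))<1/n$ whenever $\dis(\mathbf{z},\mathbf{x})<1/m$; hence $f\bigl(B_{\mathbf{x}}(1/m)\cap X\bigr)\subseteq B_{f(\mathbf{x})}(1/n)\cap Y\subseteq U$, so $B_{\mathbf{x}}(1/m)\cap X\subseteq f^{-1}(U)$. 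As $\mathbf{x}$ was arbitrary, $f^{-1}(U)$ is St-open, which is exactly what St-continuity demands.

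\emph{(St-continuous $\Rightarrow\textnormal{ED}_\textnormal{CLASS}$.)} Fix $\mathbf{a}\in X$ and $n\in\mathbb{N}$; I seek $m$ with $\dis(f(\mathbf{z}),f(\mathbf{a}))<1/n$ whenever $\dis(\mathbf{z},\mathbf{a})<1/m$. The obstacle is that one cannot simply hand the ball $V_0:=B_{f(\mathbf{a})}(1/n)$ to St-continuity, because a rat-ball need not itself be St-open: a point lying an \emph{infinitesimal} distance below the boundary radius admits no rational-radius ball around it still contained in $V_0$. The remedy is to replace $V_0$ by an honestly St-open neighbourhood of $f(\mathbf{a})$ contained in it; one such choice is
\[
	V \;:=\; \bigcup_{k\in\mathbb{N}}\bigl\{\,\mathbf{y}\in Y \;:\; \dis(\mathbf{y},f(\mathbf{a}))\le 1/n-1/k\,\bigr\}
\]
(equivalently, the St-interior of $V_0$). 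A short triangle-inequality computation shows $V$ is St-open --- if $\dis(\mathbf{y},f(\mathbf{a}))\le 1/n-1/k$ then $B_{\mathbf{y}}(1/(2k))\cap Y\subseteq V$ --- while $f(\mathbf{a})\in V$ (take $k=n$) and $V\subseteq V_0$. Then $f^{-1}(V)$ is St-open by hypothesis and contains $\mathbf{a}$, so some $m$ gives $B_{\mathbf{a}}(1/m)\cap X\subseteq f^{-1}(V)\subseteq f^{-1}(V_0)$; unwinding, $\dis(\mathbf{z},\mathbf{a})<1/m$ forces $f(\mathbf{z})\in V_0$, i.e.\ $\dis(f(\mathbf{z}),f(\mathbf{a}))<1/n$. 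Since $\mathbf{a}$ and $n$ were arbitrary, $f$ satisfies $\textnormal{ED}_\textnormal{CLASS}$.

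I expect the main obstacle to be precisely the point flagged in the second direction: the naive witness $B_{f(\mathbf{a})}(1/n)$ fails to be St-open in $\Rzl$, so one must first pass to a genuinely St-open neighbourhood of $f(\mathbf{a})$ (its St-interior $V$) before invoking St-continuity. Once that is isolated, everything else is the routine back-and-forth between Definitions~\ref{def:St-open} and~\ref{def:ED_CLASS}, using only the metric axioms for $\dis$ and the closure properties of the St-open sets noted at the outset.
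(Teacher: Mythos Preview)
Your proof is correct and, in fact, more careful than the paper's own argument. For the direction $\textnormal{ED}_\textnormal{CLASS}\Rightarrow$ St-continuous, both you and the paper give essentially the standard argument (yours is slightly more streamlined; the paper spells out a triangle-inequality computation with an auxiliary point $\mathbf{y_0}$). The genuine difference lies in the converse direction. The paper simply asserts that the ball $B_{f(\mathbf{x_0})}(1/n)$ is ``open in $Y$'' and then pulls it back via St-continuity. You correctly flag that in $\Rzl$ this ball is \emph{not} St-open in the sense of Definition~\ref{def:St-open}: a point $\mathbf{y}$ with $\dis(\mathbf{y},f(\mathbf{x_0}))=1/n-\epsilon$ for an infinitesimal $\epsilon>0$ admits no rat-ball neighbourhood contained in $B_{f(\mathbf{x_0})}(1/n)$, since $1/m>\epsilon$ for every $m\in\mathbb{N}$. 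Your remedy --- passing to the St-interior $V=\bigcup_{k}\{\mathbf{y}:\dis(\mathbf{y},f(\mathbf{a}))\le 1/n-1/k\}$ --- is exactly the right repair, and your verification that $V$ is St-open, contains $f(\mathbf{a})$, and sits inside the original ball is clean. So your argument actually closes a non-Archimedean gap that the paper's proof, which follows the classical metric-space template verbatim, leaves open.
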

\begin{proof}
	We need to prove the implication both ways.
	\begin{enumerate}
		\item We want to prove that if $f$ is St-continuous, then $f$ satisfies ED$_\textnormal{CLASS}$. Suppose that $f$ is St-continuous and let $\mathbf{x_o}\in X$ and $n\in\mathbb{N}>0$. Then, the ball
		\begin{align*}
			B_{f(\mathbf{x_0})}(\nicefrac{1}{n})=\{\mathbf{y}\in Y \mid\dis(\mathbf{y},f(\mathbf{x_0}))<\nicefrac{1}{n}\}
		\end{align*}
		is open in $Y$, and hence $f^{-1}(B_{f(\mathbf{x_0})})$ is open in $X$. Since $\mathbf{x_0}\in f^{-1}(B_{f(\mathbf{x_0})})$, there exists some balls of radius $\nicefrac{1}{m}$ for some $m\in\mathbb{N}$ such that
		\begin{align*}
			B_{\mathbf{x_0}}(\nicefrac{1}{m})\subseteq f^{-1}(B_{f(\mathbf{x_0})}).
		\end{align*}
		This is exactly what the ED$_\textnormal{CLASS}$ says.
		\item We want to prove that if $f$ satisfies ED$_\textnormal{CLASS}$, then $f$ is St-continuous. Suppose that $f$ satisfies ED$_\textnormal{CLASS}$ and let $U\subseteq Y$ is open. By Definition \ref{def:St-open}, for all $\mathbf{y}\in U$ there exists some $d_y=\nicefrac{1}{n_y}$ where $n_y\in\mathbb{N}$ such that
		\begin{align*}
			B_\mathbf{y}(d_y)\subseteq U
		\end{align*}
		and in fact,
		\begin{align}
			\label{key1}
			U=\bigcup_{y\in U}B_{d_y}(\mathbf{y}).
		\end{align}
		Now we claim that $f^{-1}(U)$ is open in $X$ and suppose that $\mathbf{x_0}\in f^{-1}(U)$. Then $f(\mathbf{x_0})\in U$ and so from Equation \ref{key1}, $f(\mathbf{x_0})\in B_{d_{y_0}}(\mathbf{y_0})$ for some $\mathbf{y_0}\in U$ and $d_{y_0}=\nicefrac{1}{n_{y_0}}$ for some $n_{y_0}\in\mathbb{N}$, i.e. $\dis(f(\mathbf{x_0}),\mathbf{y_0})<d_{y_0}$. Now define
		\begin{align}
			\label{key2}
			e=d_{y_0}-\dis(f(\mathbf{x_0}),\mathbf{y_0})>0.
		\end{align}
		By Definition \ref{def:ED_CLASS}, there exists some $m\in\mathbb{N}$ such that
		\begin{align}
			\label{key3}
			\textnormal{if } \mathbf{x}\in X \textnormal{ and } \dis(\mathbf{x},\mathbf{x_0})<\nicefrac{1}{m}, \textnormal{ then } \dis(f(\mathbf{x}),f(\mathbf{x_0}))<e.
		\end{align}
		Now we claim that
		\begin{align}
			\label{key4}
			B_{\mathbf{x_0}}(\nicefrac{1}{m})\subseteq f^{-1}(U),
		\end{align}
		which will actually show that $f^{-1}(U)$ is indeed open. To this end, let $x\in B_{\mathbf{x_0}}(\nicefrac{1}{m})$, i.e. $\dis(\mathbf{x},\mathbf{x_0})<\nicefrac{1}{m}$. Then from (\ref{key3}), we have $\dis(f(\mathbf{x}),f(\mathbf{x_0}))<e$. Then, the triangle inequality and (\ref{key2}) imply that
		\begin{align*}
			\dis(f(\mathbf{x}),\mathbf{y_0})\leq\dis(f(\mathbf{x}),f(\mathbf{x_0}))+\dis(f(\mathbf{x_0}),\mathbf{y_0})<e+\diss(f(\mathbf{x_0}),\mathbf{y_0})=d_{y_0}.
		\end{align*}
		This means that $f(x)\in B_{\mathbf{y_0}}(d_{y_0})\subseteq U$, so that $x\in f^{-1}(U)$. Therefore, (\ref{key4}) holds, as claimed.
	\end{enumerate}
	And so from those two points above, we have proved what we want.
\end{proof}

\begin{theorem}
	\label{thm:etopcontIffED}
	Suppose that $X,Y\subseteq\Rzl$. Under the metric \textnormal{$\dis$}, a function $f:X\rightarrow Y$ is $\einf$-continuous if and only if $f$ satisfies $\textnormal{ED}$ definition.
\end{theorem}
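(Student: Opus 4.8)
The plan is to follow the proof of \Cref{thm:Stcontiffedclass} essentially line for line, with the $\nicefrac{1}{n}$-balls replaced by $\einf$-balls --- that is, $\dis$-balls whose radius lies in some $\Delta^{\underdownarrow{m}}$ --- and with the $\textnormal{ED}$ condition of \Cref{def:ED} replacing $\textnormal{ED}_\textnormal{CLASS}$. As there, the two implications are proved separately, and the analytic bookkeeping (triangle inequality for $\dis$, shrinking of balls) is identical to the classical case; the only genuinely new ingredient is a short lemma reconciling the quantifier ranges in the two definitions.

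First I would record two preparatory facts. \emph{(a) Every $\dis$-ball $B_{\mathbf{z}}(\mathbf{r})$ with $\mathbf{r}>\textbf{0}$ is $\einf$-open.} Indeed, for $\mathbf{w}\in B_{\mathbf{z}}(\mathbf{r})$ the element $\mathbf{r}-\dis(\mathbf{w},\mathbf{z})$ is positive, and by fact (b) below there is a positive $\einf'\in\Delta^{\underdownarrow{0}}$ strictly below it, so the triangle inequality confines $B_{\mathbf{w}}(\einf')$ to $B_{\mathbf{z}}(\mathbf{r})$. \emph{(b) For every positive $\mathbf{r}\in\Rzl$ there is a positive $\einf\in\Delta^{\underdownarrow{0}}$ with $\einf<\mathbf{r}$} --- take a small positive real if $\mathbf{r}$ is not infinitesimal, and the monomial $a_k\mathbf{\epsilon}^{k+1}$ one degree below the leading term $a_k\mathbf{\epsilon}^{k}$ of $\mathbf{r}$ if $\mathbf{r}$ is infinitesimal --- and conversely every positive element of any $\Delta^{\underdownarrow{m}}$ is a legitimate radius in \Cref{def:ED}. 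Fact (b) is exactly what lets one move freely between the ``arbitrary positive radius'' of \Cref{def:ED} and the ``$\Delta^{\underdownarrow{m}}$-radius'' appearing in the definition of $\einf$-openness.

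The forward direction ($\einf$-continuity $\Rightarrow$ $\textnormal{ED}$) then goes: fix $\mathbf{c}\in X$ and $\einf_1>\textbf{0}$; by (a) the ball $B_{f(\mathbf{c})}(\einf_1)\subseteq Y$ is $\einf$-open, so $f^{-1}(B_{f(\mathbf{c})}(\einf_1))$ is $\einf$-open in $X$, hence contains an $\einf$-ball $B_{\mathbf{c}}(\einf_2)$ about $\mathbf{c}$ with $\einf_2>\textbf{0}$, and this $\einf_2$ is precisely the radius demanded by \Cref{def:ED}. For the converse ($\textnormal{ED}\Rightarrow\einf$-continuity), let $U\subseteq Y$ be $\einf$-open and write $U=\bigcup_{\mathbf{y}\in U}B_{\mathbf{y}}(\einf_{\mathbf{y}})$ with each $\einf_{\mathbf{y}}\in\Delta^{\underdownarrow{m}}$; given $\mathbf{x_0}\in f^{-1}(U)$, pick $\mathbf{y_0}$ with $f(\mathbf{x_0})\in B_{\mathbf{y_0}}(\einf_{\mathbf{y_0}})$, set $\einf=\einf_{\mathbf{y_0}}-\dis(f(\mathbf{x_0}),\mathbf{y_0})>\textbf{0}$, apply $\textnormal{ED}$ at $\mathbf{x_0}$ to obtain a positive $\einf_2$ with $\dis(\mathbf{x},\mathbf{x_0})<\einf_2\Rightarrow\dis(f(\mathbf{x}),f(\mathbf{x_0}))<\einf$, shrink $\einf_2$ into some $\Delta^{\underdownarrow{m}}$ using fact (b), and finish with the estimate $\dis(f(\mathbf{x}),\mathbf{y_0})\leq\dis(f(\mathbf{x}),f(\mathbf{x_0}))+\dis(f(\mathbf{x_0}),\mathbf{y_0})<\einf_{\mathbf{y_0}}$, exactly as in \Cref{thm:Stcontiffedclass}, to conclude $B_{\mathbf{x_0}}(\einf_2)\subseteq f^{-1}(U)$.

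The main obstacle is purely the quantifier mismatch between the two definitions: \Cref{def:ED} lets $\einf_1,\einf_2$ range over all of $\Rzl>\textbf{0}$, whereas $\einf$-openness is phrased via radii in the sets $\Delta^{\underdownarrow{m}}$, which do not exhaust $\Rzl>\textbf{0}$ (for instance $\mathbf{\epsilon}+\textbf{1}$ belongs to none of them). Making fact (b) precise --- verifying that these two families of radii are nonetheless mutually cofinal below any positive element, so that they generate the same balls and hence the same topology and the same notion of continuity --- is the one step that requires care; once it is in place, the remainder is a transcription of the proof of \Cref{thm:Stcontiffedclass}.
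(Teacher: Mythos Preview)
Your proposal is correct and follows exactly the route the paper takes: the paper's own proof is a one-line remark that the argument of \Cref{thm:Stcontiffedclass} goes through verbatim once the $\nicefrac{1}{n}$-radii are replaced by $\einf\in\Rzl$. Your treatment is in fact more scrupulous than the paper's, since you isolate and resolve the quantifier mismatch between the $\Delta^{\underdownarrow{m}}$-radii of \Cref{def:e-open} and the arbitrary positive $\Rzl$-radii of \Cref{def:ED} via the cofinality observation in your fact~(b), a point the paper's proof sketch leaves implicit.
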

\begin{proof}
	The proof of this theorem is similar with the one in Theorem \ref{thm:Stcontiffedclass} with some slight modifications in the distances (from $\nicefrac{1}{n}$ for some $n\in\mathbb{N}$ into $\mathbf{\einf}\in\Rzl$).
\end{proof}

\subsection{Convergence}

When we are talking about sequences, it is necessary to talk also about what it means when we say that a sequence is convergent to a particular number. This subsection presents not only some possible definitions that can be used to define convergence in $\Rzl$, but also the problems which occur when we apply them in $\Rzl$.

\begin{definition}[Classical Convergence]
	\label{def:CC}
	A sequence $s_n$ converges to $s$ iff,
	\begin{center}
		$\forall m\in\mathbb{N}$, $\exists N$ such that $\forall n>N$, $\abs{s_n-s}<\frac{1}{m}$.
	\end{center}
	We write $\textnormal{CC}(s_n,s)$ to denote that a sequence $s_n$ is classically convergent to $s$.
\end{definition}

\noindent \Cref{def:CC} above is the standard definition of how we define the notion of convergent classically. 

%\begin{remark}
%	\label{remark:disproveConv}
%	To show that a sequence $s_n$ does not converge to $s$, we just have to show that
%	\begin{center}
%		$\exists n\in\mathbb{N}$, $\forall N$ such that $\exists n>N$ where $\abs{s_n-s}\geq \frac{1}{n}$.
%	\end{center}
%\end{remark}

\begin{definition}[Hyperconvergence]
	\label{def:HC}
	A sequence $s_n$ converges to $s$ iff,
	\begin{center}
		$\forall r>0$, $\exists N$ such that $\forall n>N$, $\abs{s_n-s}<r$.
	\end{center}
	We write $\textnormal{HC}(s_n,s)$ to denote that a sequence $s_n$ is hyperconvergent to $s$. The interpretation of $r$ can be either in $\R$ or in $\Rzl$.
\end{definition}

%\begin{remark}
%	\label{remark:disproveHyperconv}
%	To show that a sequence does not hyperconverge to $s$, we just have to show that
%	\begin{center}
%		$\exists r>0$, $\forall N$ such that $\exists n>N$ where $\abs{s_n-s}\geq r$.
%	\end{center}
%\end{remark}

\begin{example}
	Suppose that we have a sequence $s_n=\epsilon^n$ as follows:
	\begin{center}
		$S_1=\epsilon=\langle\widehat{0},1,0,\dots\rangle$\\
		$S_2=\epsilon^2=\langle\widehat{0},0,1,0,\dots\rangle$\\
		$S_3=\epsilon^3=\langle\widehat{0},0,0,1,0,\dots\rangle$\\
		\text{ }\vdots\\
	\end{center}
	This sequence $s_n$ will hyperconverge to $\langle \widehat{0},0,0,\dots\rangle$, i.e. $s_n$ satisfies HC$(s_n,\textbf{0})$.
\end{example}

\begin{theorem}
	For any sequence $s_n$, $\R\models\textnormal{CC}(s_n,s)\leftrightarrow\textnormal{HC}(s_n,s)$.
\end{theorem}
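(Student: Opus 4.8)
The plan is to split the biconditional into its two implications, with the Archimedean property of $\R$ doing essentially all of the work; everything else is a one-line instantiation of quantifiers.

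First I would dispatch the direction $\textnormal{HC}(s_n,s)\rightarrow\textnormal{CC}(s_n,s)$, which needs no real argument. \Cref{def:HC} quantifies over \emph{all} positive $r$, and for every $m\in\mathbb{N}$ the number $\frac{1}{m}$ is such a positive real; instantiating the hyperconvergence condition at $r=\frac{1}{m}$ produces exactly the $N$ demanded by \Cref{def:CC}, so $\textnormal{CC}(s_n,s)$ follows immediately. For the converse $\textnormal{CC}(s_n,s)\rightarrow\textnormal{HC}(s_n,s)$, I would assume $s_n$ converges classically to $s$ and fix an arbitrary real $r>0$. By the Archimedean property of $\R$ there is $m\in\mathbb{N}$ with $\frac{1}{m}<r$. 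Applying \Cref{def:CC} to this $m$ yields some $N$ with $\abs{s_n-s}<\frac{1}{m}$ for all $n>N$, and transitivity of $<$ together with $\frac{1}{m}<r$ gives $\abs{s_n-s}<r$ for all $n>N$. Since $r$ was arbitrary, $\textnormal{HC}(s_n,s)$ holds.

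The only point that deserves a word of care — and it is barely an obstacle — is that the statement is asserted in the model $\R$, so the parameter $r$ of \Cref{def:HC} ranges over genuine real numbers; consequently the ordinary Archimedean property of $\R$ is all that is invoked, with no need to appeal to \Cref{ax:ArchimedeanPropertyOfRHat} in $\Rzl$ (indeed the analogous equivalence will \emph{fail} in $\Rzl$, where $\frac{1}{m}$ is not coinitial in the positive elements, e.g.\ no $\frac{1}{m}$ lies below $\einf$). The substance of the theorem is thus precisely the classical fact that the family $\bigl(\tfrac{1}{m}\bigr)_{m\in\mathbb{N}}$ is coinitial in the positive reals; once that is recognised, the proof is two short quantifier instantiations.
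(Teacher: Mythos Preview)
Your proof is correct and follows essentially the same route as the paper: the HC\,$\to$\,CC direction is dismissed as an immediate instantiation, and the CC\,$\to$\,HC direction is handled via the Archimedean property of $\R$ to produce $m$ with $\tfrac{1}{m}<r$. Your added remark about coinitiality and the failure in $\Rzl$ is a nice touch that goes slightly beyond what the paper records in its proof.
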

\begin{proof}
	The proof from HC to CC is obvious. Now suppose that a sequence $s_n$ satisfies CC$(s_n)$ and w.l.o.g. we assume that the $r$ in HC definition is between 0 and 1. From the Archimedean property of reals we know that for every $0<r<1$, we can find an $m\in\mathbb{N}$ such that $\frac{1}{m}<r$, and so because of CC$(s_n)$, we have $\abs{s_n-s}<\frac{1}{m}<r$. 
\end{proof}

\begin{theorem}
	For any sequence $s_n$ in $\Rzl$, \textnormal{HC}($s_n,s$) always implies \textnormal{CC}($s_n,s$). However, there exists a sequence $(t_n)$ such that
	\begin{center}
		$\Rzl\models\textnormal{CC}(t_n,s)\not\rightarrow\textnormal{HC}(t_n,s)$.
	\end{center}
\end{theorem}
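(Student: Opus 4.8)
The statement splits into a one-line implication and a counterexample, and I would treat the two halves separately.

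\emph{The easy half ($\mathrm{HC}\Rightarrow\mathrm{CC}$).} The plan is simply to specialise the radius. Assume $\mathrm{HC}(s_n,s)$ holds, with $r$ ranging (as it must for the theorem to have content) over the positive elements of $\Rzl$. Fix $m\in\mathbb{N}$; then $\tfrac1m$ is a positive element of $\R\subseteq\Rzl$, hence an admissible value of the radius $r$ in \Cref{def:HC}. Hyperconvergence then hands us an $N$ with $\abs{s_n-s}<\tfrac1m$ for every $n>N$, and since $m$ was arbitrary this is exactly the clause of \Cref{def:CC}. So $\mathrm{CC}(s_n,s)$, and there is no obstacle here at all.

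\emph{The counterexample.} I would take the constant sequence $t_n=\mathbf{\epsilon}$ for every $n$, together with the candidate limit $s=\mathbf{0}$. That $\mathrm{CC}(t_n,\mathbf{0})$ holds is immediate: for each $m\in\mathbb{N}$ we have $\abs{t_n-\mathbf{0}}=\mathbf{\epsilon}<\tfrac1m$ directly from the definition of an infinitesimal (\Cref{def:NumbersInWidehatR}), so \emph{any} $N$ witnesses the required clause. For the failure of hyperconvergence, choose the positive radius $r=\mathbf{\epsilon}^2$ (equivalently $r=\tfrac{1}{2}\mathbf{\epsilon}$). From $\mathbf{0}<\mathbf{\epsilon}<\mathbf{1}$, multiplying through by $\mathbf{\epsilon}>\mathbf{0}$ via the multiplication-order clause of Axiom~\ref{ax:TotalOrderPropertyOfRHat}, one gets $\mathbf{\epsilon}^2<\mathbf{\epsilon}$; alternatively this is visible at once from the lexicographic order, since $\mathbf{\epsilon}^2=\langle\widehat{0},0,1,0,\dots\rangle$ lies strictly below $\mathbf{\epsilon}=\langle\widehat{0},1,0,\dots\rangle$. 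Hence $\abs{t_n-\mathbf{0}}=\mathbf{\epsilon}\not<r$ for every $n$, so no $N$ can fulfil \Cref{def:HC}; that is, $\lnot\mathrm{HC}(t_n,\mathbf{0})$. Therefore $\Rzl\models\mathrm{CC}(t_n,\mathbf{0})\land\lnot\mathrm{HC}(t_n,\mathbf{0})$, which is what the theorem demands.

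\emph{Where the (minor) care is needed.} The only arithmetic facts invoked are $\mathbf{\epsilon}^2<\mathbf{\epsilon}$ and that every standard $\tfrac1m$ exceeds $\mathbf{\epsilon}$; both are routine from \Cref{def:NumbersInWidehatR} and the lexicographic ordering, so the proof has no substantive difficulty — the content lies entirely in choosing the example. What does matter is that the radius $r$ in \Cref{def:HC} be permitted to range over $\Rzl$ rather than merely over $\R$: for real radii only, hyperconvergence and classical convergence coincide (by the Archimedean property of $\R$), and the constant sequence $\mathbf{\epsilon}$ would then hyperconverge to $\mathbf{0}$ too. If a less trivial witness is wanted, one can instead use $t_n=\tfrac1n\widehat{+}\mathbf{\epsilon}$, which still converges classically to $\mathbf{0}$ yet hyperconverges to no element of $\Rzl$ whatsoever; I would nonetheless present the constant sequence as the most economical choice.
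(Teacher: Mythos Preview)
Your proof is correct. The first half matches the paper's argument exactly: specialise the radius $r$ to $\tfrac1m$.

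For the counterexample the paper instead takes $t_n=\tfrac1n$ with $s=\mathbf{0}$, and observes that any infinitesimal radius $r\in\Delta$ defeats the hyperconvergence clause (indeed $t_n$ hyperconverges to no point of $\Rzl$ at all). Your constant sequence $t_n=\mathbf{\epsilon}$ is a genuinely different and more economical witness: it makes the failure of $\mathrm{HC}(t_n,\mathbf{0})$ visible from a single inequality $\mathbf{\epsilon}\not<\mathbf{\epsilon}^2$, with no limiting behaviour to track. The trade-off is that your sequence \emph{does} hyperconverge---to $\mathbf{\epsilon}$---so it exhibits only the non-uniqueness of classical limits in $\Rzl$ (which the paper notes separately in Example~\ref{example:problemStdConv}), whereas the paper's $t_n=\tfrac1n$ illustrates the stronger phenomenon that a sequence can classically converge yet hyperconverge to nothing whatsoever. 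Both suffice for the theorem as stated; your alternative $t_n=\tfrac1n\widehat{+}\mathbf{\epsilon}$ and the paper's $\tfrac1n$ capture the same stronger point.
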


\begin{proof}
	\begin{enumerate}
		\item To prove the first clause, suppose that a sequence $s_n$ satisfies HC$(s_n,s)$. This means that we are able to find a number $N$ such that $\forall n\geq N$, $\abs{s_n-s}<r$ for any $r\in\Rzl$ which includes infinitesimals. By using the same $N$, $s_n$ will satisfy CC$(s_n,s)$ .
		\item To prove the second clause, take the sequence $t_n=\frac{1}{n}$ where $n\in\mathbb{N}$. This sequence satisfies CC$(t_n,0)$, but it does not satisfy HC$(t_n,s)$ for any $s$ (as any $r\in\Delta$ will satisfy the negation of Definition \ref{def:HC}).
	\end{enumerate}
\end{proof}

\begin{lemma}
	\label{lem:convAbsSeq}
	Let $(s_n)$ be a sequence in $\Rzl$ such that HC($s_n,s$) is hold. Then, HC($\abs{s_n},\abs{s}$) is hold.
\end{lemma}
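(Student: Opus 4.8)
The plan is to reduce the whole statement to the reverse triangle inequality in $\Rzl$. Since $\Rzl$ satisfies the additive, multiplicative, distributive and total order axioms (Propositions \ref{prop:AdditiveAxiomR^<Z}--\ref{prop:OrderAxiomR^<Z}), it is an ordered field, so the absolute value $\abs{\cdot}$ on $\Rzl$ can be treated exactly as in the classical case, and all reasoning takes place inside the consistent source chunk (the language of $\Rzl$), where classical inference is legitimate. First I would record the ordinary triangle inequality: for all $\mathbf{x},\mathbf{y}\in\Rzl$, $\abs{\mathbf{x}+\mathbf{y}}\leq\abs{\mathbf{x}}+\abs{\mathbf{y}}$, obtained by the usual case analysis on the signs of $\mathbf{x}$, $\mathbf{y}$, and $\mathbf{x}+\mathbf{y}$, using the compatibility of $\leq$ with addition (O5 of Axiom \ref{ax:TotalOrderPropertyOfRHat}).

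From this I would derive the reverse triangle inequality. Writing $\abs{\mathbf{x}}=\abs{(\mathbf{x}-\mathbf{y})+\mathbf{y}}\leq\abs{\mathbf{x}-\mathbf{y}}+\abs{\mathbf{y}}$ gives $\abs{\mathbf{x}}-\abs{\mathbf{y}}\leq\abs{\mathbf{x}-\mathbf{y}}$, and interchanging the roles of $\mathbf{x}$ and $\mathbf{y}$ together with $\abs{\mathbf{y}-\mathbf{x}}=\abs{\mathbf{x}-\mathbf{y}}$ gives $\abs{\mathbf{y}}-\abs{\mathbf{x}}\leq\abs{\mathbf{x}-\mathbf{y}}$. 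Hence $\bigl\lvert\,\abs{\mathbf{x}}-\abs{\mathbf{y}}\,\bigr\rvert\leq\abs{\mathbf{x}-\mathbf{y}}$ for all $\mathbf{x},\mathbf{y}\in\Rzl$.

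The conclusion is then immediate. Let $r>0$ be arbitrary (the argument is insensitive to whether $r$ is interpreted in $\R$ or in $\Rzl$, since the bound below is uniform). By $\textnormal{HC}(s_n,s)$ there is an $N$ with $\abs{s_n-s}<r$ for every $n>N$. Applying the reverse triangle inequality with $\mathbf{x}=s_n$, $\mathbf{y}=s$, and then chaining $\leq$ with $<$ via transitivity of the order (Axiom \ref{ax:TotalOrderPropertyOfRHat} and Definition \ref{def:Operator<>}), we obtain, for the same $N$ and all $n>N$, $\bigl\lvert\,\abs{s_n}-\abs{s}\,\bigr\rvert\leq\abs{s_n-s}<r$. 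Since $r$ was arbitrary, $\textnormal{HC}(\abs{s_n},\abs{s})$ holds. The only place demanding genuine care — which I would regard as the main obstacle — is establishing the triangle inequality for $\abs{\cdot}$ straight from the ordered-field axioms of $\Rzl$ rather than importing it from $\R$; but this is a routine finite sign-case argument, with no interaction with infinitesimals or infinities, so it goes through unchanged.
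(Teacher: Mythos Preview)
Your proof is correct and follows essentially the same approach as the paper: both apply the reverse triangle inequality $\bigl\lvert\,\abs{s_n}-\abs{s}\,\bigr\rvert\leq\abs{s_n-s}$ and chain it with the hyperconvergence bound. The paper simply invokes this inequality without justification, whereas you take the extra (but harmless) step of deriving it from the ordered-field axioms of $\Rzl$.
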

\begin{proof}
	Let $r>0\in\Rzl$ be given. Then this means that there exists $N\in\mathbb{N}$ such that $\forall m>N$, $\abs{s_m-s}<r$. Therefore, we also have
	\begin{align*}
		\forall m>N\textnormal{, }\abs{\abs{s_m}-\abs{s}}\leq\abs{s_m-s}<r.
	\end{align*}
	Hence, HC($\abs{s_n},\abs{s}$) is true.
\end{proof}

%\noindent Note that the converse of Lemma \ref{lem:convAbsSeq} is not necessarily true.

\begin{theorem}
	Let $X\subset\Rzl$ and $f:X\rightarrow\Rzl$. Then $f$ is $\einf$-continuous at $x_0\in X$ iff for any sequence $x_n$ in $X$ that satisfies HC$(x_n,x_0)$, the sequence $f(x_n)$ satisfies HC$(f(x_n),f(x_0))$.
\end{theorem}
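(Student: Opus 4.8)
The plan is to reduce the statement to the $\varepsilon$--$\delta$ formulation of continuity via Theorem~\ref{thm:etopcontIffED}, so that what remains is the classical ``sequential criterion for continuity'', and then to run the two standard implications, the only genuinely new ingredient being the choice in the converse direction of a witnessing sequence that \emph{hyper}converges rather than merely converges classically. Throughout I read the parameter $r$ in Definition~\ref{def:HC} as ranging over the positive elements of $\Rzl$, since this is the reading compatible with the $\Rzl$-valued radii $\einf_1,\einf_2$ in the ED definition (Definition~\ref{def:ED}); under the $\R$-valued reading the corresponding statement would instead pair with $\textnormal{ED}_\textnormal{CLASS}$ and St-continuity. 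Unwinding $\einf$-continuity of $f$ at $x_0$ (cf.\ the pointwise form of the argument in Theorem~\ref{thm:etopcontIffED}) it is equivalent to: for every $\einf_1>\mathbf{0}$ in $\Rzl$ there is $\einf_2>\mathbf{0}$ in $\Rzl$ with $\abs{f(\mathbf{x})-f(x_0)}<\einf_1$ whenever $\abs{\mathbf{x}-x_0}<\einf_2$.

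For the forward implication, assume $f$ is $\einf$-continuous at $x_0$ and let $(x_n)$ be a sequence in $X$ with $\textnormal{HC}(x_n,x_0)$. Given $r>\mathbf{0}$ in $\Rzl$, instantiate the ED property with $\einf_1:=r$ to obtain $\einf_2>\mathbf{0}$ such that $\abs{\mathbf{x}-x_0}<\einf_2$ forces $\abs{f(\mathbf{x})-f(x_0)}<r$; then feed this $\einf_2$ into $\textnormal{HC}(x_n,x_0)$ to get $N$ with $\abs{x_n-x_0}<\einf_2$ for all $n>N$, whence $\abs{f(x_n)-f(x_0)}<r$ for all $n>N$. As $r$ was arbitrary, this is precisely $\textnormal{HC}(f(x_n),f(x_0))$.

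For the converse I would argue by contraposition. Suppose $f$ is not $\einf$-continuous at $x_0$; then ED fails there, i.e.\ there is a fixed $\einf_1>\mathbf{0}$ such that for every $\einf_2>\mathbf{0}$ some $\mathbf{x}\in X$ satisfies $\abs{\mathbf{x}-x_0}<\einf_2$ together with $\abs{f(\mathbf{x})-f(x_0)}\geq\einf_1$. Here one cannot simply take radii $\tfrac1n$, because (as noted after Definition~\ref{def:HC}, where $t_n=\tfrac1n$ is classically convergent but not hyperconvergent to $\mathbf{0}$) such a sequence would not hyperconverge to $x_0$. Instead, for each $n\in\mathbb{N}$ apply the failure of ED with $\einf_2:=\mathbf{\epsilon}^n$ to obtain $x_n\in X$ with $\abs{x_n-x_0}<\mathbf{\epsilon}^n$ and $\abs{f(x_n)-f(x_0)}\geq\einf_1$. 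Since $0<\epsilon<1$ the powers $\mathbf{\epsilon}^n$ are strictly decreasing, and for any $r>\mathbf{0}$ in $\Rzl$ its leftmost nonzero coordinate in the lexicographic order is positive, say at position $j$, so $\mathbf{\epsilon}^N<r$ for any $N>\max\{j,0\}$; combining this with monotonicity yields $\abs{x_n-x_0}<\mathbf{\epsilon}^n\leq\mathbf{\epsilon}^N<r$ for all $n>N$, i.e.\ $\textnormal{HC}(x_n,x_0)$. On the other hand $\neg\textnormal{HC}(f(x_n),f(x_0))$, since taking $r=\einf_1$ in Definition~\ref{def:HC} is impossible as $\abs{f(x_n)-f(x_0)}\geq\einf_1$ for \emph{every} $n$. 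Thus $(x_n)$ witnesses the failure of the sequential condition, completing the contrapositive.

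The main obstacle I anticipate is not the skeleton of the argument, which is classical, but two finer points. The first is the one highlighted above: one must ensure the witnessing sequence is genuinely hyperconvergent, which forces a rapidly shrinking radius such as $\mathbf{\epsilon}^n$ plus the short verification (via the lexicographic order on $\Rzl$) that $\mathbf{\epsilon}^n$ hyperconverges to $\mathbf{0}$. The second is to be careful with the negation of the ED statement in the paraconsistent setting: ``$f$ is not $\einf$-continuous at $x_0$'' must be used in the strong form ``for some $\einf_1$, for every $\einf_2$ the implication fails'', with ``the implication fails'' read, in accordance with Definitions~\ref{def:Operator<>} and~\ref{def:ED}, as ``$\abs{\mathbf{x}-x_0}<\einf_2$ holds while $\abs{f(\mathbf{x})-f(x_0)}\geq\einf_1$''.
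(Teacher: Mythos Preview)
Your proposal is correct and follows essentially the same two-part structure as the paper's proof: the forward implication is handled directly by composing the ED condition with the HC definition, and the converse is argued by contraposition from the negation of ED. In fact your converse is more carefully written than the paper's: the paper simply says ``for all $n\in\mathbb{N}$, there exists $x_n\in X$ such that $\abs{x_n-x_0}<\einf_2$'' without ever specifying which radius is used for each $n$, whereas you make the choice $\einf_2:=\mathbf{\epsilon}^n$ explicit and verify via the lexicographic order that this genuinely yields $\textnormal{HC}(x_n,x_0)$ (and you correctly flag that $\tfrac{1}{n}$ would not suffice here).
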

\begin{proof}
	Suppose that $f$ is $\einf$-continuous at $x_0$ and let the sequence $x_n$ be defined in $X$ and that $x_n$ hyper converges to $x_0$. Now let $\einf>0$ be given. Then from Theorem \ref{thm:etopcontIffED}, there exists $\einf_2>0\in\Rzl$ such that
	\begin{center}
		if $x\in X$ and $\abs{x-x_0}<\einf_2$, then $\abs{f(x)-f(x_0)}<\einf$.
	\end{center}
	Now since $x_n$ hyper converges to $x_0$, then there exists $N\in\mathbb{N}$ such that $\forall n\geq N$ $\abs{x_n-x_0}<\einf_2$. Thus we have
	\begin{align*}
	\forall n\geq N \textnormal{ } \abs{f(x_n)-f(x_0)}<\einf.
	\end{align*}
	and so the sequence $f(x_n)$ hyper converges to $f(x_0)$.\newline
	For the converse, we will prove the contrapositive. Suppose that $f$ is not $\einf$-continuous at $x_0$. Then it means that there exists $\einf_0>0\in\Rzl$ such that for all $\einf_2>0\in\Rzl$, there exists $x\in X$ such that $\abs{x-x_0}<\einf_2$ but $\abs{f(x)-f(x_0)}>\einf_0$. In particular, for all $n\in\mathbb{N}$, there exists $x_n\in X$ such that $\abs{x_n-x_0}<\einf_2$ and $\abs{f(x_n)-f(x_0)}>\einf_0$. Thus $x_n$ is a sequence in $X$ that hyper converges to $x_0$, but the sequence $f(x_n)$ does not hyper converge to $f(x_0)$.
\end{proof}

\begin{definition}
	Let $s_n$ be a sequence in $\Rzl$. Then we say that $s_n$ is a hyper-Cauchy sequence iff $\forall\einf\in\Rzl$, $\exists N\in\mathbb{N}$ such that
	\begin{align*}
		\forall l,m\geq N\textnormal{ }\abs{s_l-s_m}<\einf.
	\end{align*}
\end{definition}

\begin{theorem}
	Every hyper convergent sequence in $\Rzl$ is a hyper-Cauchy sequence.
\end{theorem}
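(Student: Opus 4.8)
The plan is to imitate the familiar classical argument: a hyper convergent sequence has all its tail terms close to a single limit point, so by the triangle inequality they are close to each other. Concretely, suppose $s_n$ hyper converges to some $s$, i.e. $\textnormal{HC}(s_n,s)$ holds, and let an arbitrary $\einf>\textbf{0}$ in $\Rzl$ be given. Since $\Rzl$ is a field (by the results of \Cref{sec:TheCreationofTheNewSets}), the element $\einf/\textbf{2}=\einf\widehat{\times}(\textbf{1}/\textbf{2})$ exists in $\Rzl$, and since the order is lexicographical and $\einf>\textbf{0}$, one checks at once that $\einf/\textbf{2}>\textbf{0}$ as well.

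First I would invoke \Cref{def:HC} with the positive radius $\einf/\textbf{2}$: there is an $N\in\mathbb{N}$ such that $\abs{s_n-s}<\einf/\textbf{2}$ for all $n>N$. Next, for any $l,m\geq N$ I would write $s_l-s_m=(s_l-s)+(s-s_m)$ and apply the triangle inequality for $\abs{\cdot}$ in $\Rzl$ — this is available since $\dis(\mathbf{x},\mathbf{y})=\abs{\mathbf{y}-\mathbf{x}}$ is a metric on $\Rzl$ (the remark following \Cref{def:metricAndPseudoMetric}) — to obtain
\begin{align*}
	\abs{s_l-s_m}\leq\abs{s_l-s}+\abs{s-s_m}<\einf/\textbf{2}+\einf/\textbf{2}=\einf.
\end{align*}
Since $\einf$ was arbitrary, this shows $s_n$ is a hyper-Cauchy sequence.

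The only points that need any care — and they are minor — are the two "field-and-order" facts used above: that $\einf/\textbf{2}$ again lies in $\Rzl$ and is strictly positive, and that $\abs{\cdot}$ obeys the triangle inequality on $\Rzl$. I expect the first to follow immediately from the multiplicative axioms (\Cref{prop:MultiplicativeAxiomR^<Z}) together with the order axioms (\Cref{prop:OrderAxiomR^<Z}, using $\einf>\textbf{0}$ and $\textbf{1}/\textbf{2}>\textbf{0}$), and the second from the fact that $\dis$ has already been verified to be a metric. There is no substantive obstacle; the halving trick transfers verbatim because division by $\textbf{2}$ is legitimate in the field $\Rzl$.
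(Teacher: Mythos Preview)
Your proof is correct and follows essentially the same route as the paper: apply $\textnormal{HC}(s_n,s)$ with radius $\einf/\textbf{2}$ to obtain an $N$, then use the triangle inequality on $\abs{s_l-s_m}=\abs{(s_l-s)-(s_m-s)}$ to bound it by $\einf/\textbf{2}+\einf/\textbf{2}=\einf$. The only difference is that you spell out the justifications for $\einf/\textbf{2}>\textbf{0}$ and the triangle inequality, which the paper leaves implicit.
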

\begin{proof}
	Let $s_n$ be a sequence in $\Rzl$ that satisfies HC$(s_n,s)$. We want to show that $s_n$ is hyper-Cauchy. Let $\einf\in\Rzl$ be given. Then there exists $N\in\mathbb{N}$ such that $\forall n>N$, $\abs{s_n-s}<\frac{\einf}{2}$. Then for all $l,m>N$, we have
	\begin{align*}
		\abs{s_l-s_m}=\abs{s_l-s-(s_m-s)}\leq\abs{s_l-s}+\abs{s_m-s}<\frac{\einf}{2}+\frac{\einf}{2}=\einf
	\end{align*}
	and so $s_n$ is hyper-Cauchy.
\end{proof}

\begin{conjecture} 
%	\improvement{I think this is right intuitively but I cannot really prove it. Any help?}
	The set $\Rzl$ is hyper-Cauchy complete with respect to the $\einf$-topology.
\end{conjecture}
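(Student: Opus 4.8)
The plan is to recognise that $\Rzl$, under the addition and multiplication defined earlier and the lexicographic order, is precisely the ordered field of formal Laurent series in $\epsilon$ over $\R$, carrying the $\epsilon$-adic valuation $v(\mathbf{x})=\min\{z:x_z\neq 0\}$ (with $v(\mathbf{0})=+\infty$), and that ``hyper-Cauchy'' is exactly ``Cauchy with respect to this valuation''. Two elementary facts make the translation work: (i) for every $\einf>\mathbf{0}$ one has $\epsilon^{v(\einf)+1}<\einf$ — a one-line comparison of leading coefficients in the lexicographic order — so the sequence $(\epsilon^k)_{k\in\mathbb{N}}$ is coinitial in the positive elements; and (ii) $v(\mathbf{a})\geq k+1$ forces $\abs{\mathbf{a}}<\epsilon^{k}$, again by comparing leading coefficients. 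Consequently $(s_n)$ is hyper-Cauchy iff $\forall k\in\mathbb{N}\ \exists N\ \forall l,m\geq N\ \ v(s_l-s_m)\geq k$, and $\textnormal{HC}(s_n,s)$ says the same with $s_l-s_m$ replaced by $s_n-s$; so the conjecture amounts to asserting that $\Rzl$ is valuation-complete. (Implicit here is that ``$\forall\einf$'' in the hyper-Cauchy definition means ``$\forall\einf>\mathbf{0}$''; otherwise the notion is vacuous and the claim trivial.)

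Given a hyper-Cauchy sequence $(s_n)$, I would first apply the hypothesis with the fixed appreciable bound $\einf=\mathbf{1}$ to obtain $N_0$ with $v(s_l-s_m)\geq 0$ for all $l,m\geq N_0$; this forces all such $s_n$ to share a common \emph{infinity part}, namely a fixed finite tuple supported on indices $\geq -M$ for some $M\in\mathbb{N}$. Next, for each $k$ pick $N_k\geq N_0$ with $v(s_l-s_m)\geq k$ for $l,m\geq N_k$; then for every fixed index $z$ the real coordinate $(s_n)_z$ is eventually constant (equal to some $\sigma_z$ once $n\geq N_{z+1}$), and $\sigma_z=0$ for $z<-M$. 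Define the candidate limit $\mathbf{s}=\langle\sigma_z:z\in\mathbb{Z}\rangle$; since $\sigma_z=0$ for all $z<-M$, it has only finitely many nonzero negative (infinity) coordinates, hence genuinely lies in $\Rzl$. Finally, to verify $\textnormal{HC}(s_n,\mathbf{s})$: given $\einf>\mathbf{0}$ put $k=v(\einf)+2$; once $n$ exceeds both $N_0$ and $\max\{N_{z+1}:0\leq z<k\}$ (a maximum over finitely many indices), every coordinate of $s_n$ of index $<k$ has already stabilised to $\sigma_z$ — the negative ones by $N_0$, the rest by the $N_{z+1}$ — so $v(s_n-\mathbf{s})\geq k$ and therefore $\abs{s_n-\mathbf{s}}<\epsilon^{v(\einf)+1}<\einf$ by facts (ii) and (i). Thus every hyper-Cauchy sequence hyper-converges in $\Rzl$.

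The computations above are routine, so the proof has only two delicate points worth care. The first is the precise dictionary between the lexicographic order and the valuation: the inequalities $\epsilon^{v(\einf)+1}<\einf$ and ($v(\mathbf{a})>k\Rightarrow\abs{\mathbf{a}}<\epsilon^{k}$) must be established cleanly, and one has to track the unavoidable off-by-one shift between ``the first $k$ coordinates agree'' and ``$\abs{s_n-\mathbf{s}}<\einf$''. The second, and the real heart of the matter, is the step showing the coordinatewise limit $\mathbf{s}$ is an \emph{element of $\Rzl$} at all: a bare coordinatewise limit could a priori acquire infinitely many nonzero infinity-coordinates, and ruling that out is exactly where a fixed, non-infinitesimal instance of the hyper-Cauchy hypothesis is essential. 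It is also worth noting explicitly that the value ``group'' here is order-isomorphic to $\mathbb{Z}$, of cofinality $\omega$, so ordinary $\mathbb{N}$-indexed sequences already detect completeness and no transfinite or net-theoretic refinement is needed — which is the structural reason the $\einf$-topology version of Cauchyness behaves this tamely.
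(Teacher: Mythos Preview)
The paper states this as a \emph{conjecture} and offers no proof at all, so there is nothing on the paper's side to compare against. Your proposal, by contrast, is a correct proof, and thereby resolves the conjecture affirmatively.

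Your approach is the standard one: identify $\Rzl$ with the field $\R((\epsilon))$ of formal Laurent series, observe that the lexicographic order induces exactly the $\epsilon$-adic valuation $v(\mathbf{x})=\min\{z:x_z\neq 0\}$, and then run the usual coordinatewise-stabilisation argument for valuation completeness. The two points you flag as delicate are indeed the only places requiring care, and you handle both correctly. In particular, the step using the appreciable bound $\einf=\mathbf{1}$ to force a common, finitely supported infinity part for the tail of the sequence is exactly what guarantees the coordinatewise limit lands in $\Rzl$ rather than in the larger space $\Rz$; this is the one genuinely $\Rzl$-specific ingredient and you isolate it cleanly. The off-by-one bookkeeping between ``coordinates up to index $k$ agree'' and ``$\abs{\,\cdot\,}<\einf$'' is also tracked correctly via your facts (i) and (ii).

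One small presentational remark: your parenthetical that ``$\forall\einf$'' must mean ``$\forall\einf>\mathbf{0}$'' is well taken and matches how the paper uses the definition elsewhere (e.g.\ in the proof that hyper-convergent implies hyper-Cauchy). It would do no harm to state this explicitly at the outset rather than as an aside.
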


\begin{lemma}
	Let $s_n$ be a sequence in $\Rzl$ whose members are just real numbers -- that is, for all $s\in s_n$, $\texttt{Nst}_\epsilon(s)=\texttt{Nst}_\omega(s)=\emptyset$. Then $s_n$ is hyper-Cauchy if and only if there exists $N\in\mathbb{N}$ such that $s_m=s_N$ for all $m\geq N$.
\end{lemma}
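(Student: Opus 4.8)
The plan is to prove the two directions separately; the reverse implication is immediate and the forward implication is where the hypothesis ``all members are real'' does the work.

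For ($\Leftarrow$): suppose $N$ witnesses that $s_m = s_N$ for all $m \ge N$. Then for \emph{every} positive $\einf \in \Rzl$ and all $l,m \ge N$ we have $\abs{s_l - s_m} = \abs{s_N - s_N} = \textbf{0} < \einf$, so the same $N$ serves for every $\einf$ and $s_n$ is hyper-Cauchy.

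For ($\Rightarrow$): the idea is to test the hyper-Cauchy condition against the single infinitesimal $\mathbf{\epsilon} = \langle \widehat{0},1,0,\dots\rangle$ of Example \ref{ex:ConvertNumberToR^Z<}, which is positive in $\Rzl$. We obtain an $N \in \mathbb{N}$ with $\abs{s_l - s_m} < \mathbf{\epsilon}$ for all $l,m \ge N$. Next I would note, using the componentwise definition of addition in $\Rzl$ (Definition \ref{def:AdditionAndMultiplicationInR^<Z}), that since each $s_n$ has empty $\texttt{Nst}_\epsilon$ and $\texttt{Nst}_\omega$ parts, the difference $s_l - s_m$ is again a purely real member, of the form $\langle \widehat{r}, 0, 0, \dots\rangle$ with $r \in \R$. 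The crux is then to show that $\abs{\langle \widehat{r},0,\dots\rangle} < \mathbf{\epsilon}$ forces $r = 0$: in the lexicographical order on $\Rzl$ the standard coordinate outranks every infinitesimal coordinate, so if $r \ne 0$ then $\abs{\langle \widehat{r},0,\dots\rangle} = \langle \widehat{\abs{r}},0,\dots\rangle$ has strictly positive standard part while $\mathbf{\epsilon}$ has standard part $0$, whence $\abs{\langle \widehat{r},0,\dots\rangle} > \mathbf{\epsilon}$ --- a contradiction. Therefore $s_l = s_m$ for all $l,m \ge N$; in particular $s_m = s_N$ for all $m \ge N$, as desired.

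The only delicate point --- and the step I expect to require the most care --- is that last comparison: verifying carefully, from the definition of the lexicographical ordering and the placement of the standard coordinate relative to the infinitesimal coordinates, that a nonzero real exceeds $\mathbf{\epsilon}$. This is essentially a restatement of the infinitesimal property of $\mathbf{\epsilon}$, but it should be spelled out rather than asserted. Everything else --- closure of $\Rzl$ under subtraction and the fact that this closure respects ``being purely real'' --- is routine.
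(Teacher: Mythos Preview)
Your proof is correct and follows essentially the same approach as the paper: both directions are handled identically, with the forward direction testing the hyper-Cauchy condition against the particular infinitesimal $\mathbf{\epsilon}$ and observing that a purely real difference bounded by $\mathbf{\epsilon}$ must vanish. Your write-up is in fact more careful than the paper's, which simply asserts that $\abs{s_m-s_l}<\epsilon$ together with the realness of the terms yields $\abs{s_m-s_l}=0$ without spelling out the lexicographic comparison you (rightly) flag as the one nontrivial step.
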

\begin{proof}
	Let $s_n$ be a hyper-Cauchy sequence in $\Rzl$ whose members are real numbers. Then there exists $N\in\mathbb{N}$ such that
	\begin{align}
		\label{eq:5.13}
		\abs{s_m-s_l}<\epsilon \textnormal{ for all } m,l\geq N.
	\end{align}
	Since $s_n$ is a sequence of real numbers, we obtain from Inequality \ref{eq:5.13} that for all $m,l\geq N$, $\abs{s_m-s_l}=0$ and so $s_m=s_N$ for all $m\geq N$.\newline
	Conversely, let $s_n$ be a sequence in $\Rzl$ whose members are real numbers and assume that there exists $N\in\mathbb{N}$ such that $s_m=s_N$ for all $m\geq N$. Now let $\einf>0$ be given. We have that for all $l,m\geq N$, $\abs{s_m-s_l}=0<\einf$ and so $s_n$ is hyper-Cauchy.
\end{proof}

Another possible way to define convergence in our set is through the concept of $\ell^\infty$ as follows:

\begin{definition}[$\Rzl$-Convergence]
	\label{def:linftyConv}
	Suppose that $s_n$ is a sequence where every member of it is another sequence itself, i.e.
	\begin{center}
		$s_n=(s_n)_1,(s_n)_2,(s_n)_3,\dots,(s_n)_i,\dots$.
	\end{center}
	Then, $s_n$ converges to $\mathbf{s}$ iff $\forall m\in\mathbb{N}$, $\exists N$ such that 
	\begin{center}
		$\forall n\geq N$, $\forall i$ $\abs{(s_n)_i-s_i}<\frac{1}{m}$.
	\end{center}
	We write RC$(s_n,s)$ to denote that a sequence $s_n$ is $\Rzl$-convergent to $s$.	
\end{definition}

\begin{example}
	The sequence $s_n=\langle\widehat{\frac{1}{n}},0,0,\dots\rangle$ is $\Rzl$-convergent to $\textbf{0}$.
\end{example}

The next interesting question is which of the three definitions above can be used to define convergence in $\Rzl$? Unfortunately, neither of them is adequate to serve as \textit{the} definition of convergence in our set. The three examples below demonstrate the reason. The first example shows that when Classical Convergence is adopted in $\Rzl$, convergence is no longer unique. While the second one shows how adopting \Cref{def:HC} gave something unexpected occurs in our set, the last example shows why $\Rzl$-convergence is not adequate.

\begin{example}
	\label{example:problemStdConv}
	Suppose that $s_n$ is a sequence defined by:
	\begin{center}
		$s_n=\langle \widehat{0},n,0,\dots\rangle$.
	\end{center}
	Then by using \Cref{def:CC} above and the fact that any infinitesimals are less than any rational numbers, $s_n$ classically converges to $100\epsilon$, $200\epsilon$, $300\epsilon$, and so on. In other words, the sequence $s_n$ satisfies (CC$(s_n,100\epsilon)$), (CC$(s_n,200\epsilon)$), (CC$(s_n,300\epsilon)$), and so on.
\end{example}

\begin{example}
	\label{example:hyperconvFails}
	Using Definition \ref{def:HC}, the sequence $s_n=\langle\widehat{\frac{1}{n}},0,0,\dots\rangle$ does not converge in the usual sense to 0, i.e. $s_n$ does not satisfy HC$(s_n,\textbf{0})$. Taking $r=\epsilon=\langle\widehat{0},1,0,\dots\rangle$ and $n=N+1$ will show this.
\end{example}

\begin{example}
	The sequence $s_n=\epsilon^n$ does not $\Rzl$-converge to $\textbf{0}$, as it should do intuitively.
\end{example}

Thus, this leaves us with the three definitions of convergence used in $\Rzl$. There is no one definition of convergence in our set. This is not necessarily a bad thing, it simply means that our notion of convergence will differ from that of classical analysis.

Note that our attempts to have a proper notion of continuity and convergence in $\Rzl$ can be used in the area of reverse mathematics. From what we have done here, it can help us to gain a better understanding about some \textit{necessary condition}, for example, for a function $f$ to be continuous or for a sequence to be convergent.

\section{Some Notes on The Computability in $\Rzl$}
\label{sec:ComputabilityInRZ<}

A computable function is a function $f$ which could, in principle, be calculated using a mechanical calculation tool and given a finite amount of time. In the language of computer science, we would say that there is an algorithm computing the function. A computable real number is, in essence, a number whose approximations are given by a computable function.

The notion of a function $\mathbb{N} \to \mathbb{N}$ being computable is well understood. In fact, all definitions that so far capturing this idea (such as Turing Machines, Markov Algorithms, Lambda Calculus, the (partial) recursive functions, and many more) have all led to the same class of functions. This, in turn, has led to the so called Church-Markov-Turing thesis, which says that this class is exactly what computable intuitively means. Given computable pairing functions also, immediately, lead to a notion of computability for other function types such as $\mathbb{N}^k \to \mathbb{N}^m$, $\mathbb{N} \to \mathbb{Z}$ or $\mathbb{N} \to \mathbb{Q}$. If we see a real number as a sequence of rational approximation, we also get a definition of a computable real number.

However, we have to be a bit careful. There are many equivalent formulations for when a real number $r$ is computable, that work well in practice. This happens such as when 

\begin{itemize}
	\item there is a finite machine that computes a quickly converging\footnote{That is with a fixed modulus of Cauchyness.} Cauchy sequence that converges to $r$, or
	\item it can be approximated by some computable function $f:\mathbb{N}\rightarrow\mathbb{Z}$ such that: given any positive integer $n$, the function produces an integer $f(n)$ such that
	\begin{center}
		$\frac{f(n)-1}{n}\leq a \leq \frac{f(n)+1}{n}$.
	\end{center}
\end{itemize}

\noindent We denote the set of all computable real numbers by $\R_c$. It is well known (and also well studied) that many real numbers, such as $\pi$ or $e$, are computable. However, not every real number is computable.

One possibility that does not turn out to be useful is to write down a real number by using its decimal representation.\footnote{Consider a number $r$ such that there is an algorithm whose input is $n$, and it will give the $n^\textnormal{th}$-digit of $r$'s decimal representation.} The set of all real numbers  that have a computable decimal representation is denoted by $\R_d$.

\begin{remark}
	Although the set $\R_d$ is closed under the usual arithmetic operations, we have to be careful of what it really means. Take, for example, addition. We know that if $x$ and $y$ are in $\R_d$, then $x+y$ is also in $\R_d$. However, it does \emph{not} mean that the addition operator itself is computable.
\end{remark}

These ideas of computability can be extended to infinitesimals. In $\Rzl$, we define its member to be computable if it satisfies the condition as stated in Definition \ref{def:computablenumberRZ<}.

\begin{definition}
	\label{def:computablenumberRZ<}
	A number $\mathbf{z}\in\Rzl$ is computable iff there is a computable function $f$ such that $f(n,\cdot)$ are computable numbers and
	\begin{center}
		$\mathbf{z}=\langle f(1,\cdot),f(2,\cdot),\dots,\widehat{f(l,\cdot)},\dots\rangle$	
	\end{center}
	where $l=f(0,0)$ denotes the index where the $\texttt{St}(\mathbf{z})$ is. We denote the set of all computable members of $\Rzl$ by $\Rzlc$.
\end{definition}

In this section, we showed that the standard arithmetic operations (functions) in $\Rzl$ are computable (provided that the domain and codomain of those functions are (in) $\Rzlc$). This was done by explicitly showing the program for each one of them. We actually uses a concrete implementation of these ideas in the programming language Python, whose syntax should be intuitively understandable even by those not familiar with it. There is also no need to show that our programs are correct, since they are so short that such a proof would be trivial.

Assuming that we already had a working implementation of $\R_c$, our class $\Rzlc$ could be implemented as in Listing \ref{mycode}. There we defined the members of our set $\Rzlc$ (basically just a container for the index $l$ as in \Cref{def:computablenumberRZ<} and the sequence of digits) and how their string representation would look like.

\ContinueLineNumber

\begin{lstlisting}[language=Python, caption=How to define the members of $\Rzlc$., label=mycode, escapeinside={(*}{*)}, numbers=none]
class infreal:
	def __init__(self, digits, k=0):
		self.k = k
		self.digits = digits
	def __repr__(self):
		if self.k == 0:
			return "^" + ", ".join([str(self.digits(i)) for i in range(self.k,self.k+7)]) + ", ..."
		else: 
			return ", ".join([str(self.digits(i)) for i in range(self.k)]) + ", ^" + ", ".join([str(self.digits(i)) for i in range(self.k,self.k+7)]) + ", ..."
	def __getitem__(self, key): return self.digits(key)
\end{lstlisting}

\begin{example}
	\label{ex:6.4}
	Suppose that we want to write the number $\mathbf{1}=\langle\widehat{1},0,0,\dots\rangle$. Then by writing
	\begin{center}
		\pycode{One=infreal(lambda n:one if n==0 else zero, 0)}
	\end{center}
	where \texttt{\textcolor{red}{zero}} and \texttt{\textcolor{red}{one}} are the real numbers $0$ and $1$, respectively, we just created the number $\mathbf{1}$ in our system. The second argument of the function \texttt{\textcolor{red}{infreal}} is just to give how many digits we want to have before the real part of our number (the number with a hat). Its input and output will look like as follows:
	\begin{lstlisting}[language=Python, numbers=none]
	>>> zero = real(0)
	>>> one = real(1)
	>>> One = infreal(lambda n: one if n==0 else zero, 0)
	>>> One
			^1, 0, 0, 0, 0, 0, 0, ...\end{lstlisting}
	Furthermore, we will also be able to know what is its $n^{\textnormal{th}}$ digit for any $n\in\mathbb{N}$. See the code below:
	\begin{lstlisting}[language=Python, numbers=none]
		>>> One
				^1, 0, 0, 0, 0, 0, 0, ...
		>>> One[0]
				1
		>>> One[-56]
				0
		>>> One[2454]
				0\end{lstlisting}
\end{example}

\begin{example}
	Similar to \Cref{ex:6.4}, the numbers $\mathbf{\epsilon}$ and $\mathbf{\omega}$ could also be defined in our system.
	\begin{lstlisting}[language=Python, numbers=none]
		>>> Epsilon = infreal(lambda n: one if n == 1 else zero, 0)
		>>> Epsilon
				^0, 1, 0, 0, 0, 0, 0, ...
		>>> Omega = infreal(lambda n: one if n == 0 else zero, 1)
		>>> Omega
				1, ^0, 0, 0, 0, 0, 0, 0, ... \end{lstlisting}
	Also, we will be able to have exotic numbers such as $\mathbf{\me}+\mathbf{2\me\epsilon}+\mathbf{3\me\epsilon}^2+\mathbf{4\me\epsilon}^3+\dots$ and its code will be as follows:
	\begin{lstlisting}[language=Python, numbers=none]
		>>> e = exp(rational(1,1))
		>>> Funny = infreal(lambda n: real(rational(n + 1, 1)) * e if n > -1 else zero, 0)
		>>> Funny
				^2.71828, 5.43656, 8.15485, 10.8731, 13.5914, 16.3097, 19.028, ...
		>>> Funny[43532]
				118335
		>>> Funny[-12964]
				0\end{lstlisting}
\end{example}

\Cref{thm:addIsComputable}-\ref{thm:mulIsComputable} show that addition, subtraction, and multiplication in $\Rzlc$ are computable.

\begin{theorem}
	\label{thm:addIsComputable}
	Suppose that we have $x,y\in\Rzlc$. Then the function $\widehat{+}_c$ defined by
	\begin{eqnarray*}
		\widehat{+}_c: & \Rzlc & \rightarrow\Rzlc\\
		& (\mathbf{x},\mathbf{y})& \mapsto \mathbf{x}\widehat{+}\mathbf{y}
	\end{eqnarray*}
	is computable.
\end{theorem}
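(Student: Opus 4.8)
The plan is to exhibit an explicit algorithm which, given (programs for) the representing functions $f_\mathbf{x}$ and $f_\mathbf{y}$ of $\mathbf{x},\mathbf{y}\in\Rzlc$ as in Definition~\ref{def:computablenumberRZ<}, outputs (a program for) a representing function $g$ of $\mathbf{x}\widehat{+}\mathbf{y}$; since this algorithm is assembled only from primitive index manipulation and the addition on $\R_c$ (which we are assuming is already implemented), it witnesses that $\widehat{+}_c$ is computable. The guiding observation is that, by the definition of addition in $\Rzl$ (Definition~\ref{def:AdditionAndMultiplicationInR^<Z}), the coefficient of $\mathbf{\epsilon}^k$ in $\mathbf{x}\widehat{+}\mathbf{y}$ is just the sum of the coefficients of $\mathbf{\epsilon}^k$ in $\mathbf{x}$ and in $\mathbf{y}$ (here $k$ ranges over $\mathbb{Z}$, with $k<0$ corresponding to powers of $\mathbf{\omega}$); so the only genuine work is to \emph{align} the two stored sequences so that coefficients of equal powers of $\mathbf{\epsilon}$ get added together, and then to call the assumed computable addition of computable reals coordinatewise.

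Concretely, I would proceed as follows. Let $l_\mathbf{x}=f_\mathbf{x}(0,0)$ and $l_\mathbf{y}=f_\mathbf{y}(0,0)$ be the indices carrying the standard parts of $\mathbf{x}$ and $\mathbf{y}$, so that in $\mathbf{x}$ the coefficient of $\mathbf{\epsilon}^k$ sits at index $l_\mathbf{x}+k$ when that index is $\geq 1$, and is absent (hence $0$) otherwise; similarly for $\mathbf{y}$. Put $l:=\max\{l_\mathbf{x},l_\mathbf{y}\}$ and declare this to be the index of the standard part of the output, i.e.\ $g(0,0):=l$. For each index $p\geq 1$ set $k:=p-l$ and define $g(p,\cdot):=a\oplus b$, where $\oplus$ denotes the computable addition on $\R_c$, $a$ is $f_\mathbf{x}(l_\mathbf{x}+k,\cdot)$ if $l_\mathbf{x}+k\geq 1$ and the computable real $0$ otherwise, and $b$ is $f_\mathbf{y}(l_\mathbf{y}+k,\cdot)$ if $l_\mathbf{y}+k\geq 1$ and $0$ otherwise. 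This is a short routine in the style of Listing~\ref{mycode} taking two \texttt{infreal} objects and returning a third.

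It then remains to check two things, both routine. First, \emph{correctness}: by construction the sequence $\langle g(1,\cdot),g(2,\cdot),\dots,\widehat{g(l,\cdot)},\dots\rangle$ carries, at the position of $\mathbf{\epsilon}^k$, exactly (coefficient of $\mathbf{\epsilon}^k$ in $\mathbf{x}$) $+$ (coefficient of $\mathbf{\epsilon}^k$ in $\mathbf{y}$); and since the infinity part of $\mathbf{x}\widehat{+}\mathbf{y}$ reaches no higher than $\mathbf{\omega}^{\max\{l_\mathbf{x},l_\mathbf{y}\}-1}$, the choice $l=\max\{l_\mathbf{x},l_\mathbf{y}\}$ leaves enough room to the left of the hat so that no term is truncated. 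Hence this sequence is precisely $\mathbf{x}\widehat{+}\mathbf{y}$. Second, \emph{computability}: $g$ is obtained from $f_\mathbf{x}$, $f_\mathbf{y}$, the $\max$ function, integer comparisons, and $\oplus$ by composition, so $g$ is a computable function of its arguments, and each $g(p,\cdot)$ is a sum of two computable reals (or of a computable real and $0$), hence itself a computable real; thus $g$ satisfies Definition~\ref{def:computablenumberRZ<}, so $\mathbf{x}\widehat{+}\mathbf{y}\in\Rzlc$ with $g$ as witness, and the map producing $g$ from $f_\mathbf{x},f_\mathbf{y}$ is the required algorithm.

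The main obstacle is, honestly, just bookkeeping: getting the index shifts right so that equal powers of $\mathbf{\epsilon}$ are matched across the two operands (including the edge case where one operand has a strictly longer infinity part than the other), and convincing oneself that $l=\max\{l_\mathbf{x},l_\mathbf{y}\}$ is a safe hat index. Everything genuinely computability-theoretic is inherited for free --- we lean on the assumed implementation of $\R_c$ together with its addition, and on the trivial fact that a finite composition of computable operations is computable. One should also note in passing that the witness $g$ we output need not be canonical (representations in $\Rzlc$ are never unique, and cancellation among the $\mathbf{\omega}$-coefficients may leave leading zeros), but producing \emph{some} representation of $\mathbf{x}\widehat{+}\mathbf{y}$ from representations of $\mathbf{x}$ and $\mathbf{y}$ is exactly what computability of $\widehat{+}_c$ demands.
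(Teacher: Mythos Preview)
Your proposal is correct and is essentially the same argument as the paper's: the paper simply exhibits a short Python \texttt{\_\_add\_\_} method that sets the new hat index to $\max\{l_\mathbf{x},l_\mathbf{y}\}$ and returns, at each position $n$, the sum of the two suitably index-shifted digit functions --- exactly the alignment-then-coordinatewise-addition you describe. Your write-up is more thorough (you spell out the index arithmetic, the zero-padding for out-of-range indices, and the correctness/computability verification), but the underlying algorithm is identical.
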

\begin{proof}
	The following code shows that the function $\widehat{+}_c$ defined above is computable.
	\begin{lstlisting}[language=Python, numbers=none]
		def __add__(self, other):
			k = max(self.k, other.k)
			return infreal(lambda n: self.digits(n - (k-self.k)) + other.digits(n - (k-other.k)), k)\end{lstlisting}
\end{proof}

\begin{example}
	Suppose that we want to add $\mathbf{\epsilon}$, $\mathbf{\omega}$, and $\mathbf{1}$. Then we will have:
	\begin{lstlisting}[language=Python, numbers=none]
	>>> Omega
	1, ^0, 0, 0, 0, 0, 0, 0, ...
	>>>	Epsilon
	^0, 1, 0, 0, 0, 0, 0, ...
	>>> Epsilon + Omega + One
	1, ^1, 1, 0, 0, 0, 0, 0, ...\end{lstlisting}
\end{example}

\begin{theorem}
	\label{thm:minusIsComputable}
	Suppose that we have $x,y\in\Rzlc$. Then the function $\widehat{-}_c$ defined by
	\begin{eqnarray*}
		\widehat{-}_c: & \Rzlc & \rightarrow\Rzlc\\
		& (\mathbf{x},\mathbf{y})& \mapsto \mathbf{x}\widehat{-}\mathbf{y}
	\end{eqnarray*}
	is computable.
\end{theorem}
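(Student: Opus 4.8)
The plan is to follow exactly the same strategy as in the proof of \Cref{thm:addIsComputable}: exhibit an explicit algorithm (a short Python program) that, given the data defining $\mathbf{x},\mathbf{y}\in\Rzlc$ --- namely the indices $l_{\mathbf{x}}=f_{\mathbf{x}}(0,0)$ and $l_{\mathbf{y}}=f_{\mathbf{y}}(0,0)$ of their standard parts together with the computable functions producing their digit sequences of computable reals --- returns the data defining $\mathbf{x}\widehat{-}\mathbf{y}$. Since membership in $\Rzlc$ only requires a computable function outputting computable reals (\Cref{def:computablenumberRZ<}), and $\R_c$ is assumed to be already implemented with computable arithmetic, the whole task reduces to bookkeeping on the indices together with one computable operation on $\R_c$.

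The first step is to reduce subtraction to operations already available. The cleanest route is to observe that $\mathbf{x}\widehat{-}\mathbf{y}=\mathbf{x}\widehat{+}(-\mathbf{y})$, where negation of a member of $\Rzlc$ is computed by the trivial program that keeps the index $l$ fixed and replaces each digit $a_i$ by $-a_i$, a computable operation on $\R_c$; composing this with the addition program of \Cref{thm:addIsComputable} immediately yields a program for $\widehat{-}_c$. Alternatively, one can write the direct analogue of the addition code: set $k=\max(l_{\mathbf{x}},l_{\mathbf{y}})$, shift both digit functions by $n\mapsto n-(k-l)$ so that their standard parts sit at index $k$, and return the sequence whose $n$-th digit is the shifted $\mathbf{x}$-digit minus the shifted $\mathbf{y}$-digit, with standard-part index $k$.

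The only point that needs a word of justification is that the alignment shift is itself computable and that the resulting digit function still outputs computable reals --- but the shift is just integer subtraction, each output digit is a difference of two computable reals (hence computable), and the output index $k$ is obtained from $l_{\mathbf{x}},l_{\mathbf{y}}$ by the computable function $\max$, so the produced object genuinely satisfies \Cref{def:computablenumberRZ<}.

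I do not expect a real obstacle here: once the representation is fixed, the statement is essentially a routine verification, and the ``hard part'', such as it is, amounts only to getting the index-offset bookkeeping right so that the standard parts of $\mathbf{x}$ and $\mathbf{y}$ are aligned before subtracting --- exactly the subtlety already handled in \Cref{thm:addIsComputable}, which is why reducing to addition-plus-negation is the more economical presentation.
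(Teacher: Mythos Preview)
Your proposal is correct and matches the paper's own proof essentially line for line: the paper defines \texttt{\_\_neg\_\_} by keeping the index $k$ and negating each digit, then implements \texttt{\_\_sub\_\_} as \texttt{self + (-other)}, which is precisely your ``negation then reuse \Cref{thm:addIsComputable}'' route. Your alternative direct-subtraction variant is a harmless extra, but the paper does not bother with it.
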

\begin{proof}
	The following code shows that the function $\widehat{-}_c$ defined above is computable.
	\begin{lstlisting}[language=Python,escapeinside={(*}{*)}]
	def __neg__(self): return infreal(lambda n: -self[n], self.k) (*\label{line8}*)
 	def __sub__(self, other): return (self + (-other))\end{lstlisting}
 	The definition in line \ref{line8} shows that the additive inverse function is computable.
\end{proof}

\begin{example}
	Suppose that we want to add $\mathbf{\epsilon}\widehat{-}\mathbf{\omega}$ to $\mathbf{1}$. Then we will have:
	\begin{lstlisting}[language=Python, numbers=none]
		>>> Epsilon - Omega + One
				-1, ^1, 1, 0, 0, 0, 0, 0, ...\end{lstlisting}
\end{example}

\begin{theorem}
	\label{thm:mulIsComputable}
	Suppose that we have $x,y\in\Rzlc$. Then the function $\widehat{\times}_c$ defined by
	\begin{eqnarray*}
		\widehat{\times}_c: & \Rzlc & \rightarrow\Rzlc\\
		& (\mathbf{x},\mathbf{y})& \mapsto \mathbf{x}\widehat{\times}\mathbf{y}
	\end{eqnarray*}
	is computable.
\end{theorem}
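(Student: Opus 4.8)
The plan is to argue exactly as in the proofs of Theorems~\ref{thm:addIsComputable} and~\ref{thm:minusIsComputable}: exhibit a short program implementing $\widehat{\times}_c$ and invoke the observation made earlier in this section that a program this short needs no separate correctness argument. The only real reasoning required is to see that the multiplication rule of Definition~\ref{def:AdditionAndMultiplicationInR^<Z} is computable \emph{in kind} --- that every digit of the output is obtained from the two inputs by finitely many operations on $\R_c$.

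First I would recall the data layout of a member of $\Rzlc$ from Definition~\ref{def:computablenumberRZ<}: an index $k_x$ marking the position of the standard part, together with a computable function $\texttt{digits}$ for which $\texttt{digits}(i)$ is the coefficient of $\epsilon^{\,i-k_x}$, and $\texttt{digits}(i)=0$ for $i<0$ since members of $\Rzl$ have the semi-infinite form. Writing $\mathbf{x}=\sum_{i\ge -k_x}a_i\epsilon^i$ and $\mathbf{y}=\sum_{j\ge -k_y}b_j\epsilon^j$, the product $\sum_k c_k\epsilon^k$ with $c_k=\sum_{i+j=k}a_ib_j$ has its leftmost term at $\epsilon^{-(k_x+k_y)}$ (exactly so when the leading coefficients are nonzero; otherwise the stored result merely carries a leading zero, which is harmless), so it should be stored with index $k=k_x+k_y$; and, re-indexed by stored positions, the entry at position $n$ of the product is the \emph{finite} sum $\sum_{i=0}^{n}\texttt{self.digits}(i)\cdot\texttt{other.digits}(n-i)$. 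This gives the method

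\begin{lstlisting}[language=Python, numbers=none]
def __mul__(self, other):
    k = self.k + other.k
    return infreal(lambda n: sum([self.digits(i) * other.digits(n - i) for i in range(n + 1)], real(0)), k)
\end{lstlisting}

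\noindent whose correctness is immediate from Definition~\ref{def:AdditionAndMultiplicationInR^<Z}. Computability then follows from three facts already available: addition and multiplication of computable reals are computable (we assume a working implementation of $\R_c$); a finite sum of products of computable reals is again a computable real; and the recipe for the $n$-th digit is uniform in $n$ (it is literally a \texttt{lambda} in $n$), hence defines a single computable function, as Definition~\ref{def:computablenumberRZ<} requires.

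The load-bearing point, which I would flag explicitly, is the finiteness of each $c_k$: it is precisely the semi-infinite design of $\Rzl$ --- the very reason $\Rzl$ was preferred over $\Rz$, cf.\ the discussion following Example~\ref{ex:ConvertNumberToR^Z} --- that bounds the index set $\{(i,j):i+j=k\}$ from below, so that $c_k$ has at most $k+k_x+k_y+1$ summands rather than being an infinite series. In $\Rz$ this step collapses and multiplication is not even well-defined, let alone computable. I would close with a one-line remark, in the spirit of the remark about $\R_d$ and the comments after Theorem~\ref{thm:addIsComputable}, stressing that showing the \emph{operator} $\widehat{\times}_c$ computable is strictly stronger than merely knowing that $\Rzlc$ is closed under multiplication.
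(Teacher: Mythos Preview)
Your proposal is correct and takes essentially the same approach as the paper: both exhibit a Python \texttt{\_\_mul\_\_} method that sets $k=\texttt{self.k}+\texttt{other.k}$ and computes the $n$-th stored digit as the finite convolution $\sum_{i=0}^{n}\texttt{self.digits}(i)\cdot\texttt{other.digits}(n-i)$, with the paper using \texttt{reduce} and an explicit $n<0$ guard where you use \texttt{sum} with a \texttt{real(0)} start value. Your surrounding commentary --- tying the formula back to Definition~\ref{def:AdditionAndMultiplicationInR^<Z}, isolating the finiteness of each $c_k$ as the load-bearing point, and noting that this is exactly what the semi-infinite design of $\Rzl$ buys --- goes beyond what the paper supplies and is all to the good.
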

\begin{proof}
	The following code shows that the function $\widehat{\times}_c$ defined above is computable.
	\begin{lstlisting}[language=Python,escapeinside={(*}{*)}, numbers=none]
	def __mul__(self, other):
		k = self.k + other.k
		def digits(n):
			if n < 0:
				return zero
			else:
				return reduce((lambda x,y:x+y), [self.digits(i) * other.digits(n - i) for i in range(n + 1)])
		return infreal(digits, k)  \end{lstlisting}
\end{proof}

\begin{example}
	Suppose that we want to add $\mathbf{\epsilon}\widehat{\times}\mathbf{\omega}$ to $\mathbf{1}$ and also $-\mathbf{\epsilon}^2$ to $\mathbf{1}$. Then we will have:
	\begin{lstlisting}[language=Python, numbers=none]
	>>> Epsilon * Omega + One
			0, ^2, 0, 0, 0, 0, 0, 0, ...
	>>> Epsilon * -Epsilon + One
			^1, 0, -1, 0, 0, 0, 0, ...\end{lstlisting}
\end{example}

\subsection{Some Remarks on Non-Computability in $\Rzl$}

\begin{remark}
	Even though division on $\R_c$ is computable (assuming the input does not equal $0$), the same can not be said of $\Rzlc$.
\end{remark}

\begin{remark}
	Suppose that we have a number $\mathbf{x}\in\Rzl$. Then without any further information, the process of finding $\mathbf{x}^{-1}$ (the multiplicative inverse of $\mathbf{x}$) is not computable. One extra information needed to make it computable in $\Rzl$ is how many digits we want to have in $\mathbf{x}^{-1}$, which of course will affect the accuracy of our result. More precisely, it is known that it is not possible to give an algorithm that, a given number $a \in \R_c$, decides whether $a = 0$ or $\lnot a =0$. So let $a \in \R_c$ and consider $\mathbf{z} = a + \mathbf{\epsilon}$. We have $ \mathbf{\epsilon} \neq 0$. If $a = 0$ then $\mathbf{z}^{-1} = \mathbf{\omega}$. If $a \neq 0$ then $z^{-1} < \mathbf{\omega}$. Thus by checking whether the $\omega$-part of $z^{-1}$ is less than $1$ or greater than $0$, we would be able to decide whether $a = 0$ or $\lnot a =0$.
\end{remark}

\begin{remark}
Similarly surprising, we can show that the absolute value function, which is computable for $\R_c$, is not computable for $\Rzlc$. Here the absolute value function is the function 
\[ \left| \mathbf{z} \right| = \begin{cases}
	\mathbf{z} & \text{if } \mathbf{z} \geq 0 \\ -\mathbf{z} & \text{if } \mathbf{z} < 0 
\end{cases} \]
Similar to the above, take $a \in \R_c$ and consider $z = |a| - \mathbf{\epsilon}$. If $a =0$ then $|z| =- |a| + \mathbf{\epsilon}$. If $a \neq 0$ then $|z| = |a| - \mathbf{\epsilon}$. Thus by checking the $\epsilon$-part of $|z|$, we would be able to decide whether $a = 0$ or $\lnot a =0$.
\end{remark}

This also leads to comparison between numbers not being computable. Now this is also the case in $\R_c$. However, for numbers $x,y \in \R_c $ such that $x \neq y$, we can decide whether $x<y$ or $x > y$. This does not extend to $\Rzlc$: 

\begin{remark}
Comparison among the members in $\Rzlc$ is not computable. 
Again, let $a \in \R_c$ and consider $\mathbf{x} = |a| + \mathbf{\epsilon}$ and $\mathbf{y} = \textbf{2}|a|$. If $a = 0$ then $\mathbf{x} > \mathbf{y}$, and if $a \neq 0$ then $\mathbf{x} <\mathbf{y}$. Thus, once again we would be able to decide whether $a = 0$ or $\lnot a =0$.
\end{remark}

\section{Conclusion and Suggestions for Further Research}
\label{sec:Conclusion}

In this article we treated the nonstandard real numbers in the spirit of the `Chunk and Permeate' approach. The sets $\R$ and $\mathbb{^*R}$ were thrown together (that means: combining their languages and axioms) and arising inconsistency issues were dealt with using paraconsistent reasoning strategy. In this case --- and this is one of the interesting novelties --- the procedure was made explicit by introducing new sets $\mathbb{\widehat{R}}$ and in turn $\Rzl$, the separate `chunks'. In a sense, we transferred the `Chunk and Permeate' approach from the theoretical level to the (explicit) model level. Whereas the impact on infinitesimal mathematics was only sketched in \cite{brown2004chunk}, it was worked out in detail here, using the sets $\mathbb{\widehat{R}}$ and $\Rzl$. After introducing the theoretical background, we constructed the new model of nonstandard analysis in detail. The remaining part of this paper lies in an extensive discussion of topological, applied (in the sense of calculus), and computability issues of the obtained model. A side result of the constructed set $\Rzl$ was a direct consistency proof of the Grossone theory, see \cite{lolli2015metamathematical}.

On the \textit{topological} aspect in \Cref{sec:TopologyOnRZ<}, we introduced some new notions of metrics, balls, open sets, and etc in $\Rzl$ together with their properties. On the \textit{applied} aspect in \Cref{sec:CalculusOnRZ<}, some new concepts on the calculus in $\Rzl$ were discussed, e.g. derivative (we successfully developed a permeability relation such that the derivative function in $\Rzl$ can be permeated to $\mathbb{R}$), continuity, and convergence. For the two last issues, some new notions were introduced in this article. First of all, we discussed the three possible notions of continuity that can be applied to either $\mathbb{R}$ or $\Rzl$. We also determined how they relate to each other in their respective model. While doing that, we discovered a new kind of fractals --- infinitesimal fractals. After analysing three possible notions of continuity, we decided that the best notion that can be used in our setting $\Rzl$ is the $\epsilon$-$\delta$ definition and by doing that, we do not only preserve much of the spirit of classical analysis but also retain the intuition of infinitesimals. After establishing our position, we introduced a more detailed notion of continuity which is called ($k,n$)-continuity (as can be seen in Definition \ref{def:k-ncontinuity}). We explored how this new notion of continuity behaves, e.g. what happens with the composition of two continuous functions and also how this notion behaves under multiplication. It is worth pointing out here that this new notion of continuity is a much more fine-grained notion than the classical continuity. Last but not least, we showed that the set $\Rzl$ has nice \textit{computability} features. We succeeded in building a program, in Python, to show that we can have a computable number $\Rzl$. The set of all these computable numbers is denoted by $\Rzlc$.  We also showed some interesting remarks regarding this computability issue.

In term of further research, we indicated some possible areas of further development as follows. \textit{First}, one could try to do infinitesimal analysis using the relevant  logic \textbf{R}. The comparison between the results (perhaps) gotten in $\textbf{R}$ and the one described here might be interesting, especially in term of usefulness and simplicity. \textit{Secondly}, regarding the `transfer principle', our intuition says that it is equivalent to the notion of permeability in the Cchunk \& Permeate strategy. One could try to formally prove it, or disprove it. \textit{Thirdly}, in term of computability issue, using the calculus on $\Rzl$, one could try to formulate the necessary and sufficient conditions for the derivatives of functions, for example, on a computer to exist. And perhaps, showing also how to find these derivatives whenever they exist. This, of course, can also be applied to the other notions. \textit{Fourthly}, as been said in the previous sections, some results described in this article could help us to gain a better understanding in another area of research (the two that were mentioned in \Cref{sec:CalculusOnRZ<} are reverse mathematics and quantum physics). One could try to work out the details on this.

In general, with the new consistent sets created in this work, new opportunities awaits mathematicians. One of the joys of mathematics is to explore a world which has no physical substance, and yet is everywhere in every aspect of our lives. Infinities and infinitesimals offer ways to explore hitherto unseen aspects of our world and our universe, by giving us the vision to see the greatest and smallest aspects of life. Even a na\"ive set, when it demonstrates harmony, offer another dimension of even clearer precision. In a wide sense, the work on this article can also be seen as a contribution to bridge (the antipodes) constructive analysis and nonstandard analysis. This problem has been extensively (and intensively) discussed in the past few years (see for example \cite{sanders2013connection,sanders2015effective,sanders2017nonstandard,bournez2018cheap,sanders2018gandy,normann2019computability}).

\renewcommand{\abstractname}{Acknowledgements}
\begin{abstract}
	The first author received financial support from Indonesia Endowment Fund for Education that enables the research of this article. We would also like to thank Professor Elem\'er Rosinger and Dr.~Josef Berger for their invaluable inputs.
\end{abstract}

% Authors must disclose all relationships or interests that 
% could have direct or potential influence or impart bias on 
% the work: 
%
% \section*{Conflict of interest}
%
% The authors declare that they have no conflict of interest.

%\nocite{*}
% BibTeX users please use one of
%\bibliographystyle{spbasic}      % basic style, author-year citations
\bibliographystyle{spmpsci}      % mathematics and physical sciences
\bibliography{references.bib}   % name your BibTeX data base

\end{document}